\newtheorem*{theo}{Theorem}
\newtheorem*{conj}{Conjecture}
\newtheorem{qu}{Question}
\newtheorem{theorem}{Theorem}[section]
\newtheorem{definition}[theorem]{Definition}
\newtheorem{lemma}[theorem]{Lemma}
\newtheorem{cor}[theorem]{Corollary}
\newtheorem{prop}[theorem]{Proposition}
\newtheorem*{remark}{Remark}
\theoremstyle{definition}
\newtheorem{exmp}[theorem]{Example}
\let\phi=\varphi
\newcommand{\integ}[4]{\int\limits_{#1}^{#2}#3\, d#4}
\newcommand{\abb}[3]{#1\colon #2\rightarrow #3}
\newcommand{\real}[1]{\mathbb{R}^{#1}}
\newcommand{\rem}[1]{}
\DeclareFontFamily{U}{mathb}{\hyphenchar\font45}
\DeclareFontShape{U}{mathb}{m}{n}{
<-6> mathb5 <6-7> mathb6 <7-8> mathb7
<8-9> mathb8 <9-10> mathb9
<10-12> mathb10 <12-> mathb12
}{}
\DeclareSymbolFont{mathb}{U}{mathb}{m}{n}
\DeclareMathSymbol{\llcurly}{\mathrel}{mathb}{"CE}
\DeclareMathSymbol{\ggcurly}{\mathrel}{mathb}{"CF}
\begin{document}

\title{Lorentzian distance functions in contact geometry}
\author{Jakob Hedicke}
\address{Ruhr-Universit\"at Bochum\\ Fakult\"at f\"ur Mathematik\\ Universit\"atsstra\ss e 150\\ 44801 Bochum, Germany}
\email{Jakob.Hedicke@ruhr-uni-bochum.de}
\subjclass{53D10, 53D35, 53C75}
\keywords{Contact geometry, Contactomorphisms, Lorentzian distance functions}
\date{\today}

\begin{abstract}
An important tool to analyse the causal structure of a Lorentzian manifold is given by the Lorentzian distance function.
We define a class of Lorentzian distance functions on the group of contactomorphisms of a closed contact manifold depending on the choice of a contact form.
These distance functions are continuous with respect to the Hofer norm for contactomorphisms defined by Shelukhin (\cite{Egor}) and finite if and only if the group of contactomorphisms is orderable.
To prove this we show that intervals defined by the positivity relation are open with respect to the topology induced by the Hofer norm.
For orderable Legendrian isotopy classes we show that the Chekanov-type metric defined in \cite{Rosen} is non-degenerate.
In this case similar results hold for a Lorentzian distance functions on Legendrian isotopy classes.
This leads to a natural class of metrics associated to a globally hyperbolic Lorentzian manifold such that its Cauchy hypersurface has a unit co-tangent bundle with orderable isotopy class of the fibres.
\end{abstract}
\maketitle

\section{Introduction}
Consider a closed co-oriented contact manifold $(M,\xi)$, i.e. an $(2n+1)$-dimen\-sional smooth manifold $M$ with a hyperplane distribution $\xi\subset TM$ that is the kernel of a $1$-form $\alpha$ such that $\alpha\wedge d\alpha^n$ is nowhere vanishing.
Denote by $\mathrm{Cont}_0(M)$ the identity component of the group of contactomorphisms, i.e. the group of diffeomorphisms preserving $\xi$ that are isotopic to $id_M$ through contactomorphisms.
Fore details see e.g. \cite{Geiges}.

In \cite{Eliashberg} Eliashberg and Polterovich introduced the concept of positivity on $\mathrm{Cont}_0(M)$ (see also \cite{Bhupal}).
An isotopy $\phi_t$ of contactomorphisms is called \textit{positive} if the contact vector field 
$$X_t^{\phi}\circ \phi_t:= \frac{d}{ds}|_{s=t}\phi_s$$
satisfies $\alpha(X_t^{\phi})>0$ for all $t$.
Note that this definition only depends on the co-orientation defined by the contact form $\alpha$.
Similarly $\phi_t$ is called \textit{non-negative} if $\alpha(X_t^{\phi})\geq 0$.
This induces two relations on $\mathrm{Cont}_0(M)$ by
$$\phi \llcurly \psi :\Leftrightarrow \text{ there exists a positive isotopy from $\phi$ to $\psi$}$$
and
$$\phi \preccurlyeq \psi :\Leftrightarrow \text{ there exists a non-negative isotopy from $\phi$ to $\psi$}.$$
The relations $\llcurly$ and $\preccurlyeq$ turn $\mathrm{Cont}_0(M)$ into a causal space (see \cite{Kunzinger}).

The properties of the relations on $\mathrm{Cont}_0(M)$ resemble properties of the chronological and causal relation in Lorentzian geometry.
Moreover as we will show, one can define an analogue of the Lorentzian distance function on the group $\mathrm{Cont}_0(M)$.

Let $(N,g)$ be a smooth time-oriented Lorentzian manifold, that is a manifold $N$ with a smooth pseudo-Riemannian metric of signature $(1,n)$ and the choice of a vector field $X$ such that $g(X,X)<0$.
A tangent vector $v\in TN$ is called \textit{future pointing timelike} if $g(v,v)<0$ and $g(v,X)< 0$ and future pointing causal if $g(v,v)\leq 0$ and $g(v,X)< 0$.
A smooth curve $\gamma(t)$ is called future pointing timelike (causal) if $\gamma'(t)$ is future pointing timelike (causal) for all $t$.
This induces two relations on $N$.
The chronological relation 
$$p \ll q:\Leftrightarrow \text{ there exists a future pointing timelike curve from $p$ to $q$}$$
and the causal relation
$$p \leq q:\Leftrightarrow \text{ there exists a future pointing causal curve from $p$ to $q$}.$$
Most causal properties of $(N,g)$ can be phrased in terms of the chronological (causal) future/past of the points in $N$:
\begin{align*}
I^+(p):=\{q\in N|p\ll q\}&, I^-(p):=\{q\in N|q\ll p\}\\
J^+(p):=\{q\in N|p\leq q\}&, J^-(p):=\{q\in N|q\leq p\}.
\end{align*}

A natural topology on $N$ related to $g$ is the interval (Alexandrov) topology whose basis is given by the open 'intervals' $I^+(p)\cap I^-(q)$ of the chronological relation.
The interval topology coincides with the manifold topology of $N$ iff the Lorentzian manifold is \textit{strongly causal}, i.e. iff for every open $U\subset N$ there exists a causally convex open $V\subset U$ (\cite{Minguzzi2}).

Analogously to the length of a curve $\gamma$ in a Riemannian manifold, one can define a \textit{Lorentzian length (eigentime)} for causal curves by 
$$L_g(\gamma):= \integ{a}{b}{\sqrt{-g(\gamma'(t),\gamma'(t))}}{t}.$$
The \textit{Lorentzian distance function} between two points is then defined by
\begin{align*}
&\abb{\tau_g}{N\times N}{[0,\infty]}\\
(p,q)&\mapsto \left\lbrace \begin{array}{cc}
\sup\{L_g(\gamma)\}, &\text{if } p\leq q \\ 
0, & \text{otherwise}
\end{array} \right..
\end{align*}
Here the supremum is taken over all future pointing causal curves connecting $p$ and $q$.

The Lorentzian distance function satisfies (see \cite{Beem})
\begin{itemize}
\item[(i)] $\tau_g(p,q)>0$ if and only if $p\ll q$.
\item[(ii)] $\tau_g$ is lower semi-continuous.
\item[(iii)] $\tau_g(p,q)\geq \tau_g(p,r)+\tau_g(r,q)$ for $p\leq r \leq q$.
\end{itemize}
If $(N,g)$ is strongly causal the metric $g$ can be recovered from $\tau_g$ and the Lorentzian distance reflects many causal properties of $(N,g)$.

A strongly causal time oriented Lorentzian manifold $(N,g)$ is called \textit{globally hyperbolic} if $J^+(p)\cap J^-(q)$ is compact for all $p,q\in N$.
In this case $N$ contains a smooth Cauchy hypersurface $\Sigma$, i.e. a surface that is intersected in a unique point by every inextendible causal curve.
In particular $N$ is diffeomorphic to $\real{}\times \Sigma$ (\cite{Minguzzi2}).

\begin{theo}[{{\cite[Corollary 4.7.]{Beem}}}]
If $(N,g)$ is globally hyperbolic, then $\tau_g$ is finite and continuous.
\end{theo}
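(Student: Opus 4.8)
The plan is to prove the two assertions separately, using that property~(ii) already provides lower semi-continuity of $\tau_g$, so that ``continuous'' will follow once finiteness and upper semi-continuity are established; the engine in both halves is the compactness of the causal diamonds $J^+(p)\cap J^-(q)$ supplied by global hyperbolicity.

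\emph{Finiteness.} Since $(N,g)$ is globally hyperbolic it carries a smooth Cauchy temporal function $\mathfrak t\colon N\to\R$, i.e.\ a smooth function whose differential $d\mathfrak t$ is everywhere timelike --- hence strictly positive on future-directed causal vectors --- and whose level sets are Cauchy hypersurfaces; this refines the splitting $N\cong\R\times\Sigma$ quoted above. Fix $p\leq q$ and put $K:=J^+(p)\cap J^-(q)$, which is compact. First I would check that
\[
C:=\sup\Bigl\{\,\sqrt{-g(v,v)}\ :\ x\in K,\ v\in T_xN,\ g(v,v)\leq 0,\ v\text{ future directed},\ d\mathfrak t(v)=1\,\Bigr\}
\]
is finite: the indexing set is closed, and it is bounded because a sequence in it escaping to infinity would, after rescaling by an auxiliary Riemannian norm, subconverge over the compact set $K$ to a nonzero causal vector annihilated by $d\mathfrak t$, contradicting timelikeness of $d\mathfrak t$. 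Then for any future pointing causal curve $\gamma$ from $p$ to $q$ one has $\gamma\subset K$ and $d\mathfrak t(\gamma')>0$, so by the substitution $s=\mathfrak t\circ\gamma$,
\[
L_g(\gamma)=\int \sqrt{-g(\gamma',\gamma')}\,dt=\int \frac{\sqrt{-g(\gamma',\gamma')}}{d\mathfrak t(\gamma')}\,d(\mathfrak t\circ\gamma)\ \leq\ C\,\bigl(\mathfrak t(q)-\mathfrak t(p)\bigr),
\]
a bound independent of $\gamma$; hence $\tau_g(p,q)\leq C(\mathfrak t(q)-\mathfrak t(p))<\infty$. (Alternatively one can cover $K$ by finitely many causally convex open sets lying inside convex normal neighborhoods, on each of which $\tau_g$ coincides with the local Lorentzian distance and is therefore bounded, and decompose $\gamma$ accordingly; causal convexity is what lets one merge the pieces hitting a fixed chart into a single subarc of $\gamma$.)

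\emph{Upper semi-continuity.} Here I would argue by contradiction. If upper semi-continuity failed at $(p,q)$ there would be $\e_0>0$ and $(p_n,q_n)\to(p,q)$ with $\tau_g(p_n,q_n)\geq\tau_g(p,q)+\e_0$, so $p_n\ll q_n$ by~(i); choose future pointing causal curves $\gamma_n$ from $p_n$ to $q_n$ with $L_g(\gamma_n)\geq\tau_g(p_n,q_n)-\tfrac1n$, which is possible since each $\tau_g(p_n,q_n)$ is now known to be finite. Picking $p_*\ll p$ and $q\ll q_*$ (points always have nonempty chronological past and future) and using that $I^+(p_*)$ and $I^-(q_*)$ are open, one obtains $p_*\ll p_n$ and $q_n\ll q_*$ for large $n$, whence $\gamma_n\subset J^+(p_*)\cap J^-(q_*)=:K_*$, one fixed compact set. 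As $N$ is strongly causal, the limit curve theorem applied to the $\gamma_n\subset K_*$ with converging endpoints produces a subsequence converging uniformly, after a suitable common parametrization, to a future pointing causal curve $\gamma$ from $p$ to $q$; since the Lorentzian length functional is upper semi-continuous along $C^0$-convergent sequences of causal curves,
\[
\tau_g(p,q)\ \geq\ L_g(\gamma)\ \geq\ \limsup_n L_g(\gamma_n)\ \geq\ \tau_g(p,q)+\e_0,
\]
a contradiction. Combined with~(ii) this yields continuity of $\tau_g$.

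The step I expect to be the main obstacle is the upper semi-continuity argument: it requires confining the approximating curves $\gamma_n$ to a single compact set (secured by openness of the $I^{\pm}$ together with the elementary nonemptiness of chronological pasts and futures), invoking the limit curve theorem in a form that delivers a limit curve with the prescribed endpoints (where strong causality is needed to prevent imprisonment), and quoting the correct upper semi-continuity of $L_g$ for $C^0$-limits of causal curves. By comparison the finiteness estimate above is routine once the compactness of $K$ and the temporal function are in hand.
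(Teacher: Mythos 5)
The paper does not prove this statement---it is quoted verbatim from \cite[Corollary 4.7]{Beem}---so there is no internal proof to compare against; your argument is essentially the standard one from that reference. Both halves are correct: finiteness via compactness of $J^+(p)\cap J^-(q)$ (your temporal-function bound is a clean variant, using the Bernal--S\'anchez splitting already quoted in the introduction, in place of Beem--Ehrlich--Easley's covering by convex normal neighbourhoods), and upper semi-continuity via confinement of almost-maximizing curves in a fixed compact diamond $K_*$, the limit curve theorem, and upper semi-continuity of $L_g$ under $C^0$-limits. The only technical caveat worth recording is that the limit curve is a priori only Lipschitz causal while $\tau_g$ as defined here ranges over smooth causal curves; this is closed by the usual approximation of causal curves by smooth ones losing at most $\e$ of length.
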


\begin{theo}[{{\cite[Theorem 4.30.]{Beem}}}]
A strongly causal manifold $(N,g)$ is globally hyperbolic if and only if $\tau_{\tilde{g}}$ is finite for all $\tilde{g}$ in the conformal class of $g$, i.e. for all $\tilde{g}$ with $\tilde{g}=e^fg$  for some smooth function $f$. 
\end{theo}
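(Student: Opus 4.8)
\emph{The forward implication} needs essentially nothing beyond what is already quoted: whether a curve is causal or timelike is unaffected by a conformal change $g\mapsto e^{f}g$, so the causal and chronological futures/pasts, the strong causality condition, and the condition ``every diamond $J^{+}(p)\cap J^{-}(q)$ is compact'' all depend only on the conformal class of $g$. Hence if $(N,g)$ is globally hyperbolic then so is $(N,e^{f}g)$ for every smooth $f$, and the previous theorem, \cite[Corollary 4.7]{Beem}, applies to give that $\tau_{e^{f}g}$ is finite.

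\emph{For the converse I would argue by contraposition.} Suppose $(N,g)$ is strongly causal but not globally hyperbolic; the goal is to produce a smooth $f\colon N\to\R$ and points $p',q'$ with $\tau_{e^{f}g}(p',q')=+\infty$. By hypothesis some diamond $D:=J^{+}(p)\cap J^{-}(q)$ is non-compact. The first step is to note that $D$ cannot even be relatively compact: if $\overline{D}$ were compact, one would choose $x\in\overline{D}\setminus D$ and future causal curves $\lambda_{k}$ from $p$ to $q$ through points $r_{k}\in D$ with $r_{k}\to x$, parametrized by arclength of a fixed complete auxiliary Riemannian metric $h$; since each $\lambda_{k}$ lies in $D\subset\overline{D}$, the limit curve theorem yields a subsequential limit which (according to whether the $h$-lengths $L_{h}(\lambda_{k})$ stay bounded or not) is either a causal curve from $p$ to $q$ through $x=\lim r_{k}$, forcing the contradiction $x\in D$, or a future-inextendible causal curve confined to the compact set $\overline{D}$, which strong causality forbids. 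Consequently $\overline{D}$ is non-compact, so one can pick $r_{k}\in D$ with no accumulation point in $N$ together with future causal curves $\lambda_{k}$ from $p$ to $q$ through $r_{k}$ (concatenate a causal curve $p\rightsquigarrow r_{k}$ with one $r_{k}\rightsquigarrow q$).

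\emph{The construction is then conformal.} Fix $p'\ll p$ and $q\ll q'$, so that $r_{k}\in J^{+}(p)\cap J^{-}(q)\subset I^{+}(p')\cap I^{-}(q')$ and hence there are future timelike curves $a_{k}$ from $p'$ to $r_{k}$ and $b_{k}$ from $r_{k}$ to $q'$; let $\sigma_{k}:=a_{k}\ast b_{k}$, a future causal curve from $p'$ to $q'$ that is timelike near $r_{k}$. Since $\{r_{k}\}$ is closed and discrete in $N$, choose pairwise disjoint, locally finite coordinate balls $U_{k}\ni r_{k}$, shrunk so that $\sigma_{k}\cap U_{k}$ still contains a timelike subarc of some $g$-length $\ell_{k}>0$, and pick a smooth $f$ supported in $\bigcup_{k}U_{k}$ with $e^{f}\ge (k/\ell_{k})^{2}$ on a neighbourhood of that subarc (local finiteness keeps $f$ smooth). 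Then
\[
L_{e^{f}g}(\sigma_{k})\ \ge\ \int_{\sigma_{k}\cap U_{k}}e^{f/2}\sqrt{-g(\sigma_{k}',\sigma_{k}')}\ \ge\ \frac{k}{\ell_{k}}\cdot \ell_{k}\ =\ k,
\]
so $\tau_{e^{f}g}(p',q')\ge L_{e^{f}g}(\sigma_{k})\ge k$ for every $k$; thus $\tau_{e^{f}g}(p',q')=+\infty$, $\tau_{e^{f}g}$ is not finite, and the contrapositive is proved.

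\emph{The two genuine points} --- everything else being bookkeeping with bump functions --- are, first, the limit-curve and non-imprisonment input that upgrades ``non-compact diamond'' to ``non-relatively-compact diamond'' and thereby makes the escaping set $\{r_{k}\}$ closed and discrete, so that the bumps can be placed in a locally finite family; and second, the observation that a conformal factor only multiplies $g(\sigma',\sigma')$ and therefore does nothing whatsoever on null segments, so it is not enough that the curves $\sigma_{k}$ merely traverse the blow-up regions $U_{k}$ --- they must accumulate a definite amount of $g$-proper time there. This is exactly why one first passes to $p'\ll p$, $q\ll q'$ and routes the $\sigma_{k}$ through the chronological interior $I^{+}(p')\cap I^{-}(q')$ as honestly timelike curves before letting $e^{f}$ amplify that proper time. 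I expect this second point to be the conceptual crux and the limit-curve step to be the most laborious part.
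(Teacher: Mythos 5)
This statement is quoted in the paper as background from \cite[Theorem 4.30]{Beem} and is not proved there, so there is no in-paper argument to compare against. Your proof is correct and reconstructs the standard textbook argument: conformal invariance of causality plus \cite[Corollary 4.7]{Beem} for the forward direction, and for the converse the limit-curve/non-imprisonment step upgrading ``non-compact diamond'' to ``non-relatively-compact diamond'', followed by the push-up to $p'\ll p$, $q\ll q'$ so that the escaping points $r_k$ lie in $I^+(p')\cap I^-(q')$ and the conformal bumps act on genuinely timelike arcs. You correctly identify the two essential points; the remaining details (choosing the $U_k$ pairwise disjoint \emph{and} locally finite, which requires a complete auxiliary Riemannian metric and uniformly bounded radii rather than disjointness alone) are routine as you say.
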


As pointed out for instance in \cite{Kunzinger} Lorentzian distance functions can be used to explore the causal properties of more general causal spaces.
There the authors consider causal spaces and functions satisfying the properties (i)-(iii) (here lower semi-continuity is with respect to some background metric).

For closed $(M,\xi)$ we will define a class of Lorentzian distance functions on $\mathrm{Cont}_0(M)$  for the relations $\llcurly$ and $\preccurlyeq$ inspired by Shelukhins definition of the Hofer norm (\cite{Egor}).
The Lorentzian distance functions depend on the choice of a contact form.
They are not conjugation invariant but continuous with respect to the Hofer-Shelukhin norm and share some of its properties.
To prove this we show that the interval topology on $\mathrm{Cont}_0(M)$ introduced in \cite{Chernov19} is coarser than the topology induced by the Hofer-Shelukhin norm.
The same method that proves continuity of the Lorentzian distance function can be used to answer \cite[Question 18]{Egor} if $\mathrm{Cont}_0(M)$ is orderable in the sense of \cite{Chernov16}.

In the case of certain unit co-tangent bundles we use spectral invariants for Legendrians to give an estimate of the Lorentzian distance function in terms of the Hofer-Shelukhin norm.

Analogously to the relations on $\mathrm{Cont}_0(M)$, positivity defines two relations on the Legendrian isotopy classes of $(M,\xi)$ (see e.g. \cite{Chernov16}, \cite{Colin} for details).
In Section 5 we will define Lorentzian distance functions for isotopy classes of closed Legendrians.
For orderable Legendrian isotopy classes we give a simple proof of \cite[Conjecture 1.10]{Rosen} proving that the Chekanov-type metric defined by Rosen and Zhang is non-degenerate.
In this case the same results like for contactomorphisms hold, i.e. the Lorentzian distance functions are finite and continuous with respect to the Rosen-Zhang metric.

The results about the isotopy class of the fibre of unit co-tangent bundles are of particular interest to Lorentzian geometry.
Unlike the Riemannian case on Lorentzian manifolds there is no canonical way to define a metric.
As observed in \cite{Low} a unit co-tangent bundle $ST^{\ast}\Sigma$ is contactomorphic to the space of null geodesics of globally hyperbolic manifolds with Cauchy hypersurface $\Sigma$.
If the isotopy class of the fibre of the unit co-tangent bundle of the Cauchy surface is orderable, the points in the Lorentzian manifold can be identified with Legendrians isotopic to the fibres such that the identification respects the relations on the manifold and the Legendrian isotopy class (see \cite{Chernov16,Chernov19}).
In this case the Lorentzian distance and the Rosen-Zhang metric on the Legendrian isotopy class naturally induce a Lorentzian distance and a metric on the globally hyperbolic manifold.

\textbf{Acknowledgements.}

I am grateful to Alberto Abbondandolo,  Stefan Nemirovski, Daniel Rosen, Stefan Suhr and Kai Zehmisch for many useful discussions and their support.
This research is supported by the SFB/TRR 191 ``Symplectic Structures in Geometry, Algebra and Dynamics'', funded by the Deutsche Forschungsgemeinschaft (Projektnummer 281071066 – TRR 191).

\section{Main Results}
 
Let $(M,\xi)$ be a closed co-oriented contact manifold.
Given a contact form $\alpha$ Shelukhin \cite{Egor} defined a non-conjugation invariant norm on  $\mathrm{Cont}_0(M)$ by 
$$|\phi|_{\alpha}:=\inf\left\lbrace\left.\integ{0}{1}{\max\limits_M|\alpha(X_t^{\phi})|}{t}\right\vert \phi_t \text{ isotopy with } \phi_0=id_M, \phi_1=\phi\right\rbrace.$$
The norm $|\cdot|_{\alpha}$ has the following properties

\begin{theo}[\cite{Egor}]
The norm $|\cdot|_{\alpha}$ satisfies
\begin{itemize}
\item[(i)]$|\phi|_{\alpha}=0\Leftrightarrow \phi=id_M$.
\item[(ii)]$|\phi\psi|_{\alpha}\leq |\phi|_{\alpha}+|\psi|_{\alpha}$.
\item[(iii)]$|\phi^{-1}|_{\alpha}=|\phi|_{\alpha}$.
\item[(iv)]$|\psi\phi\psi^{-1}|_{\alpha}=|\phi|_{\psi^{\ast}\alpha}$.
\end{itemize}
\end{theo}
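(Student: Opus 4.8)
Write $h^{\phi}_t:=\alpha(X^{\phi}_t)\in C^{\infty}(M)$ for the contact Hamiltonian of an isotopy $\phi_t$, so that $|\phi|_{\alpha}=\inf\int_0^1\max_M|h^{\phi}_t|\,dt$, the infimum being over isotopies with $\phi_0=\mathrm{id}_M$ and $\phi_1=\phi$. The plan is to deduce the algebraic properties (ii)--(iv) by tracking how the generating contact vector field of an isotopy transforms under right translation, time reversal and conjugation, and to prove the non-degeneracy (i) by an energy--capacity argument.

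\emph{Properties (ii)--(iv).} First I would record three elementary facts. (a) For fixed $\psi$, the isotopy $t\mapsto\phi_t\circ\psi$ is generated by the same vector field $X^{\phi}_t$ as $\phi_t$, hence has the same contact Hamiltonian; right translation thus preserves the length of an isotopy. (b) The reversed isotopy $t\mapsto\phi_{1-t}\circ\phi^{-1}$ runs from $\mathrm{id}_M$ to $\phi^{-1}$ and is generated by $-X^{\phi}_{1-t}$, with contact Hamiltonian $-h^{\phi}_{1-t}$; its length equals that of $\phi_t$. (c) The conjugated isotopy $t\mapsto\psi\circ\phi_t\circ\psi^{-1}$ runs from $\mathrm{id}_M$ to $\psi\phi\psi^{-1}$ and is generated by $\psi_*X^{\phi}_t$, and a one-line computation gives $\alpha(\psi_*X^{\phi}_t)=\big((\psi^{\ast}\alpha)(X^{\phi}_t)\big)\circ\psi^{-1}$, so $\max_M|\alpha(\psi_*X^{\phi}_t)|=\max_M|(\psi^{\ast}\alpha)(X^{\phi}_t)|$. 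Granting these, property (iii) follows from (b), since time reversal is a length-preserving bijection between the isotopies $\mathrm{id}_M\to\phi$ and the isotopies $\mathrm{id}_M\to\phi^{-1}$; property (iv) follows from (c), since conjugation by $\psi$ is a bijection from isotopies $\mathrm{id}_M\to\phi$ onto isotopies $\mathrm{id}_M\to\psi\phi\psi^{-1}$ carrying the $\psi^{\ast}\alpha$-length to the $\alpha$-length; and property (ii) follows by concatenation: reparametrising each isotopy in time leaves $\int_0^1\max_M|h_t|\,dt$ invariant by the substitution rule, so one may assume a near-minimising isotopy $\psi_t$ for $\psi$ and a near-minimising isotopy $\phi_t$ for $\phi$ are constant near their endpoints; then the concatenation of $\psi_t$ with $t\mapsto\phi_t\circ\psi$ is a smooth isotopy from $\mathrm{id}_M$ to $\phi\psi$ whose length is within a prescribed error of $|\psi|_{\alpha}+|\phi|_{\alpha}$, and letting the error tend to $0$ gives (ii).

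\emph{Property (i).} The implication $\phi=\mathrm{id}_M\Rightarrow|\phi|_{\alpha}=0$ is witnessed by the constant isotopy. The content is the converse, which I expect to be the main obstacle: $\max_M|h^{\phi}_t|$ controls only the Reeb component of $X^{\phi}_t$, whereas its $\xi$-component involves $dh^{\phi}_t$, so an isotopy of arbitrarily small length can in principle displace points a long way. I would prove non-degeneracy by a contact energy--capacity inequality, mirroring Hofer's argument in the Hamiltonian case. Assume $\phi\neq\mathrm{id}_M$ and pick $p$ with $\phi(p)\neq p$; in a Darboux chart choose a ball $U\ni p$, small also in the Reeb direction, with $\phi(U)\cap U=\varnothing$. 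Setting $e_{\alpha}(U):=\inf\{|\theta|_{\alpha}:\theta(U)\cap U=\varnothing\}$ one has $|\phi|_{\alpha}\ge e_{\alpha}(U)$, so it suffices to show $e_{\alpha}(U)>0$. This I would reduce, by a localisation argument, to the analogous statement for a model contact ball, and establish there by exhibiting a positive contact capacity bounded above by the displacement energy --- for instance via generating functions in the $1$-jet model, or via a symplectic homology capacity on the symplectization $M\times\mathbb{R}_{>0}$ with form $d(r\alpha)$, where $\phi_t$ lifts to the Hamiltonian isotopy generated by $r\,h^{\phi}_t$ and one can import the Hamiltonian energy--capacity inequality on a relatively compact piece. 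Making the localisation precise and controlling the boundary and Reeb-direction effects in the symplectization is the step I expect to require the most care.
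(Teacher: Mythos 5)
The paper does not prove this theorem at all: it is quoted verbatim from Shelukhin's article \cite{Egor} as background, so there is no ``paper's own proof'' to compare against. Judged on its own terms, your sketch is essentially the standard argument and is the one Shelukhin himself gives. The three elementary facts (a)--(c) are correct: right translation leaves the generating vector field unchanged, the reversed right-translated path $\phi_{1-t}\circ\phi^{-1}$ has Hamiltonian $-h^{\phi}_{1-t}$, and the identity $\alpha(\psi_{*}X^{\phi}_t)=\bigl((\psi^{*}\alpha)(X^{\phi}_t)\bigr)\circ\psi^{-1}$ does yield (iv) once you observe (as you do) that conjugation is a length-converting bijection between the relevant spaces of isotopies; the reparametrisation-plus-concatenation argument for (ii) is fine. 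The only substantive mathematics is in (i), and you have correctly located it: the passage from ``$|\phi|_{\alpha}$ small'' to ``$\phi$ cannot displace a fixed small set'' is exactly where Shelukhin works, by lifting to the symplectization $(M\times\R_{>0},d(r\alpha))$, restricting to a band $U\times[a,b]$, and invoking a Hamiltonian energy--capacity inequality. The genuinely delicate points there --- the lift $rh^{\phi}_t$ is not compactly supported, the conformal factor moves the $r$-coordinate, and one must cut off without destroying the displacement property or inflating the Hofer energy by more than a bounded factor --- are flagged but not carried out in your sketch; as written, (i) is a correct plan rather than a proof. Since this is a cited result, that level of detail is appropriate here, but be aware that (i) is not reducible to the formal manipulations that suffice for (ii)--(iv).
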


\begin{remark}
In the following we will work with the metric $d_{\alpha}(\phi,\psi):=|\psi^{-1}\phi|_{\alpha}$.
Note that the metrics induced by two different contact forms are equivalent (\cite{Egor}).
Therefore they induce the same topology on $\mathrm{Cont}_0(M)$.
\end{remark}

For a contactomorphism $\phi\in \mathrm{Cont}_0(M)$ define the sets $I^+(\phi), I^-(\phi), J^+(\phi)$ and $J^-(\phi)$ with respect to the relations $\llcurly$ and $\preccurlyeq$ analogously to the Lorentzian case.
A natural topology on $\mathrm{Cont}_0(M)$ introduced in \cite{Chernov19} is then given by the interval topology, i.e. the topology induced by sets of the form $I^+(\phi)\cap I^-(\psi)$.

\begin{prop}\label{propinterval}
The interval topology is coarser than the topology induced by $d_{\alpha}$, in particular the sets $I^{\pm}(\phi)$ are open with respect to $d_{\alpha}$.
\end{prop}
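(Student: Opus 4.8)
The plan is to show that for every $\phi$ there is a $d_\alpha$-ball around $\phi$ contained in $I^+(\phi)$ (and similarly in $I^-(\phi)$); this immediately gives openness of $I^\pm(\phi)$, and then the interval topology, being generated by such sets, is coarser than the $d_\alpha$-topology. By composing with $\phi^{-1}$ and using conjugation-type manipulations it suffices to treat the case $\phi=\mathrm{id}_M$: I want to find $\e>0$ such that $d_\alpha(\mathrm{id}_M,\psi)<\e$ implies $\mathrm{id}_M\llcurly\psi$, i.e.\ there is a positive isotopy from $\mathrm{id}_M$ to $\psi$.

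The key mechanism is a comparison between an arbitrary small-energy isotopy and the Reeb flow. Let $R$ denote the Reeb vector field of $\alpha$ and $\rho_s$ its flow; $\rho_s$ is generated by the contact Hamiltonian $\equiv 1$, so it is a positive isotopy, and for small $|s|$ it stays close to $\mathrm{id}_M$ in $d_\alpha$ (indeed $d_\alpha(\mathrm{id}_M,\rho_s)\le |s|$). Now suppose $\psi_t$ is an isotopy from $\mathrm{id}_M$ to $\psi$ with $\int_0^1\max_M|\alpha(X_t^\psi)|\,dt<\e$. Consider the concatenation $s\mapsto \rho_{\e}$, i.e.\ reparametrize so that we first run $\psi_t$ and simultaneously add a large multiple of the Reeb flow: concretely look at the isotopy $t\mapsto \rho_{c t}\circ\psi_t$ for a suitable constant $c$. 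Its generating contact vector field has contact Hamiltonian of the form $\alpha(X_t^\psi)$ plus a strictly positive contribution coming from the Reeb part; because the Reeb contribution can be bounded below by a fixed positive constant (using compactness of $M$ and smooth dependence on $t$) while $\alpha(X_t^\psi)$ is uniformly close to $0$ once $\e$ is small enough, the total contact Hamiltonian is everywhere positive. Hence $t\mapsto\rho_{ct}\circ\psi_t$ is a \emph{positive} isotopy from $\mathrm{id}_M$ to $\rho_c\circ\psi$. Similarly $s\mapsto \rho_{c-s}\circ\psi$ for $s\in[0,c']$ — or rather the backwards Reeb flow applied after — lets one cancel the extra $\rho_c$: one checks that $\rho_c\circ\psi\llcurly$-related to $\psi$ fails in general, so instead one should arrange the cancellation \emph{before} reaching $\psi$. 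The cleaner route: show $\mathrm{id}_M\llcurly \rho_{-\e'}$ is false but $\rho_{-\e'}\llcurly\mathrm{id}_M$, and run the Reeb flow on both ends, proving $\rho_{-\e'}\llcurly\psi\llcurly\rho_{\e'}$; combined with $\mathrm{id}_M\preccurlyeq$... — this needs care, and is exactly the delicate point.

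Let me restate the core estimate, which is where the real content lies. Given the small-energy isotopy $\psi_t$ generated by $X_t$ with $\max_M|\alpha(X_t)|=:f(t)$ and $\int_0^1 f(t)\,dt<\e$, I want a positive isotopy from $\mathrm{id}_M$ to $\psi$. The idea is to precompose each $\psi_t$ with a time-dependent Reeb flow: set $\Psi_t:=\psi_t\circ\rho_{g(t)}$ for a smooth increasing function $g$ with $g(0)=g(1)=0$ (so the endpoints are right). The contact Hamiltonian of $\Psi_t$ is $h_t=\alpha(X_t^\psi)\circ(\text{something}) + g'(t)\cdot(\text{pullback of the Reeb Hamiltonian }1)$; the pulled-back Reeb Hamiltonian stays bounded below by a positive constant $\kappa$ depending only on $\psi_t$ on a compact time interval, so $h_t\ge -f(t)+\kappa\, g'(t)$. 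One cannot make $g$ both vanish at the endpoints and have $g'$ large throughout, so instead allow $\Psi$ to end at $\psi\circ\rho_{g(1)}$ with $g(1)=:\delta$ small and positive — this is a positive isotopy provided $\kappa g'(t)>f(t)$ pointwise, achievable with $\int_0^1 g'=\delta$ as soon as $\e<\kappa\delta$ in an $L^1$ sense (choose $g'$ proportional to $f$ plus a constant). Thus $\mathrm{id}_M\llcurly \psi\circ\rho_\delta$ for all small $\psi$. Finally, since $\mathrm{id}_M\llcurly\rho_\delta$ (the Reeb flow is positive) and the relation $\llcurly$ is invariant under right translation, $\psi\llcurly \psi\circ\rho_\delta$; chaining gives only $\mathrm{id}_M\llcurly\psi\circ\rho_\delta$ and $\psi\llcurly\psi\circ\rho_\delta$, not $\mathrm{id}_M\llcurly\psi$. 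To close the gap one runs the same argument with $\rho_{-\delta}$ on the \emph{other} side: a symmetric construction yields $\psi\circ\rho_{-\delta}\llcurly\psi$, and combined with a positive isotopy $\mathrm{id}_M\to\psi\circ\rho_{-\delta}$ (again from smallness of $\psi$, now shifting by $-\delta$ at the \emph{end} and $0$ at the start is \emph{not} positive — instead shift by $+\delta$ at the start, $0$ at the end). The bookkeeping of which end carries the Reeb shift, and ensuring all pieces are genuinely positive (strict inequality) rather than merely non-negative, is the main obstacle; everything else is the uniform lower bound $\kappa>0$ from compactness of $M$ and continuity in $t$, and the elementary fact that $L^1$-small $f$ can be dominated by $\kappa g'$ with $g$ of small total variation.
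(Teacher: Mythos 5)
There is a genuine gap, and it begins earlier than the point you flag as ``delicate'': it is in the reduction itself. Openness of $I^+(\phi)$ means that every $\psi\in I^+(\phi)$ has a $d_{\alpha}$-ball around it contained in $I^+(\phi)$; it does \emph{not} mean that $\phi$ has such a ball, and the statement you reduce to --- that $d_{\alpha}(id_M,\psi)<\varepsilon$ implies $id_M\llcurly\psi$ --- is false whenever $\mathrm{Cont}_0(M)$ is orderable. Indeed, the backward Reeb flow $\phi^{\alpha}_{-\delta}$ satisfies $d_{\alpha}(id_M,\phi^{\alpha}_{-\delta})\le\delta$ and $\phi^{\alpha}_{-\delta}\llcurly id_M$, so $id_M\llcurly\phi^{\alpha}_{-\delta}$ would produce a positive loop. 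Your own computation detects the obstruction: with $g(0)=g(1)=0$ the Hamiltonian $-f(t)+\kappa g'(t)$ integrates to $-\int_0^1 f<0$ and hence cannot be everywhere positive, and no rearrangement of Reeb shifts at the endpoints can repair a claim that is simply untrue. That is why your chaining only ever yields $id_M\llcurly\psi\circ\rho_{\delta}$ and never $id_M\llcurly\psi$.

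The missing idea is that the positivity margin must come from the point $\psi\in I^+(\phi)$ being perturbed, not from an auxiliary Reeb flow. Fix a positive path $\phi_t$ from $\phi$ to $\psi$ and let $\varepsilon>0$ be a lower bound for $\min_M\alpha(X_t^{\phi})$. Your time-dependent Reeb composition (which is precisely Lemma~\ref{lemimportant} of the paper, where the Reeb factor is composed on the strict-contactomorphism side so that the Hamiltonians simply add) should then be applied to the \emph{small} path: given $\tilde{\psi}$ with $d_{\alpha_0}(\psi,\tilde{\psi})<\varepsilon$ for a suitably rescaled contact form $\alpha_0$ (the rescaling controls the conformal factors of $\phi_t\circ\psi^{-1}$), one replaces a connecting path of $L^1$-norm less than $\varepsilon$ by one whose contact Hamiltonian is pointwise greater than $-\varepsilon$ in $t$, and composes it with $\phi_t\circ\psi^{-1}$. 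The margin $\varepsilon$ of the pre-existing positive path absorbs the possibly negative, but bounded below by $-\varepsilon$, contribution of the perturbation, and the resulting path from $\phi$ to $\tilde{\psi}$ is positive. So your key estimate is the correct tool, but it must be aimed at the perturbation of $\psi$, with an already existing positive path supplying the slack.
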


\begin{definition}\label{def1}
Given a contact form $\alpha$ on a closed contact manifold $(M,\xi)$ define $\abb{\tau_{\alpha}}{\mathrm{Cont}_0(M)\times \mathrm{Cont}_0(M)}{[0,\infty]}$ by 
\begin{align*}
&\tau_{\alpha}(\phi,\psi):=\left\lbrace \begin{array}{cc}
\sup\left\lbrace\integ{0}{1}{\min\limits_M\alpha(X_t^{\phi})}{t}\right\rbrace, &\text{if } \phi\preccurlyeq \psi \\ 
0, & \text{otherwise}
\end{array} \right..
\end{align*}
Here the supremum is taken over all non-negative paths $\phi_t$ with $\phi_0=\phi$ and $\phi_1=\psi$.
\end{definition}

Like the metric $d_{\alpha}$ the function $\tau_{\alpha}$ fails to be left invariant.
In \cite{Burago} and \cite{Fraser} it is shown that any conjugation invariant norm on $\mathrm{Cont}_0(M)$ is discrete in the sense that any contactomorphism that is not the identity has norm greater than some positive constant.
It is a natural question to ask weather a similar result holds for Lorentzian distance functions.

\begin{conj}\label{con1}
Let $\abb{\tau}{\mathrm{Cont}_0(M)\times\mathrm{Cont}_0(M)}{[0,\infty]}$ be a map satisfying $\tau(\phi,\psi)>0$ iff $\phi\llcurly \psi$ and $\tau(\phi_1,\phi_{2})\geq \tau(\phi_1,\psi)+\tau(\psi,\phi_2)$ for $\phi_1\preccurlyeq\psi\preccurlyeq\phi_2$.
Assume that $\tau$ is lower semi-continuous with respect to the interval topology and bi-invariant.
Then 
\begin{align*}
&\tau(\phi,\psi)=\left\lbrace \begin{array}{cc}
\infty, &\text{if } \phi\llcurly \psi \\ 
0, & \text{otherwise}
\end{array} \right..
\end{align*}
\end{conj}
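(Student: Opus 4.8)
The plan is to combine the Burago–Ivanov–Polterovich discreteness result quoted above with the additivity (reverse triangle) inequality along causal chains to force $\tau$ to jump from $0$ to $\infty$ with no finite values in between. First I would note that conjugation invariance of $\tau$ is the crucial hypothesis: it means $\tau(\phi,\psi) = \tau(\chi\phi\chi^{-1},\chi\psi\chi^{-1})$ for all $\chi\in\mathrm{Cont}_0(M)$, and in particular, taking $\chi$ so that $\chi\phi\chi^{-1}=\mathrm{id}_M$ when $\phi=\mathrm{id}_M$, the function $\phi\mapsto \tau(\mathrm{id}_M,\phi)$ is a conjugation-invariant ``radius'' on $\mathrm{Cont}_0(M)$. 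I would then argue that by left-translating (which is permissible after conjugating appropriately, using that $\llcurly$ and $\preccurlyeq$ are themselves conjugation invariant) it suffices to understand $\tau(\mathrm{id}_M,\psi)$ for $\mathrm{id}_M\preccurlyeq\psi$.

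The key step is the following dichotomy. Suppose $\phi\llcurly\psi$ but $0<\tau(\phi,\psi)<\infty$. Using a positive isotopy from $\phi$ to $\psi$, I would produce, for every $N$, intermediate contactomorphisms $\phi=\chi_0\llcurly\chi_1\llcurly\dots\llcurly\chi_N=\psi$ obtained by reparametrising and cutting the positive path into $N$ pieces; each consecutive pair satisfies $\chi_{i-1}\llcurly\chi_i$, hence $\tau(\chi_{i-1},\chi_i)>0$ by hypothesis (i). By the reverse triangle inequality (iii), $\tau(\phi,\psi)\geq\sum_{i=1}^N\tau(\chi_{i-1},\chi_i)$. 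If I could choose the pieces so that each $\tau(\chi_{i-1},\chi_i)$ is bounded below by a constant independent of $N$, the sum would diverge, contradicting finiteness. The mechanism to get such a uniform lower bound is conjugation invariance plus discreteness: a conjugation-invariant lower-semicontinuous function that is positive on $\llcurly$ cannot take arbitrarily small positive values, because — by the Burago–Ivanov–Polterovich phenomenon — the conjugacy classes accumulate at $\mathrm{id}_M$, so lower semicontinuity at the identity together with $\tau(\mathrm{id}_M,\mathrm{id}_M)$ being forced to be $0$ would otherwise be violated along a sequence $\psi_k\to\mathrm{id}_M$ with $\mathrm{id}_M\llcurly\psi_k$. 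More precisely, I would show: there is $c>0$ such that $\phi\llcurly\psi\Rightarrow\tau(\phi,\psi)\geq c$. Indeed if not, pick $\phi_k\llcurly\psi_k$ with $\tau(\phi_k,\psi_k)\to 0$; conjugate so that $\phi_k=\mathrm{id}_M$; the elements $\psi_k$ can be arranged (again cutting positive isotopies into many small conjugate pieces, as in the BIP argument) to have a subsequence converging in the interval topology to $\mathrm{id}_M$ while still $\mathrm{id}_M\llcurly\psi_k$; then lower semicontinuity gives $0=\tau(\mathrm{id}_M,\mathrm{id}_M)\geq \limsup\tau(\mathrm{id}_M,\psi_k)$, which is consistent, so the contradiction must instead be extracted from positivity being an \emph{open} condition preserved under the limit — here I would invoke Proposition \ref{propinterval} to see that $I^+(\phi)$ is open, so the limiting element still lies in some $I^+$, keeping $\tau>0$ in the limit and contradicting $\tau(\phi_k,\psi_k)\to0$ after renormalising by the number of pieces.

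Once the uniform bound $c>0$ is established, the telescoping argument above shows $\tau(\phi,\psi)\geq Nc$ for every $N$ whenever $\phi\llcurly\psi$, hence $\tau(\phi,\psi)=\infty$; and if not $\phi\llcurly\psi$ then $\tau(\phi,\psi)=0$ by hypothesis (i). This is exactly the claimed formula. I expect the main obstacle to be the middle step: making rigorous the claim that a positive isotopy can be subdivided into $N$ pieces each of which is ``uniformly chronological'' in a conjugation-invariant sense — this is where the Burago–Ivanov–Polterovich technique of displacing supports and the openness of the chronological intervals (Proposition \ref{propinterval}) must be combined carefully, and where lower semicontinuity with respect to the interval topology (rather than the finer $d_\alpha$-topology) is genuinely needed. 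The delicate point is that conjugation can be used to move the ``small'' generators of a positive loop into a single Darboux chart and then fragment them, so that each fragment is conjugate to a fixed small positive contactomorphism whose $\tau$-value from the identity is a fixed positive number; summing $N$ copies then yields divergence.
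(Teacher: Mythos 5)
First, a point of orientation: the statement you are proving is labelled a \emph{Conjecture} in the paper and is not proved there; the author only establishes a Legendrian analogue (Proposition \ref{propnonex}), under the stronger hypothesis of invariance under the full $\mathrm{Cont}_0(M)$-action and without any semicontinuity assumption. That proof works by cutting off a contact Hamiltonian so as to produce a contactomorphism $\psi$ fixing one Legendrian while moving a second onto a third, whence $\tau(L,L_0)=\tau(\psi L,\psi L_0)=\tau(L,L_1)\geq\tau(L,L_0)+\tau(L_0,L_1)$ forces $\tau(L,L_0)=\infty$. That trick has no analogue for elements of $\mathrm{Cont}_0(M)$ itself: two distinct points $\chi_{i-1}\llcurly\chi_i$ on a positive path are in general not conjugate, and one cannot ``fix the identity while moving $\chi_{i-1}$ to $\chi_i$'' by an inner automorphism. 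This is exactly why the group case remains open, and it is exactly the step your proposal needs.

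The concrete gap is your uniform lower bound: the claim that there exists $c>0$ with $\phi\llcurly\psi\Rightarrow\tau(\phi,\psi)\geq c$. Without it, the telescoping inequality $\tau(\phi,\psi)\geq\sum_{i=1}^N\tau(\chi_{i-1},\chi_i)$ gives no contradiction, since a sum of positive terms can stay bounded (the pieces of a finer and finer subdivision will in general have smaller and smaller $\tau$-values). Your argument for the bound does not close: you reduce to $\tau(\mathrm{id}_M,\psi_k)$ by ``left-translating after conjugating,'' but $\tau$ is only assumed conjugation invariant, not left or right invariant, so this reduction is unjustified; and, as you yourself note mid-argument, lower semicontinuity applied to a sequence with $\tau(\mathrm{id}_M,\psi_k)\to 0$ is ``consistent'' rather than contradictory --- it bounds $\tau$ at the limit from \emph{above} by $\liminf\tau(\mathrm{id}_M,\psi_k)=0$, which only yields a contradiction if you can produce a limit point $\psi_\infty$ in the (possibly non-Hausdorff) interval topology with $\mathrm{id}_M\llcurly\psi_\infty$, and you have not constructed such a sequence. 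Finally, the Burago--Ivanov--Polterovich discreteness theorem is a statement about conjugation-invariant \emph{norms} (symmetric, subadditive, vanishing only at the identity); $\tau$ satisfies a reverse triangle inequality only along causal chains and is not a norm, so the BIP fragmentation-and-displacement machinery does not transfer --- fragmenting a positive path destroys the causal chain along which the reverse triangle inequality must be applied. As it stands the proposal does not prove the conjecture, and the statement should be treated as open.
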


\begin{remark}
The reverse triangle inequality immediately implies that there are no finite discrete Lorentzian distance functions since such a function has to be strictly increasing along positive paths of contactomorphisms.
\end{remark}

In section \ref{sleg} we will prove a version of this conjecture for Legendrian isotopy classes.

\begin{theorem}\label{thm1}
The map $\tau_\alpha$ satisfies
\begin{itemize}
\item[(i)]$\tau_{\alpha}(\phi,\psi)>0\Leftrightarrow \phi \llcurly \psi$.
\item[(ii)]$\tau_{\alpha}$ is continuous with respect to the interval topology and the topology induced by $d_{\alpha}$.
\item[(iii)]$\tau_{\alpha}(\phi_1,\phi_{2})\geq \tau_{\alpha}(\phi_1,\psi)+\tau_{\alpha}(\psi,\phi_2)$ for $\phi_1\preccurlyeq\psi\preccurlyeq\phi_2$.
\item[(iv)]$\tau_{\alpha}(\psi\phi_1,\psi\phi_2)=\tau_{\psi^{\ast}\alpha}(\phi_1,\phi_2)$.
\end{itemize}
\end{theorem}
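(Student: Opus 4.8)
The plan is to prove the four items in the order (iv), (iii), (i), (ii), the common engine being the following. Let $R_s$ denote the flow of the Reeb vector field $R_{\alpha}$ of $\alpha$; it is complete since $M$ is closed, and it is a \emph{strict} contactomorphism, $R_s^{\ast}\alpha=\alpha$. Hence, for a smooth reparametrisation $c(t)$, composing an isotopy $\rho_t$ with $R_{c(t)}$ changes the contact vector field only by adding $c'(t)R_{\alpha}$ and by a pushforward under $R_{c(t)}$, and the latter does not change $\min_M\alpha(\cdot)$; this is what lets one "insert" or "remove" Reeb time in a controlled way. For (iv), writing $\eta_t=\psi\circ\rho_t$ with $\rho_t=\psi^{-1}\eta_t$ a path from $\phi_1$ to $\phi_2$, a direct computation gives $X_t^{\eta}=\psi_{\ast}X_t^{\rho}$, hence $\alpha(X_t^{\eta})\circ\psi=(\psi^{\ast}\alpha)(X_t^{\rho})$; thus $\eta_t$ is non‑negative for $\alpha$ iff $\rho_t$ is non‑negative for $\psi^{\ast}\alpha$, $\min_M\alpha(X_t^{\eta})=\min_M(\psi^{\ast}\alpha)(X_t^{\rho})$, and $\psi^{\ast}\alpha$ induces the same relations, so $\eta_t\leftrightarrow\rho_t$ matches the two suprema term by term; left invariance of $\preccurlyeq$ handles the ``otherwise'' case. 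For (iii), reparametrising competitors to be constant near $t=0,1$ leaves $\int_0^1\min_M\alpha(X_t)\,dt$ unchanged, so a non‑negative path $\phi_1\to\psi$ and one $\psi\to\phi_2$ concatenate smoothly with additive functional, and taking suprema over the two factors independently gives (iii).

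For (i): if $\phi\llcurly\psi$, a positive isotopy $\rho_t$ has $\min_M\alpha(X_t^{\rho})\ge c>0$ by compactness of $[0,1]$, so $\tau_{\alpha}(\phi,\psi)\ge c>0$. Conversely, if $\tau_{\alpha}(\phi,\psi)>0$ there is a non‑negative path $\rho_t$ from $\phi$ to $\psi$ with $\delta:=\int_0^1 m(t)\,dt>0$, where $m(t):=\min_M\alpha(X_t^{\rho})\ge 0$; put $M(t):=\int_0^t m$ and $\tilde\rho_t:=R_{\delta t-M(t)}\circ\rho_t$. Then $\tilde\rho_0=\phi$, $\tilde\rho_1=\psi$ (since $\delta\cdot 1-M(1)=0$), and using $R_s^{\ast}\alpha=\alpha$ one gets $\alpha(X_t^{\tilde\rho})\ge(\delta-m(t))+m(t)=\delta>0$ for all $t$, so $\tilde\rho_t$ is a positive isotopy and $\phi\llcurly\psi$. (If one worries about smoothness of $t\mapsto M(t)$, replace $m$ by a smooth $\hat m\le m$ with $\int\hat m>0$.)

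For (ii), since the interval topology is coarser than the $d_{\alpha}$‑topology (Proposition \ref{propinterval}), it suffices to prove continuity in the interval topology. \emph{Lower semicontinuity}: if $\tau_{\alpha}(\phi_0,\psi_0)>a$, by (i) and the construction just given we may take a positive isotopy $\rho_t$ from $\phi_0$ to $\psi_0$ with $\int_0^1 m(t)\,dt>a$; for small $\varepsilon,\eta>0$ set $q:=\rho_{\varepsilon}$, $r:=\rho_{1-\varepsilon}$. Then $\phi_0\in I^{+}(R_{-\eta}\phi_0)\cap I^{-}(q)$ and $\psi_0\in I^{+}(r)\cap I^{-}(R_{\eta}\psi_0)$ are interval‑open neighbourhoods, and for $(\phi,\psi)$ in their product, concatenating (after the usual reparametrisation) a positive path $\phi\to q$, the subpath $\rho|_{[\varepsilon,1-\varepsilon]}$, and a positive path $r\to\psi$ gives $\tau_{\alpha}(\phi,\psi)\ge\int_{\varepsilon}^{1-\varepsilon}m(t)\,dt>a$.

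\emph{Upper semicontinuity}: introduce $\hat\tau(\phi,\psi):=\inf\{\tau_{\alpha}(p,s)\mid p\llcurly\phi,\ \psi\llcurly s\}$. Using $R_{-\eta}\phi_0\llcurly\phi_0\llcurly R_{\eta}\phi_0$ one sees, just as in the neighbourhood construction above, that $\hat\tau$ is upper semicontinuous for the interval topology; and two applications of (iii) give $\hat\tau\ge\tau_{\alpha}$. The reverse inequality uses the Reeb device of (i) run backwards, to \emph{remove} Reeb time where a competing isotopy has slack: if $g_t$ joins $R_{-\eta}\phi$ to $R_{\eta}\psi$ non‑negatively and $E:=\int_0^1\min_M\alpha(X_t^{g})\,dt\ge 2\eta$, then $R_{c(t)}\circ g_t$ with $c(0)=\eta$, $c(1)=-\eta$, $c'(t)=-2\eta\min_M\alpha(X_t^{g})/E$ is a non‑negative path from $\phi$ to $\psi$ with functional $E-2\eta$ (the case $E<2\eta$ is trivial), whence
\[
\tau_{\alpha}(R_{-\eta}\phi,R_{\eta}\psi)\le\tau_{\alpha}(\phi,\psi)+2\eta\qquad\text{for all }\eta>0,
\]
so $\hat\tau(\phi,\psi)\le\inf_{\eta>0}\tau_{\alpha}(R_{-\eta}\phi,R_{\eta}\psi)\le\tau_{\alpha}(\phi,\psi)$. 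Therefore $\hat\tau=\tau_{\alpha}$, so $\tau_{\alpha}$ is upper semicontinuous, and with lower semicontinuity this yields (ii). I expect the genuine obstacle to be precisely this upper semicontinuity: in the Lorentzian model it is where global hyperbolicity is needed, and here the point is that the exact identity $\tau_{\alpha}(R_{-\eta}\phi,R_{\eta}\psi)=\tau_{\alpha}(\phi,\psi)+2\eta$ substitutes for the missing compactness argument, crucially because the Reeb flow is a strict contactomorphism.
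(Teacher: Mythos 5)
Your proof is correct and follows essentially the same route as the paper: your Reeb-reparametrisation device $R_{c(t)}\circ\rho_t$ with $c'(t)=\epsilon-\min_M\alpha(X_t^{\rho})$ is exactly the paper's Lemma \ref{lemimportant}, and your continuity argument for (ii) squeezes $\tau_{\alpha}(\phi,\psi)$ between the values at the Reeb-shifted pairs $(\phi^{\alpha}_{\mp\eta}\phi,\phi^{\alpha}_{\pm\eta}\psi)$, which is precisely the mechanism of the paper's Claims 1 and 2. The only (harmless) organisational differences are that you prove (iii) by direct concatenation of reparametrised paths rather than via the pointwise product $\psi_{\tau_1(t)}\tilde\psi_{\tau_2(t)}$, and that you package upper semicontinuity through the auxiliary function $\hat\tau$ and the explicit inequality $\tau_{\alpha}(R_{-\eta}\phi,R_{\eta}\psi)\le\tau_{\alpha}(\phi,\psi)+2\eta$.
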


\begin{qu}
Theorem \ref{thm1} shows that ${(\mathrm{Cont}_0(M),d_{\alpha},\tau_{\alpha})}$ is a Lorentzian pre-length space in the sense of \cite{Kunzinger}.
Are there contact manifolds such that \newline ${(\mathrm{Cont}_0(M),d_{\alpha},\tau_{\alpha})}$ has the structure of a Lorentzian length space?
\end{qu}

To answer the question one would need to show the local existence of non-negative paths maximizing the integral in Definition \ref{def1}, i.e. the existence of paths connecting two $d_{\alpha}$-close contactomorphisms $\phi \preccurlyeq\psi$ so that their 'Lorentzian length' coincides with $\tau_{\alpha}(\phi,\psi)$.

Call $\mathrm{Cont}_0(M)$ \textit{orderable} if $\preccurlyeq$ defines a partial order on $\mathrm{Cont}_0(M)$, i.e. if there are no non-negative loops.
By \cite[Proposition 2.1.B]{Eliashberg} this is equivalent to the non existence of a positive loop connecting $id_M$ to itself.
Note that the proof in \cite{Eliashberg} is given for contractible non-negative loops but works analogously for non contractible loops.

The following Corollary of Lemma \ref{lemimportant} answers \cite[Question 18]{Egor} if $\mathrm{Cont}_0(M)$ is orderable.

\begin{cor}\label{thmq18}
Suppose that $(M,\xi)$ is closed and $\mathrm{Cont}_0(M)$ orderable.
Denote by $\phi_t^{\alpha}$ the Reeb flow of a contact form $\alpha$.
Then $|\phi_t^{\alpha}|_{\alpha}=|t|$, in particular
$$\sup\limits_{\phi\in\mathrm{Cont}_0(M)}|\phi|_{\alpha}=\infty.$$
\end{cor}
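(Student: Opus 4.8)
The plan is to prove the two bounds $|\phi_t^{\alpha}|_{\alpha}\le|t|$ and $|\phi_t^{\alpha}|_{\alpha}\ge|t|$ separately, the first being elementary and the second resting on orderability through a positive‑loop construction. For the upper bound, the isotopy $s\mapsto\phi_{st}^{\alpha}$, $s\in[0,1]$, connects $id_M$ to $\phi_t^{\alpha}$ and is generated by the vector field $tR_{\alpha}$, where $R_{\alpha}$ is the Reeb field (so $\partial_\tau\phi_\tau^{\alpha}=R_\alpha\circ\phi_\tau^{\alpha}$, $\alpha(R_\alpha)\equiv1$ and $(\phi_\tau^{\alpha})^{\ast}\alpha=\alpha$); since $\alpha(R_\alpha)\equiv 1$ this gives $\int_0^1\max\limits_M|\alpha(tR_{\alpha})|\,ds=|t|$, hence $|\phi_t^{\alpha}|_{\alpha}\le|t|$. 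As $(\phi_t^{\alpha})^{-1}=\phi_{-t}^{\alpha}$, property (iii) of the norm $|\cdot|_{\alpha}$ reduces the reverse inequality to the case $t>0$ (the case $t=0$ being trivial).

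For the lower bound, suppose toward a contradiction that for some $t>0$ there is an isotopy $\psi_s$, $s\in[0,1]$, with $\psi_0=id_M$, $\psi_1=\phi_t^{\alpha}$ and $L:=\int_0^1\max\limits_M|\alpha(X_s^{\psi})|\,ds<t$. Put $h_s:=\alpha(X_s^{\psi})$ and $m(s):=\max\limits_M|h_s|$. I would first form the loop $\Gamma$ based at $id_M$, defined on $[0,2]$, by concatenating the Reeb arc $\Gamma_s:=\phi_{st}^{\alpha}$ on $[0,1]$ with the reversed isotopy $\Gamma_s:=\psi_{2-s}$ on $[1,2]$; after the usual reparametrisation near $s\in\{0,1,2\}$, which changes none of the integrals involved, this is a smooth loop with $\alpha(X_s^{\Gamma})\equiv t$ on $[0,1]$ and $\alpha(X_s^{\Gamma})=-h_{2-s}$ on $[1,2]$. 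I would then compose with a reparametrised Reeb flow, $\Phi_s:=\phi_{f(s)}^{\alpha}\circ\Gamma_s$ for a smooth $f$ with $f(0)=f(2)=0$. Because $(\phi_{\tau}^{\alpha})^{\ast}\alpha=\alpha$, a direct computation gives $\alpha(X_s^{\Phi})=\dot f(s)+\alpha(X_s^{\Gamma})\circ\phi_{-f(s)}^{\alpha}$, hence
$$\min\limits_M\alpha(X_s^{\Phi})=\dot f(s)+\min\limits_M\alpha(X_s^{\Gamma}),$$
so $\Phi$ is a positive isotopy exactly when $\dot f(s)>-\min\limits_M\alpha(X_s^{\Gamma})$ for all $s$: on $[0,1]$ this demands $\dot f>-t$, and on $[1,2]$ it demands $\dot f(s)>\max\limits_M h_{2-s}$, for which $\dot f(s)>m(2-s)$ suffices.

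The hypothesis $L<t$ is precisely the room needed to close $f$ up. Choose a smooth $\tilde m\ge m$ with $\tilde L:=\int_0^1\tilde m<t$ (possible since $m$ is continuous and $\int_0^1 m=L<t$) and a small $\eta>0$ with $\tilde L+\eta<t$. On $[1,2]$ set $\dot f(s):=\tilde m(2-s)+\eta$, so that $\int_1^2\dot f=\tilde L+\eta$ and $\dot f(s)\ge m(2-s)+\eta>\max\limits_M h_{2-s}$; on $[0,1]$ set $\dot f\equiv-(\tilde L+\eta)$, so that $\int_0^1\dot f=-(\tilde L+\eta)$ — which forces $f(2)=0$ — and $\dot f=-(\tilde L+\eta)>-t$. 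After a small mollification of $f$ near $s\in\{0,1,2\}$, harmless because all these inequalities are strict, $\Phi$ is a smooth positive loop with $\Phi_0=\Phi_2=id_M$. This contradicts the orderability of $\mathrm{Cont}_0(M)$ in the form recalled just before the statement, so $|\phi_t^{\alpha}|_{\alpha}\ge t$. Together with the upper bound this gives $|\phi_t^{\alpha}|_{\alpha}=|t|$ for all $t$, and letting $t\to+\infty$ yields $\sup_{\phi\in\mathrm{Cont}_0(M)}|\phi|_{\alpha}=\infty$.

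The only genuinely substantive step is the bookkeeping for $f$: one must see that the budget $L<t$ lets $f$ decrease along the Reeb arc by enough to cancel the increase it is forced to make along the reversed $\psi$‑arc, so that the composite isotopy is really a positive \emph{loop} based at $id_M$. Everything else is technical — $m$ is merely continuous and the concatenation and composition introduce corners — but the replacement of $m$ by a smooth majorant with nearly the same integral, and the smoothing of $f$ and $\Gamma$, are routine; alternatively this regularisation can be packaged using the averaging construction from the proof of Lemma \ref{lemimportant} applied to the relevant arcs, which is the sense in which the present statement is a corollary of that lemma.
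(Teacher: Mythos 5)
Your proof is correct and rests on the same mechanism as the paper's: a hypothetical too-short isotopy realizing the Reeb time-$t$ map is corrected by composing with a time-reparametrised Reeb flow into a positive loop, contradicting orderability. The paper packages this by applying Lemma \ref{lemimportant} to a path ending at $\phi_{-t}^{\alpha}$ and then composing with $\phi_{ts}^{\alpha}$, whereas you concatenate with the Reeb arc and construct the reparametrising function $f$ by hand — but that explicit $f$ is precisely the device inside the proof of Lemma \ref{lemimportant}, so the two arguments coincide in substance.
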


Similarly one can prove

\begin{cor}\label{thm2}
Let $(M,\xi)$ be a closed contact manifold.
The group $\mathrm{Cont}_0(M)$ is orderable if and only if $\tau_{\alpha}(\phi,\psi)<\infty$ for all $\phi, \psi\in \mathrm{Cont}_0(M)$.
In this case, if $\phi_t^{\alpha}$ denotes the Reeb-flow with respect to $\alpha$, then
$$\tau_{\alpha}(\phi,\phi_t^{\alpha}\phi)=t$$
for all $ t\geq 0$ and all $\phi\in \mathrm{Cont}_0(M)$.
\end{cor}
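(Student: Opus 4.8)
The plan is to derive both the equivalence and the Reeb-flow formula from Lemma \ref{lemimportant} together with the characterisation of orderability as the absence of a positive loop based at $\mathrm{id}_M$ (\cite[Proposition 2.1.B]{Eliashberg}). Write $\ell(\phi_t):=\integ{0}{1}{\min_M\alpha(X_t^{\phi})}{t}$ for a path $\phi_t$, so that $\tau_{\alpha}(\phi,\psi)=\sup\{\ell(\phi_t)\}$ over non-negative paths from $\phi$ to $\psi$ whenever $\phi\preccurlyeq\psi$. I will use three routine facts, each immediate from the definitions: $\ell$ is invariant under orientation-preserving reparametrisation; $\ell$ is additive under concatenation; and precomposing a path by a fixed contactomorphism changes neither its contact vector field nor, therefore, $\ell$ or the sign of $\alpha(X_t)$. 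In particular a positive loop based at an arbitrary $\phi$ becomes, after right translation by $\phi^{-1}$, a positive loop based at $\mathrm{id}_M$, which is excluded when $\mathrm{Cont}_0(M)$ is orderable.

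For the implication ``$\tau_{\alpha}$ finite everywhere $\Rightarrow$ orderable'' I argue by contraposition. If $\mathrm{Cont}_0(M)$ is not orderable, fix a positive loop $\gamma_t$ based at $\mathrm{id}_M$. Since $(t,x)\mapsto\alpha(X_t^{\gamma}(x))$ is continuous and strictly positive on the compact set $[0,1]\times M$, there is $c>0$ with $\min_M\alpha(X_t^{\gamma})\geq c$ for all $t$, so $\ell(\gamma)\geq c$. The $N$-fold concatenation $\gamma^{(N)}$, reparametrised to $[0,1]$ and smoothed at the junctions, is again a non-negative loop based at $\mathrm{id}_M$, and by additivity and reparametrisation-invariance $\ell(\gamma^{(N)})=N\,\ell(\gamma)$. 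As $\mathrm{id}_M\preccurlyeq\mathrm{id}_M$, Definition \ref{def1} gives $\tau_{\alpha}(\mathrm{id}_M,\mathrm{id}_M)\geq N\ell(\gamma)\to\infty$, so $\tau_{\alpha}$ is not finite everywhere.

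Conversely, assume $\mathrm{Cont}_0(M)$ orderable and suppose, for contradiction, that $\tau_{\alpha}(\phi,\psi)=\infty$ for some $\phi\preccurlyeq\psi$ (if $\phi\not\preccurlyeq\psi$ then $\tau_{\alpha}(\phi,\psi)=0$). Pick non-negative paths $\gamma^n$ from $\phi$ to $\psi$ with $\ell(\gamma^n)\to\infty$ and, using connectedness of $\mathrm{Cont}_0(M)$, fix any smooth path $\beta$ from $\psi$ to $\phi$; put $b:=\ell(\beta)<\infty$. The concatenation $\zeta^n:=\gamma^n\ast\beta$ is a loop based at $\phi$ with $\ell(\zeta^n)=\ell(\gamma^n)+b\to\infty$. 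For $n$ large $\ell(\zeta^n)>0$, and applying Lemma \ref{lemimportant} to $\zeta^n$ with $\delta:=\tfrac12\ell(\zeta^n)$ produces a loop $\psi^n$ with the same endpoints satisfying $\min_M\alpha(X_t^{\psi^n})\in(\tfrac12\ell(\zeta^n),\tfrac32\ell(\zeta^n))$ for all $t$; in particular $\alpha(X_t^{\psi^n})>0$ everywhere. Thus $\psi^n$ is a positive loop based at $\phi$, contradicting orderability, and the equivalence follows.

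Finally, assume orderability and $t\geq0$, and let $R$ be the Reeb field of $\alpha$. The path $u\mapsto\phi_{tu}^{\alpha}\phi$, $u\in[0,1]$, joins $\phi$ to $\phi_t^{\alpha}\phi$ with contact vector field $tR$, hence $\alpha(X_u)\equiv t\geq0$ and $\ell=t$, giving $\tau_{\alpha}(\phi,\phi_t^{\alpha}\phi)\geq t$. If this inequality were strict there would be a non-negative path $\gamma$ from $\phi$ to $\phi_t^{\alpha}\phi$ with $\ell(\gamma)=t+\epsilon$ for some $\epsilon>0$; the reversed Reeb path $\sigma_u:=\phi_{t(1-u)}^{\alpha}\phi$ runs from $\phi_t^{\alpha}\phi$ back to $\phi$ with contact vector field $-tR$, so $\ell(\sigma)=-t$ and the loop $\gamma\ast\sigma$ based at $\phi$ has $\ell(\gamma\ast\sigma)=\epsilon>0$. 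Feeding $\gamma\ast\sigma$ into Lemma \ref{lemimportant} with $\delta:=\epsilon/2$ again yields a positive loop based at $\phi$, a contradiction; hence $\tau_{\alpha}(\phi,\phi_t^{\alpha}\phi)=t$. The step I expect to carry the weight is precisely this conversion of a loop with positive total $\ell$ into a genuine positive loop: a priori $\min_M\alpha(X_t^{\zeta})$ may be negative along part of $\zeta$, and Lemma \ref{lemimportant} is exactly the tool that trades a large positive average of $\min_M\alpha(X_t)$ for pointwise positivity — this is where closedness of $M$ enters, through the key lemma. The remaining points (extracting $c>0$ from compactness of $[0,1]\times M$, and the additivity, reparametrisation-invariance and precomposition-invariance of $\ell$) are elementary and I would check them directly from the definitions.
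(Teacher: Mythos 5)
Your proof is correct and rests on the same two pillars as the paper's: iterating a positive loop to force $\tau_{\alpha}=\infty$ in the non-orderable case, and, in the orderable case, using Lemma \ref{lemimportant} to convert a loop whose total integral $\ell$ is positive into a genuinely positive loop, contradicting orderability. The one place you genuinely diverge is in how that loop is manufactured. The paper composes two paths \emph{simultaneously}, setting $\tilde{\phi}_t:=\psi_t\circ\phi_t$ with $\psi_t$ a fixed path from $id_M$ to $\phi\psi^{-1}$; this forces it to control the conformal factors $\rho_t$ with $\psi_t^{\ast}\alpha=\rho_t\alpha$ and to exploit that $c$ can be taken arbitrarily large relative to the fixed bound $\min\rho_t$. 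You instead \emph{concatenate} the long non-negative path with a fixed return path (respectively with the reversed Reeb path for the formula $\tau_{\alpha}(\phi,\phi_t^{\alpha}\phi)=t$), using that right translation by a fixed contactomorphism leaves the contact vector field, and hence $\ell$, unchanged; then you apply Lemma \ref{lemimportant} to the whole loop at the end. This sidesteps the conformal-factor estimate entirely and is, if anything, cleaner; the small price is that you must smooth the junction and invoke reparametrisation invariance of $\ell$, both of which the paper itself uses elsewhere (in the proof of Theorem \ref{thm1}(ii) and (iii)). You also make explicit the lower bound $\tau_{\alpha}(\phi,\phi_t^{\alpha}\phi)\geq t$ coming from the Reeb path itself, which the paper leaves implicit. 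No gaps.
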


\begin{remark}
The fact that orderability is equivalent to the non existence of positive loops and to the finiteness of $\tau_{\alpha}$ is a huge difference to Lorentzian geometry.
In \cite{Minguzzi2} there are given examples of manifolds that contain lightlike loops but no timelike loops.
Moreover there are many examples of strongly causal spacetimes such that the Lorentzian distance is not finite (see e.g. \cite[Theorem 4.30.]{Beem}).
\end{remark}

For $0<\epsilon<\infty$ consider the sets
\begin{align*}
B_{\alpha}^+(\phi,\epsilon):=\{\psi\in \mathrm{Cont}_0(M)|0<\tau_{\alpha}(\phi,\psi)<\epsilon\},\\
 B_{\alpha}^-(\phi,\epsilon):=\{\psi\in \mathrm{Cont}_0(M)|0<\tau_{\alpha}(\psi,\phi)<\epsilon\}.
\end{align*}

\begin{cor}
The sets $B_{\alpha}^+(\phi,\epsilon_1)\cap B_{\alpha}^-(\psi,\epsilon_2)$ form a basis of the interval topology.
\end{cor}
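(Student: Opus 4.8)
\textit{Proof proposal.}

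The plan is to show that the family $\mathcal B:=\{\,B_\alpha^+(\phi,\e_1)\cap B_\alpha^-(\psi,\e_2)\,\}$ consists of interval-open sets and, conversely, refines the basis $\{\,I^+(\phi)\cap I^-(\psi)\,\}$ of the interval topology around each point; recall that a family $\mathcal B$ of open sets of a space $(X,\tau)$ is a basis as soon as for every $U\in\tau$ and every $x\in U$ there is $B\in\mathcal B$ with $x\in B\subseteq U$. For the first half I would use Theorem \ref{thm1}(ii): $\tau_\alpha$ is continuous for the interval topology, so the partial maps $\eta\mapsto\tau_\alpha(\phi,\eta)$ and $\eta\mapsto\tau_\alpha(\eta,\psi)$ are continuous into $[0,\infty]$, whence $B_\alpha^+(\phi,\e_1)=(\tau_\alpha(\phi,\cdot))^{-1}((0,\e_1))$ and $B_\alpha^-(\psi,\e_2)=(\tau_\alpha(\cdot,\psi))^{-1}((0,\e_2))$ are interval-open, and so is their intersection. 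Thus $\mathcal B$ is contained in the interval topology.

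For the second half, since the sets $I^+(\phi)\cap I^-(\psi)$ form a basis of the interval topology, it is enough to treat such a subbasic interval. Let $\chi\in I^+(\phi)\cap I^-(\psi)$, that is $\phi\llcurly\chi\llcurly\psi$. By Theorem \ref{thm1}(i) we have $\tau_\alpha(\phi,\chi)>0$ and $\tau_\alpha(\chi,\psi)>0$, and by Corollary \ref{thm2} (orderability) both are finite. Putting $\e_1:=\tau_\alpha(\phi,\chi)+1$ and $\e_2:=\tau_\alpha(\chi,\psi)+1$ gives $\chi\in B_\alpha^+(\phi,\e_1)\cap B_\alpha^-(\psi,\e_2)$, and this set lies inside $I^+(\phi)\cap I^-(\psi)$: any $\eta$ in it has $\tau_\alpha(\phi,\eta)>0$ and $\tau_\alpha(\eta,\psi)>0$, hence $\phi\llcurly\eta\llcurly\psi$ by Theorem \ref{thm1}(i). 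Together with the first step this gives that $\mathcal B$ is a basis of the interval topology. Coverage of $\mathrm{Cont}_0(M)$ is automatic: every $\chi$ lies in the subbasic interval $I^+(\phi^\alpha_{-s}\chi)\cap I^-(\phi^\alpha_s\chi)$ for small $s>0$ (using the Reeb flow of $\alpha$), hence in a member of $\mathcal B$.

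The argument is short because the substance has been front-loaded into Theorem \ref{thm1} (continuity of $\tau_\alpha$ and the equivalence $\tau_\alpha>0\Leftrightarrow\llcurly$) and into Corollary \ref{thm2} (finiteness of $\tau_\alpha$). The one place where care is genuinely needed is this finiteness: if $\mathrm{Cont}_0(M)$ is not orderable then, by Corollary \ref{thm2}, $\tau_\alpha$ takes only the values $0$ and $\infty$, so the sets $B_\alpha^\pm(\cdot,\e)$ with $\e<\infty$ are empty and the statement is understood in the orderable setting. A second minor point: the intersection property required of a genuine basis does not need a separate proof here, because it is inherited — via the construction of the previous paragraph — from the fact that the intervals $I^+(\phi)\cap I^-(\psi)$ already form a basis; if one only knew these to be a subbasis, the intersection property for $\mathcal B$ would still follow by interposing a point on a positive path together with the continuity of $\tau_\alpha$, and that interpolation step is the only spot where a little extra work would be required.
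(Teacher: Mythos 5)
Your proposal is correct and carries out in detail exactly the argument the paper delegates to the Lorentzian literature: the paper's proof consists of invoking the continuity of $\tau_\alpha$ from Theorem \ref{thm1}(ii) and then citing the standard argument for strongly causal spacetimes with continuous distance (\cite[Proposition 4.31]{Beem}), which is precisely your two-step scheme (the $B$-sets are interval-open by continuity; each point of a basic interval lies in a $B$-set contained in that interval). The one place where you deviate from the classical template is the containment step: you keep the original endpoints $\phi,\psi$ and enlarge $\e_1,\e_2$ beyond $\tau_\alpha(\phi,\chi)$ and $\tau_\alpha(\chi,\psi)$, which requires these values to be finite and hence appeals to orderability via Corollary \ref{thm2}; the Lorentzian argument instead replaces $\phi,\psi$ by reference points close to $\chi$ along a timelike curve so that continuity together with $\tau(\chi,\chi)=0$ makes the distances small, and thus never needs global finiteness. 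Both routes work here, and yours is slightly shorter given that Corollary \ref{thm2} is already available. Your closing caveat is also a genuine and correct observation that the paper leaves implicit: if $\mathrm{Cont}_0(M)$ is not orderable then $\llcurly$ is the total relation, $\tau_\alpha\equiv\infty$ on it, every $B_\alpha^{\pm}(\cdot,\e)$ with $\e<\infty$ is empty, and the family cannot cover the space, so the corollary must indeed be read in the orderable setting (where, by contrast, the degenerate interval topology $\{\emptyset,\mathrm{Cont}_0(M)\}$ of the non-orderable case does not arise).
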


\begin{proof}
Theorem \ref{thm1} implies that $\tau_{\alpha}$ is continuous with respect to the interval topology.
Then the proof works analogous to the case of a strongly causal Lorentzian manifold $(M,g)$ such that $\tau_g$ is continuous (see \cite[Proposition 4.31.]{Beem}).
\end{proof}

Now consider the case when $M$ is a spherical co-tangent bundle, i.e. $M$ is the quotient bundle $T^{\ast}N\setminus\{0\}/\real{}_{>0}$ for some smooth manifold $N$.
Here $\real{}_{>0}$ acts on $T^{\ast}N\setminus\{0\}$ by positive fibrewise homotheties.
The spherical co-tangent bundle naturally carries a contact structure $\xi_{st}$:
Consider the canonical Liouville form $\lambda$ on $T^{\ast}N$.
The $\real{}_{>0}$-action induces a Liouville vector field $Y$ on $T^{\ast}N\setminus\{0\}$, i.e. a vector field that satisfies $d\lambda(Y,\cdot)=\lambda$.
Since $Y$ is tangent to the $\real{}_{>0}$-action, the kernel of $\lambda$ projects to a contact structure $\xi_{st}$ on the quotient (see \cite{Geiges}).

Using estimates for spectral invariants of Legendrians on jet-spaces (\cite{Zapolsky}) we will in section \ref{sec4} show the following estimate for $\tau_{\alpha}$:

\begin{theorem}\label{thm3}
Assume that $(M,\xi)\cong (ST^{\ast}N,\xi_{st})$, where $N$ is closed and smoothly covered by an open subset of $\real{n}$.
Then there exists a constant $C_{\alpha}$ depending on $\alpha$ such that for all $\phi, \psi \in \mathrm{Cont}_0(M)$
$$\tau_{\alpha}(\phi,\psi)\leq C_{\alpha} d_{\alpha}(\phi,\psi).$$
\end{theorem}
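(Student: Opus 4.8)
The plan is to reduce the inequality to finding a single monotone ``spectral'' function on $\mathrm{Cont}_0(M)$. Suppose $\mathfrak{c}\colon\mathrm{Cont}_0(M)\to\R$ satisfies \textbf{(A)} $\mathfrak c(\phi_1)-\mathfrak c(\phi_0)\ge\int_0^1\min_M\alpha(X_t^{\phi})\,dt$ for every non-negative path $\phi_t$, and \textbf{(B)} $|\mathfrak c(\phi)-\mathfrak c(\psi)|\le C_\alpha\,d_\alpha(\phi,\psi)$ for a constant $C_\alpha$ depending only on $\alpha$. Then Theorem \ref{thm3} is immediate: if $\phi\preccurlyeq\psi$, taking in \textbf{(A)} the supremum over all non-negative paths from $\phi$ to $\psi$ gives $\tau_\alpha(\phi,\psi)\le\mathfrak c(\psi)-\mathfrak c(\phi)\le C_\alpha\,d_\alpha(\phi,\psi)$ by \textbf{(B)} (the middle term is $\ge 0$ here), while if $\phi\not\preccurlyeq\psi$ then $\tau_\alpha(\phi,\psi)=0$ and there is nothing to prove.

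To produce $\mathfrak c$ I would fix a point $q\in N$, take the Legendrian sphere $\Lambda_0:=ST^{\ast}_qN\subset M$, and set $\mathfrak c(\phi):=\ell\big(\phi(\Lambda_0)\big)$, where $\ell$ is a spectral number (say the one associated with the fundamental class) for Legendrians in the isotopy class of $\Lambda_0$, built as in \cite{Zapolsky}. This is exactly where the hypotheses on $N$ enter: lifting to the cover $ST^{\ast}U\subset ST^{\ast}\R^n$ with $U\subset\R^n$ open, and using the global $z$-coordinate of the jet-space model near a fibre sphere (a Legendrian-neighbourhood identification with the zero section of $J^1(S^{n-1})$), one obtains the invariants of \cite{Zapolsky} on the whole isotopy class of $\Lambda_0$; in particular that class carries no positive loop, so $\ell$ depends only on the Legendrian and $\mathrm{Cont}_0(M)$ is orderable (in agreement with Corollary \ref{thm2}). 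Property \textbf{(A)} is then the Hamiltonian/monotonicity estimate for $\ell$: a non-negative path $\phi_t$ drags $\Lambda_0$ along the Legendrian isotopy $\phi_t(\Lambda_0)$, which is generated by the \emph{same} contact vector field $X_t^{\phi}$ — applying an ambient contact isotopy introduces no conformal factor into the Legendrian's Hamiltonian — so $\ell(\phi_1\Lambda_0)-\ell(\phi_0\Lambda_0)\ge\int_0^1\min_{\phi_t(\Lambda_0)}\alpha(X_t^{\phi})\,dt\ge\int_0^1\min_M\alpha(X_t^{\phi})\,dt$.

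Property \textbf{(B)} is the Hofer--Lipschitz property of spectral invariants, and establishing it with a constant independent of $\phi,\psi$ is the step I expect to be the main obstacle. The naive route — joining $\phi(\Lambda_0)$ to $\psi(\Lambda_0)$ by pushing a nearly optimal isotopy from $\mathrm{id}$ to $\psi^{-1}\phi$ forward by the fixed contactomorphism $\psi$ — introduces the conformal factor of $\psi$ and fails to give a uniform bound. The resolution should exploit the jet-space picture, where the spectral invariants obey the exact shift law under the Reeb ($z$-)translation and are $1$-Lipschitz for the Shelukhin norm of the standard contact form; transporting this to $(M,\alpha)$ costs only the comparison between $\alpha$ and the standard form $\alpha_{st}$ on $M$, whose Shelukhin norms are equivalent with a constant depending only on $\alpha$ since $M$ is closed — this yields $C_\alpha$. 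An equivalent viewpoint I would try is the contact product $\big(M\times M\times\R,\ e^{s}\pi_2^{\ast}\alpha-\pi_1^{\ast}\alpha\big)$: there left multiplication by $\psi$ is realised by the \emph{strict} contactomorphism $(x,y,s)\mapsto\big(x,\psi(y),s-\log f_\psi(y)\big)$ with $\psi^{\ast}\alpha=f_\psi\alpha$, which preserves the contact form and hence acts by isometries for the associated Chekanov--Hofer metric, reducing \textbf{(B)} to bounding the Chekanov--Hofer length of the graph of a contact isotopy by its Shelukhin length. With \textbf{(A)} and \textbf{(B)} in hand the theorem follows as in the first paragraph.
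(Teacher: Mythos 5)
Your proposal follows essentially the same route as the paper: pass to the cover $ST^{\ast}\R^n\cong J^1(S^{n-1})$ via the hodograph transform and sandwich a Legendrian spectral invariant of the image of a fibre between the $\min$--integral (which controls $\tau_{\alpha}$) and the $\max$--integral (which controls $d_{\alpha}$); this sandwich is exactly Lemma \ref{lemzap}. The one place where you diverge --- and where you correctly sense the main difficulty --- is your property \textbf{(B)}. The conformal-factor problem you describe is real, but it is an artifact of asking for a Lipschitz bound on $\mathfrak c$ at \emph{arbitrary} pairs. The paper sidesteps it: both $\tau_{\alpha}$ and $d_{\alpha}$ are right invariant (composing a path on the right with a fixed contactomorphism does not change $X_t^{\phi}$), so one may assume $\phi=id_M$ from the start. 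At the identity, \textbf{(B)} is immediate from the upper half of the sandwich: for \emph{any} path $\psi_t$ from $id_M$ to $\psi$ one has $l(\psi(L),A)\le \int_0^1\max_M|\tilde\alpha(X_t^{\psi})|\,dt$, and since $l(\psi(L),A)$ depends only on the endpoint (Viterbo--Th\'eret uniqueness), taking the infimum over paths gives $l(\psi(L),A)\le |\psi|_{\tilde\alpha}$; combining with the supremum over non-negative paths of the lower bound yields $\tau_{\tilde\alpha}(id_M,\psi)\le d_{\tilde\alpha}(id_M,\psi)$ with constant $1$ for the form $\tilde\alpha$ pulled back from the jet space, and then $C_{\alpha}=\max\rho/\min\rho$ for $\alpha=\rho\tilde\alpha$. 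So the contact-product and strict-contactomorphism machinery in your last paragraph is not needed.

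Two smaller points. First, your version of \textbf{(A)} for paths not starting at the identity invokes the monotonicity of $\ell$ along arbitrary Legendrian isotopies; this is true and is in \cite{Zapolsky}, but it is not literally the statement of Lemma \ref{lemzap}, which is phrased for isotopies starting at the zero section --- again, the right-invariance reduction makes the general version unnecessary. Second, the invariants are defined on the cover, not on $M$ itself, so $\mathfrak c$ must be defined through lifts of isotopies starting at $id_M$; the path-independence of the resulting value is the point that actually has to be checked (the paper asserts it via the uniqueness theorem), and your remark about the absence of positive loops is not what is needed there.
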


In general the interval topology is strictly coarser than the topology induced by $d_{\alpha}$.
If $\mathrm{Cont}_0(M)$ is not orderable it is not even Hausdorff.
With the assumptions of Theorem \ref{thm3} (except for $N$ being closed) \cite{Chernov19} used similar methods to show that the interval topology on $\mathrm{Cont}_0(M)$ is Hausdorff.

\begin{qu}
Assume that $\mathrm{Cont}_0(M)$ is orderable.
Is $\mathrm{Cont}_0(M)$ then strongly causal in the sense that the interval topology coincides with the topology induced by $d_{\alpha}$?
\end{qu}

\begin{qu}
As pointed out above in Lorentzian geometry the strongly causal manifolds with a Lorentzian distance finite and continuous on the conformal class are globally hyperbolic.
Do Theorem \ref{thm1} and Corollary \ref{thm2} imply that if $\mathrm{Cont}_0(M)$ is orderable other properties of globally hyperbolic manifolds like the existence of time functions or Cauchy surfaces transfer to $\mathrm{Cont}_0(M)$?
\end{qu}

\section{Proofs}

The following result is the key Lemma to prove Theorem \ref{thm1} and many other results.

\begin{lemma}\label{lemimportant}
Let $\phi_t$ be a path of contactomorphisms with 
$$\integ{0}{1}{\min\limits_M\alpha(X_t^{\phi})}{t}=\epsilon.$$
Then for any $\delta>0$ there exists a path $\psi_t$ with $\psi_0=\phi_0$ and $\psi_1=\phi_1$ such that for all $t\in[0,1]$
$$\min\limits_M\alpha(X_t^{\psi})\in (\epsilon-\delta,\epsilon+\delta).$$
\end{lemma}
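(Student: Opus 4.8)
The plan is to leave $\phi_t$ essentially untouched and only compose it, on the left, with a time-dependent reparametrisation of the Reeb flow of $\alpha$, exploiting that this flow is ``invisible'' to the minimum of the contact Hamiltonian. Let $R$ be the Reeb vector field of $\alpha$ and $\rho_s$ its flow; recall $\alpha(R)\equiv1$ and $\rho_s^{\ast}\alpha=\alpha$, so $\rho_s$ is a contact isotopy with constant contact Hamiltonian $1$ and trivial conformal factor. For a smooth function $\mu\colon[0,1]\to\R$ with $\mu(0)=\mu(1)=0$, to be chosen below, I would set $\psi_t:=\rho_{\mu(t)}\circ\phi_t$; then $\psi_0=\rho_0\circ\phi_0=\phi_0$ and $\psi_1=\rho_0\circ\phi_1=\phi_1$, so the endpoint conditions hold for any admissible $\mu$.

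Next I would compute the contact Hamiltonian of $\psi_t$. Differentiating $\psi_t=\rho_{\mu(t)}\circ\phi_t$ gives $X_t^{\psi}=\mu'(t)R+(\rho_{\mu(t)})_{\ast}X_t^{\phi}$, and since $\rho_{\mu(t)}^{\ast}\alpha=\alpha$ this yields $\alpha(X_t^{\psi})=\mu'(t)+\alpha(X_t^{\phi})\circ\rho_{-\mu(t)}$. The first summand is constant on $M$ and the second differs from $\alpha(X_t^{\phi})$ only by precomposition with a diffeomorphism, so
$$\min_M\alpha(X_t^{\psi})=\mu'(t)+\min_M\alpha(X_t^{\phi}).$$
This identity is the only step that requires care: it uses in an essential way that $\rho_s$ preserves $\alpha$ exactly, so that left composition with $\rho_{\mu(t)}$ adds the scalar $\mu'(t)$ to the Hamiltonian uniformly over $M$ and introduces no position-dependent conformal factor. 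Composing with the flow of some other positive contact Hamiltonian would produce such a factor and destroy control of the minimum; this is why the Reeb flow is the right object to use.

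Finally I would choose $\mu$. Put $\underline h(t):=\min_M\alpha(X_t^{\phi})$, a continuous function with $\int_0^1\underline h=\epsilon$, and pick a smooth $\tilde h$ on $[0,1]$ with $\|\tilde h-\underline h\|_{\infty}<\delta$ and $\int_0^1\tilde h=\epsilon$ (smooth $\underline h$ and restore the integral by adding a constant of size $<\delta$). Then $\mu(t):=\int_0^t\bigl(\epsilon-\tilde h(s)\bigr)\,ds$ is smooth, $\mu(0)=0$ and $\mu(1)=\epsilon-\int_0^1\tilde h=0$, so $\psi_t$ is a genuine path of contactomorphisms with the prescribed endpoints, and by the displayed identity $\min_M\alpha(X_t^{\psi})=\bigl(\epsilon-\tilde h(t)\bigr)+\underline h(t)\in(\epsilon-\delta,\epsilon+\delta)$ for every $t$, as required. (If one is content with $C^{1}$ paths the smoothing step is unnecessary and one even obtains $\min_M\alpha(X_t^{\psi})\equiv\epsilon$; the role of $\delta$ is precisely to absorb the smoothing of $\underline h$. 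Note also that neither non-negativity of $\phi_t$ nor positivity of $\epsilon$ is used anywhere.)
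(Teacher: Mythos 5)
Your proof is correct and is essentially the paper's own argument: the paper likewise sets $\tau(t)$ with $\tau'(t)=-\min_M\alpha(X_t^{\phi})+\epsilon$, composes with the Reeb flow as $\phi^{\alpha}_{\tau(t)}\circ\phi_t$ using that it is a strict contactomorphism, and then smooths $\tau$ rel endpoints, which is exactly your construction with the smoothing performed before rather than after integrating. The key identity $\min_M\alpha(X_t^{\psi})=\mu'(t)+\min_M\alpha(X_t^{\phi})$ and the endpoint condition $\mu(1)=0$ coming from $\int_0^1\min_M\alpha(X_t^{\phi})\,dt=\epsilon$ are identical in both.
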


\begin{proof}[Proof of Lemma \ref{lemimportant}]
Let $\abb{\tau}{[0,1]}{\real{}}$ be defined by $\tau(0)=0$ and 
$$\tau'(t)=-\min\limits_M\alpha(X_t^{\phi})+\epsilon.$$
Note that $\tau'$ is continuous since $\alpha(X_t^{\phi})$ is smooth, i.e. $\tau(t)$ is $\mathcal{C}^1$.
Denote by $\phi_t^{\alpha}$ the Reeb-flow of the contact form $\alpha$.
Define $\tilde{\psi}_t:=\phi^{\alpha}_{\tau(t)}\circ \phi_t$.
Then $\tilde{\psi}_t$ is a $\mathcal{C}^1$ path of contactomorphisms with $\tilde{\psi}_0=\phi_0$ and $\tilde{\psi}_1=\phi_1$ since $\tau(0)=0$ and 
$$\tau(1)=-\integ{0}{1}{\min\limits_M\alpha(X_t^{\phi})}{t}+\epsilon =0.$$
Moreover 
$$\min\limits_M\alpha(X_t^{\tilde{\psi}})=\min\limits_M\alpha(X_t^{\phi})+\tau'(t)=\epsilon.$$
Here we used that $\phi_{\tau(t)}^{\alpha}$ is a strict contactomorphism for all $t$, i.e. $\phi_{\tau(t)}^{\alpha}{}^{\ast}\alpha=\alpha$.
Approximating $\tau$ with smooth functions with fixed endpoints we get $\psi_t$ with the desired properties.
\end{proof}

\begin{proof}[Proof of Proposition \ref{propinterval}]
We show that $I^{+}(\phi)$ is open with respect to $d_{\alpha}$.
The proof works analogously for $I^-(\phi)$.
Let $\psi\in I^+(\phi)$ and $\phi_t$ be a positive path with $\phi_0=\phi$ and $\phi_1=\psi$.
Choose $\epsilon>0$ such that $\min\limits_M\alpha(X_t^{\phi})>\epsilon$.
There exists a smooth family of positive functions $\rho_t$ such that $(\phi_t\circ\psi^{-1})^{\ast}\alpha =\rho_t\alpha$.
Define
$$\alpha_0:=\max\limits_{[0,1]\times M}\rho_t \alpha.$$
Take $\tilde{\psi}$ with $d_{\alpha_0}(\psi,\tilde{\psi})<\epsilon$.
By Lemma \ref{lemimportant} we can choose a path $\psi_t$ with $\psi_0=\psi$ and $\psi_1=\tilde{\psi}$ such that $\min\limits_M\alpha(X_t^{\psi})>-\epsilon$.
Define
$$\tilde{\phi}_t:=\phi_t\circ\psi^{-1}\circ \psi_t.$$
Then $\tilde{\phi}_0=\phi$ and $\tilde{\phi}_1=\tilde{\psi}$.
Moreover
\begin{align*}
\min\limits_M\alpha(X_t^{\tilde{\phi}})&\geq \min\limits_M(\phi_t\psi^{-1})^{\ast}\alpha(X_t^{\psi})+\min\limits_M\alpha(X_t^{\phi})\\
&\geq \min(0,\min\limits_M\alpha_0(X_t^{\psi}))+\min\limits_M\alpha(X_t^{\phi})>0.
\end{align*}
Hence $\tilde{\phi}_t$ is a positive path from $\phi$ to $\tilde{\psi}$, i.e. $\tilde{\psi}\in I^+(\phi)$.
It follows that $I^+(\phi)$ is open with respect to $d_{\alpha_0}$ and thus with respect to any $d_{\alpha}$ since these metrics are equivalent.
\end{proof}

\begin{proof}[Proof of Theorem \ref{thm1}]
\begin{itemize}
\item[(i)]
By definition $\phi\llcurly \psi\Rightarrow \tau_{\alpha}(\phi,\psi)>0$.
If $\tau_{\alpha}(\phi,\psi)>0$ there exists a path $\phi_t$ between $\phi$ and $\psi$ with $\epsilon:=\integ{0}{1}{\min\limits_M\alpha(X_t^{\phi})}{t}>0$.
Lemma \ref{lemimportant} implies the existence of a positive path between $\phi$ and $\psi$ by setting e.g. $\delta=\frac{\epsilon}{2}$.
\item[(ii)]
We show that $\tau_{\alpha}$ is continuous with respect to the interval topology.
Then the claim follows by Proposition \ref{propinterval}.
Let $(\phi,\psi)\in \tau_{\alpha}^{-1}((a,b))$, where $(a,b)\subset [0,\infty]$ is an open interval or $(a,b)=[0,b)$ or $(a,b)=(a,\infty]$.
Denote by $\phi_t^{\alpha}$ the Reeb flow of $\alpha$.

\textbf{Claim 1: }There exists $r>0$ such that $\tau_{\alpha}(\phi_t^{\alpha}\phi, \phi_s^{\alpha}\psi) \in (a,b)$ for all $s,t\in [-r,r]$.

Suppose not.

\textbf{1. Case:} Assume $b<\infty$ and for arbitrarily small $r$ there exist $t_0,s_0$ and a path $\psi_t$ with $\psi_0=\phi_{t_0}^{\alpha}\phi$ and $\psi_1=\phi_{s_0}^{\alpha}\psi$ such that $\integ{0}{1}{\min\limits_M\alpha(X_t^{\psi})}{t}\geq b$.

Due to Lemma \ref{lemimportant} one can assume that $\min\limits_M\alpha(X_t^{\psi})>b-\delta$ for $\delta$ arbitrarily small.
Let $\tau(t):=(t-1)t_0-ts_0$ and $\tilde{\psi}_t:=\phi_{\tau(t)}^{\alpha}\psi_t$.
Then $\tilde{\psi}_0=\phi$, $\tilde{\psi}_1=\psi$ and since $\phi_{\tau(t)}^{\alpha}$ is a strict contactomorphism
$$\min\limits_M\alpha(X_t^{\tilde{\psi}})\geq \min\limits_M\alpha(X_t^{\psi})+(s_0-t_0)> b-2r-\delta.$$
Since we can choose $r$ and $\delta$ arbitrarily small this contradicts $(\phi,\psi)\in \tau_{\alpha}^{-1}((a,b))$.

\textbf{2. Case: }Assume $a>0$ and for arbitrarily small $r$ there exist $t_0,s_0$ with $\tau_{\alpha}(\phi_{t_0}^{\alpha}\phi,\phi_{s_0}^{\alpha}\psi)\leq a$.

There exists a path $\psi_t$ with $\psi_0 =\phi$, $\psi_1=\psi$ and $\min\limits_M\alpha(X_t^{\psi})>a$ for all $t$.
Using the same argument like in Case 1 for small $\delta$ one can construct a path $\tilde{\psi}_t$ between $\phi_{t_0}^{\alpha}\phi$ and $\phi_{s_0}^{\alpha}\psi$ with $\integ{0}{1}{\min\limits_M\alpha(X_t^{\tilde{\psi}})}{t}>a$.

One can assume that always one of the two cases holds.
If $b=\infty$ every $t,s$ satisfies $\tau_{\alpha}(\phi_{t}^{\alpha}\phi,\phi_{s}^{\alpha}\psi)\leq b$.
Then either $a=0$, i.e. $\tau_{\alpha}^{-1}((a,b))=\mathrm{Cont}_0(M)\times\mathrm{Cont}_0(M)$ or the second case above holds.
Similarly if $a=0$ every $t,s$ satisfies $\tau_{\alpha}(\phi_{t}^{\alpha}\phi,\psi_{s}^{\alpha}\phi)\geq a$,
Then either $b=\infty$ or the first case above holds.

This shows Claim 1.

\textbf{Claim 2:}Choose $r>0$ like in  Claim 1.
Then $(I^+(\phi_{-r}^{\alpha}\phi)\cap I^-(\phi_{r}^{\alpha}\phi))\times (I^+(\phi_{-r}^{\alpha}\psi)\cap I^-(\phi_{r}^{\alpha}\psi))\subset \tau_{\alpha}^{-1}(a,b)$.

W.l.o.g. assume $a>0$ and $b<\infty$.
Take $\tilde{\phi} \in I^+(\phi_{-r}^{\alpha}\phi)\cap I^-(\phi_{r}^{\alpha}\phi) $ and $\tilde{\psi}\in I^+(\phi_{-r}^{\alpha}\psi)\cap I^-(\phi_{r}^{\alpha}\psi)$.
There exists a positive path $\psi_t$ with $\psi_0=\tilde{\phi}$, $\psi_{\frac{1}{3}}= \phi_{r}^{\alpha}\phi$, $\psi_{\frac{2}{3}}= \phi_{-r}^{\alpha}\psi$ and $\psi_1=\tilde{\psi}$ such that
$$\integ{0}{1}{\min\limits_M\alpha(X_t^{\psi})}{t}> a.$$
Here we used that $\integ{0}{1}{\min\limits_M\alpha(X_t^{\psi})}{t}$ is invariant under smooth orientation preserving re-parametrisation of $\psi_t$ with respect to $t$, i.e. one can choose $\psi_t$ such that 
$$\integ{\frac{1}{3}}{\frac{2}{3}}{\min\limits_M\alpha(X_t^{\psi})}{t}=\tau_{\alpha}(\phi_{r}^{\alpha}\phi,\phi_{-r}^{\alpha}\psi)-\delta > a$$
for $\delta$ arbitrarily small.
On the other hand, the existence of a path $\psi_t$ between $\tilde{\phi}$ and $\tilde{\psi}$ with $\integ{0}{1}{\min\limits_M\alpha(X_t^{\psi})}{t}\geq b$ would imply that $\tau_{\alpha}(\phi_{-r}^{\alpha}\phi,\phi_{r}^{\alpha}\psi)>b$.
This shows Claim 2, in particular, $\tau_{\alpha}$ is continuous with respect to the interval topology.
\item[(iii)]
Assume $id_M\preccurlyeq\psi\preccurlyeq\phi$.
Take non-negative paths $\psi_t$ from $id_M$ to $\psi$ and $\tilde{\psi}_t$ from $id_M$ to $\phi_2$.
Following \cite{Egor} take smooth functions $\abb{\tau_1}{[0,1]}{[0,1]}$ and $\abb{\tau_2}{[0,1]}{[0,1]}$ with $\mathrm{supp}(\tau_1')\subset [0,\frac{1}{2}]$, $\mathrm{supp}(\tau_2')\subset [\frac{1}{2},1]$ and $\tau_i'\geq 0$ such that $\tau_1(0)=0=\tau_2(0)$ and $\tau_1(1)=1=\tau_2(1)$.
Then $\hat{\psi}_t:=\psi_{\tau_1(t)}\tilde{\psi}_{\tau_2(t)}$ defines a smooth path between $id_M$ and $\psi\phi$ with
$$\integ{0}{1}{\min\limits_M\alpha(X_t^{\hat{\psi}_t})}{t}\geq \integ{0}{1}{\min\limits_M\alpha(X_t^{\psi})}{t}+\integ{0}{1}{\min\limits_M\alpha(X_t^{\tilde{\psi}})}{t}.$$
It follows that 
$$\tau_{\alpha}(id_M,\psi\phi)\geq \tau_{\alpha}(id_M,\psi)+\tau_{\alpha}(id_M,\phi).$$
Then property (iii) follows from the right invariance of $\tau_{\alpha}$.
\item[(iv)]
Analogous to property (iv) in \cite{Egor} .
\end{itemize}
\end{proof}

\begin{proof}[Proof of Corollary \ref{thmq18}]
Assume that $|\phi_{-t}^{\alpha}|_{\alpha}<t$, where $t>0$.
Then there exists a path $\psi_s$ with $\psi_0=id_M$ and $\psi_1=\phi_{-t}^{\alpha}$ such that $\integ{0}{1}{\max\limits_M|\alpha(X_s^{\psi})|}{s}<t$.
By Lemma \ref{lemimportant} we can assume that $\min\limits_M\alpha(X_s^{\psi})>-t$ for all $s$.
Define $\tilde{\psi}_s:=\phi_{ts}^{\alpha}\psi_s$.
The path $\tilde{\psi}_s$ is a loop with $\tilde{\psi}_0=id_M=\tilde{\psi}_1$.
Moreover since $\phi_{ts}^{\alpha}$ is a strict contactomorphism one has
\begin{align*}
\min\limits_M\alpha(X_s^{\tilde{\psi}})&\geq \min\limits_M\alpha(X_s^{\psi})+t>0.
\end{align*}
This contradicts the orderability of $(M,\xi)$.
The corollary follows from the fact that $|\phi_t^{\alpha}|_{\alpha}=|\phi_{-t}^{\alpha}|_{\alpha}$.
\end{proof}

\begin{proof}[Proof of Corollary \ref{thm2}]
If $\mathrm{Cont}_0(M)$ is not orderable one can choose a positive loop $\phi_t$ (\cite[Proposition 2.1.B]{Eliashberg}).
Iterating this loop, we get $\tau_{\alpha}(\phi_0,\phi_0)=\infty$.
Let $\mathrm{Cont}_0(M)$ be orderable, i.e. there is no non-negative loop in $\mathrm{Cont}_0(M)$.
Take $\phi \preccurlyeq \psi$.
Assume that $\tau_{\alpha}(\phi,\psi)=\infty$.
Hence for any $c>0$ there exists a positive path $\phi_t$ from $\phi$ to $\psi$ with $\integ{0}{1}{\min\limits_M\alpha(X_t^{\phi})}{t}>c$.
Due to Lemma \ref{lemimportant} one can assume that $\min\limits_M\alpha(X_t^{\phi})>c$ for all $t$.
Fix a path $\psi_t$ from $id_M$ to $\phi\circ \psi^{-1}$.
Define
$$\tilde{\phi}_t:=\psi_t\circ \phi_t.$$
Then $\tilde{\phi}_0=\phi=\tilde{\phi}_1$.
Let $\rho_t$ be the family of positive functions with $\psi_t^{\ast}\alpha=\rho_t\alpha$ and $\alpha_0:=\rho_{\min} \alpha$, where $\rho_{\min}:=\min\limits_{[0,1]\times M}\rho_t$.
Then
\begin{align*}
\min\limits_M\alpha(X_t^{\tilde{\phi}})&\geq \min\limits_M\psi_t^{\ast}\alpha(X_t^{\phi})+\min\limits_M\alpha(X_t^{\psi})\\
&\geq \min\limits_M\alpha_0(X_t^{\phi})+\min\limits_M\alpha(X_t^{\psi}).
\end{align*}
Since $\tau_{\alpha}(\phi,\psi)=\infty$ implies that $\tau_{\alpha_0}(\phi,\psi)=\infty$ and $c$ was arbitrarily big, one can choose $\phi_t$ such that
$$\min\limits_M\alpha(X_t^{\tilde{\phi}})>0.$$
Then $\tilde{\phi}$ is a positive loop.
This contradicts the orderability of $(M,\xi)$.

Similarly if $\mathrm{Cont}_0(M)$ is orderable, consider the Reeb flow $\phi_t^{\alpha}$ of $\alpha$.
Assume that $\tau_{\alpha}(\phi,\phi_t^{\alpha}\phi)>t$, where $t\geq 0$.
Take a path $\psi_s$ from $\phi$ to $\phi_t^{\alpha}\phi$ with $\min\limits_M\alpha(X_s^{\phi})>t$ for all $s\in[0,1]$.
Then $\phi_{-ts}^{\alpha}\psi_s$ defines a positive loop contradicting the orderability of $(M,\xi)$.
\end{proof}

\section{Spherical co-tangent bundles and spectral invariants}\label{sec4}

To prove Theorem \ref{thm3} we will use spectral invariants on jet-spaces as defined for instance in \cite{Colin}, \cite{Chernov10} or \cite{Chernov19}.
We will follow the conventions in \cite{Zapolsky}.
Let $N$ be a closed manifold.
Its $1$-jet space is given by
$$J^1N:=T^{\ast}N\times \real{}.$$
The space $J^1N$ canonically carries the contact structure defined by the $1$-form $\alpha_0:=\lambda+dt$, where $\lambda$ denotes the canonical Liouville-form on $T^{\ast}M$ and $t$ the projection to the $\real{}$-component.
An important example of Legendrian submanifolds are given by $1$-jets of functions.
For $\abb{f}{N}{\real{}}$ define its $1$-jet by
$$j^1f:=\{(p,-df_p,f(p))\in J^1N|p\in N\}.$$
In particular the $0$-section of $J^1N$ as the $1$-jet of the $0$-function is a Legendrian submanifold.
Note that all $1$-jets are Legendrian isotopic to the $0$-section.
On the other hand not all Legendrians in this isotopy class are $1$-jets of some function.
An easy way to describe them is given by generating functions:

For some $m\in \mathbb{N}$ consider a smooth function $\abb{S}{N\times \real{m}}{\real{}}$.
Look at the set $\Sigma_S:=\{(p,e)\in N\times\real{m}|d_eS=0\}$, where $d_eS$ denotes the fibre differential in the $\real{m}$ direction.
If $0$ is a regular value of $d_eS$, $\Sigma_S$ is a submanifold of $N\times\real{m}$.
Moreover the map $i_S(p,e):=(p,-d_pS(p,e),S(p,e))$ immerses $\Sigma_S$ into $J^1(N)$ as a Legendrian submanifold.
Here $d_pS$ denotes the differential in the $N$ direction which is well defined (independent of the choice of a horizontal distribution) on $\Sigma_S$.
The function $S$ is called a \textit{generating function} for the Legendrian $i_S(\Sigma_S)$.
We say that $S$ is \textit{quadratic at infinity} if $S$ is of the form
$$S(p,e)=f(p,e)+Q(e),$$
where $f$ is compactly supported and $Q$ is a non-degenerate quadratic form.
For Legendrians isotopic to the $0$-section one can show

\begin{theo}[\cite{Chekanov}]
Let $L\subset J^1N$ be a Legendrian submanifold Legendrian isotopic to the $0$-section.
Then there exists a quadratic at infinity generating function $S$ generating $L$.
Moreover if $L_t$ is a smooth isotopy of Legendrians isotopic to $0$, there exists a smooth family of quadratic at infinity generating functions $S_t$ such that $S_t$ generates $L_t$.
\end{theo}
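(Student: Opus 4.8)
The plan is to deduce the theorem from a persistence property for quadratic-at-infinity generating functions along Legendrian isotopies. The base case is the $0$-section $0_N\subset J^1N$: it is generated by $S_0(p,e)=Q(e)$ for a nondegenerate quadratic form $Q$ (say $Q(e)=e^2$ on $\R$), since then $\Sigma_{S_0}=\{e=0\}\cong N$, $0$ is a regular value of $d_eS_0$, $i_{S_0}(\Sigma_{S_0})=0_N$, and $S_0$ is quadratic at infinity. Because every $L$ in the statement is Legendrian isotopic to $0_N$, it is enough to prove the ``moreover'' assertion and then apply it to an isotopy $\{L_t\}_{t\in[0,1]}$ with $L_0=0_N$, $L_1=L$. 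So fix a smooth Legendrian isotopy $\{L_t\}_{t\in[0,1]}$ within the isotopy class of the $0$-section.

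First I would trade this isotopy of Legendrians for an ambient isotopy of contactomorphisms. Since $N$ is closed, each $L_t$ is compact and $\bigcup_t L_t$ is contained in a compact subset of $J^1N$, so the Legendrian (ambient contact) isotopy extension theorem produces a compactly supported contact isotopy $\{\Phi_t\}_{t\in[0,1]}$ of $J^1N$, smooth in $t$, with $\Phi_0=\mathrm{id}$ and $\Phi_t(L_0)=L_t$ for all $t$.

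The core tool is a composition (fibre-sum) lemma of Sikorav--Chekanov type, i.e. Chaperon's broken-geodesic construction. Fix a compact neighbourhood $K$ of $\bigcup_t L_t$ in $J^1N$. There is $\varepsilon>0$ such that any compactly supported contactomorphism $\psi$ of $J^1N$ which is $\varepsilon$-close to $\mathrm{id}$ in the $C^1$-topology on $K$ possesses a generating function $F_\psi$, quadratic at infinity, depending smoothly on $\psi$ and equal to a fixed nondegenerate quadratic form when $\psi=\mathrm{id}$; and whenever a Legendrian $L'$ lying in $K$ has a quadratic-at-infinity generating function $S'\colon N\times\R^m\to\R$ with $0$ a regular value of $d_eS'$, the Legendrian $\psi(L')$ admits a quadratic-at-infinity generating function $S'\star\psi\colon N\times\R^{m+\ell}\to\R$, again with $0$ a regular value, obtained from $S'$ and $F_\psi$ by adjoining $\ell$ new fibre variables, where $\ell$ is independent of $\psi$ and $S'$; moreover $S'\star\mathrm{id}$ is just $S'$ stabilised by a fixed nondegenerate quadratic form, and the whole construction is smooth in all of its data. (Keeping the fibre Euclidean here uses a fixed Whitney embedding of $N$ into some $\R^K$.) Proving this lemma — producing $F_\psi$, and verifying that ``quadratic at infinity'', the regular-value condition and control of the fibre dimension all survive the fibre sum — is the heart of the argument and the step I expect to be the main obstacle; it is also where compactness of $N$ is used.

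It then remains to integrate the lemma along $\{\Phi_t\}$. Since $\{\Phi_t:t\in[0,1]\}$ is a continuous family of compactly supported contactomorphisms, there is $k\in\N$ such that for every $t\in[0,1]$ and every $j\in\{1,\dots,k\}$ the slice $\psi_{t,j}:=\Phi_{jt/k}\circ\Phi_{(j-1)t/k}^{-1}$ is $\varepsilon$-close to $\mathrm{id}$ in $C^1$ on $K$ (each intermediate Legendrian $\psi_{t,j}\circ\cdots\circ\psi_{t,1}(L_0)=L_{jt/k}$ indeed lies in $K$). Define, bracketing from the left,
\[
S_t:=S_0\star\psi_{t,1}\star\psi_{t,2}\star\cdots\star\psi_{t,k}.
\]
Since $\Phi_t=\psi_{t,k}\circ\cdots\circ\psi_{t,1}$, iterated use of the lemma shows $S_t$ is a quadratic-at-infinity generating function of $L_t=\Phi_t(L_0)$ with $0$ a regular value of its fibre differential; the combinatorial pattern (the number $k$ of slices, hence the fibre dimension) is the same for all $t$, and each $\psi_{t,j}$ is smooth in $t$, so $t\mapsto S_t$ is smooth, and at $t=0$ it is $S_0$ stabilised by a fixed quadratic form. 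This uniform discretisation is what makes smoothness in $t$ immediate: the more naive approach of partitioning $[0,1]$ and extending $S_t$ subinterval by subinterval would force, at each break point, a reconciliation of two constructions agreeing only up to stabilisation and fibre equivalence, which in turn would require a uniqueness theorem for quadratic-at-infinity generating functions. Finally, taking $L_0=0_N$ and $L_1=L$ gives the first assertion, and running the construction on the given isotopy $\{L_t\}$ gives the ``moreover'' part.
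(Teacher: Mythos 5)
This statement is not proved in the paper at all: it is imported verbatim as Chekanov's theorem, with a citation to \cite{Chekanov} (see also \cite{Theret}), and used as a black box to define the spectral invariants in Section 4. So there is no in-paper proof to compare yours against; what you have written has to be judged against the literature it would replace.

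On its own terms, your outline is the standard Chaperon--Sikorav--Chekanov strategy, and the organization is right: start from the $0$-section with generating function a fixed nondegenerate quadratic form, pass to an ambient compactly supported contact isotopy by the Legendrian isotopy extension theorem, and iterate a composition lemma along a subdivision of the isotopy. Your uniform-slicing device $\psi_{t,j}=\Phi_{jt/k}\circ\Phi_{(j-1)t/k}^{-1}$ with $k$ fixed for all $t$ is exactly the right way to get a genuinely smooth family $S_t$ without invoking the Viterbo--Th\'eret uniqueness theorem at break points, and your remarks about keeping the fibre Euclidean via a Whitney embedding and about $0$ remaining a regular value are the correct technical checkpoints. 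The genuine gap is the one you flag yourself: the composition lemma carries the entire analytic content of the theorem, and you only assert it. Worse, its statement as you give it is not yet well posed in the contact setting: unlike the symplectic case, there is no canonical way to regard ``a generating function $F_\psi$ of a compactly supported contactomorphism $\psi$ of $J^1N$'' --- the graph of $\psi$ is not naturally a Legendrian in a jet space, and the fibre-sum formula $S'\star\psi$ must be built either from the $\mathbb{R}_{>0}$-homogeneous symplectization picture or from local solutions of a Hamilton--Jacobi equation for the contact Hamiltonian of a $C^1$-small $\psi$, after which one still has to verify quadraticity at infinity and the regular-value condition of the summed function. Until that lemma is stated precisely and proved, what you have is a correct reduction of Chekanov's theorem to its hardest step, not a proof of it.
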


Generating functions can be used to define spectral invariants for Legendrian submanifolds.
Let $S$ be a quadratic at infinity generating function for a Legendrian submanifold $L$.
Since $S=f+Q$ and the function $f$ has compact support, there exist $a_0,b_0\in \real{}$ such that $H_{\ast}(\{S<b\},\{S<a\},\mathbb{Z}_2)\cong H_{\ast}(\{S<b_0\},\{S<a_0\},\mathbb{Z}_2)$ for all $a\leq a_0<b_0\leq b$.
Let $q$ be the (negative) index of $Q$.
There exists a natural graded isomorphism
$$\abb{i}{H_{\ast}(\{S<b\},\{S<a\},\mathbb{Z}_2)}{H_{\ast-q}(N,\mathbb{Z}_2)}$$
that is independent of $a\leq a_0$ and $b\geq b_0$ (see \cite{Zapolsky}).
Thus for $a\leq a_0$ and any $b>a$ there is a natural inclusion $\abb{i^b}{H_{\ast}(\{S<b\},\{S<a\},\mathbb{Z}_2)}{H_{\ast-q}(N,\mathbb{Z}_2)}$.
For $A\in H_{\ast}(N,\mathbb{Z}_2)$ define
$$l(L,A):=\inf \{b\in\real{}| A\in \mathrm{im}(i^b)\}.$$
Due to the Viterbo-Theret uniqueness theorem \cite{Viterbo},\cite{Theret} the spectral invariant $l(L,A)$ is independent of the choice of a generating function.
We will use the following Lemma from Zapolsky, for further properties of the spectral invariants see e.g. \cite{Zapolsky}.

\begin{lemma}[\cite{Zapolsky}]\label{lemzap}
Let $L\subset J^1N$ be a Legendrian submanifold isotopic to $0$ and $\phi_t$ a path of contactomorphisms such that $\phi_0=id_M$ and $\phi_1(0)=L$.
Then for any $A\in H_{\ast}(N)$
$$\integ{0}{1}{\min\limits_{\phi_t(0)}\alpha_0(X_t^{\phi})}{t}\leq l(L,A)\leq \integ{0}{1}{\max\limits_{\phi_t(0)}\alpha_0(X_t^{\phi})}{t}.$$
\end{lemma}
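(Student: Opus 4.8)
The plan is to express $l(L,A)$ as the integral over $[0,1]$ of the derivative of the spectral invariant along a Legendrian isotopy. Put $L_t:=\phi_t(0)$, a Legendrian isotopy from the $0$-section to $L$, and use the theorem of Chekanov \cite{Chekanov} quoted above to choose a smooth family $S_t\colon N\times\real{m}\to\real{}$ of quadratic at infinity generating functions for $L_t$, with $S_0$ purely quadratic so that $l(L_0,A)=l(0,A)=0$. The lemma will follow from two claims: (a) the function $g(t):=l(L_t,A)$ is locally Lipschitz, and for almost every $t$ one has $g'(t)=(\partial_tS_t)(x_t)$ for some $x_t\in\Sigma_{S_t}$ with $S_t(x_t)=g(t)$; and (b) for such $x_t$, the number $(\partial_tS_t)(x_t)$ equals $\alpha_0(X_t^{\phi})$ evaluated at the point $i_{S_t}(x_t)$, which lies on $L_t$. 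Granting (a) and (b), $g$ is absolutely continuous on $[0,1]$, hence
$$l(L,A)=g(1)-g(0)=\integ{0}{1}{g'(t)}{t},$$
and since $i_{S_t}(x_t)\in L_t=\phi_t(0)$ we obtain $\min\limits_{\phi_t(0)}\alpha_0(X_t^{\phi})\le g'(t)\le\max\limits_{\phi_t(0)}\alpha_0(X_t^{\phi})$ for almost every $t$; integrating over $[0,1]$ gives exactly the two-sided bound of the lemma. I would establish (b) first, as a short computation, and then (a), which is the main work.

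For (b), let $\zeta\colon J^1N\to\real{}$ be the projection onto the $\real{}$-factor, so that $\alpha_0=\lambda+d\zeta$ with $\lambda$ the Liouville form. Fix $x_0$ in the $0$-section, set $\ell(t):=\phi_t(x_0)\in L_t$, and write $\ell(t)=i_{S_t}(p(t),e(t))$ with $(p(t),e(t))\in\Sigma_{S_t}$; then the base point of $\ell(t)$ is $p(t)$, its fibre co-vector is $u(t)=-d_pS_t(p(t),e(t))$, and $\zeta(\ell(t))=S_t(p(t),e(t))$. Differentiating $t\mapsto S_t(p(t),e(t))$ and using $d_eS_t=0$ on $\Sigma_{S_t}$ together with $d_pS_t=-u$, one gets $\frac{d}{dt}\zeta(\ell(t))=(\partial_tS_t)(p(t),e(t))-\lambda(\dot\ell)$. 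On the other hand, $\frac{d}{dt}\phi_t=X_t^{\phi}\circ\phi_t$ gives $\dot\ell=X_t^{\phi}(\ell)$, so $\frac{d}{dt}\zeta(\ell(t))=d\zeta(X_t^{\phi})(\ell)=\alpha_0(X_t^{\phi})(\ell)-\lambda(X_t^{\phi})(\ell)$; since $\lambda(\dot\ell)=\lambda(X_t^{\phi})(\ell)$, comparing the two expressions for $\frac{d}{dt}\zeta(\ell(t))$ yields $(\partial_tS_t)(p(t),e(t))=\alpha_0(X_t^{\phi})(\ell(t))$. As $i_{S_t}$ parametrises all of $L_t$, this shows that $\partial_tS_t$, transported to $L_t$ via $i_{S_t}$, is the restriction to $L_t=\phi_t(0)$ of the contact Hamiltonian $\alpha_0(X_t^{\phi})$, which is (b).

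The step I expect to be the main obstacle is (a): the Lipschitz regularity of $g$ and the identification of $g'$, where it exists, with $\partial_tS_t$ at a critical point of $S_t$ at the min-max level $g(t)$. This is the Lusternik-Schnirelmann, or min-max, mechanism underlying \cite{Zapolsky} (compare \cite{Viterbo,Theret} in the Lagrangian setting): one shows that $g(t)$ is a critical value of $S_t$, and that a min-max cycle realising $g(t_0)$ slightly above the level $g(t_0)$ can be reused for all $S_t$ with $t$ near $t_0$, which bounds $g(t)-g(t_0)$ from above by $(t-t_0)\max_{K_{t_0}}\partial_tS_{t_0}+o(|t-t_0|)$ for $t>t_0$, and symmetrically from below and for $t<t_0$, where $K_{t_0}:=\{x\in\Sigma_{S_{t_0}}\mid S_{t_0}(x)=g(t_0)\}$; a first order Taylor expansion of $S_t$ in $t$ then pins down $g'(t_0)$. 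Along the way one uses the standing facts that $l$ does not depend on the choice of generating functions (Viterbo-Theret uniqueness, quoted above), that $i_{S_t}$ parametrises $L_t$ smoothly in $t$, and that $l(0,A)=0$, all of which follow from the cited results. Once (a) is in hand the remaining steps are elementary, so the whole proof essentially reduces to this min-max continuity statement.
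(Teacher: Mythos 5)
This lemma is imported from \cite{Zapolsky} and the paper gives no proof of it, so there is no internal argument to compare against; what you have written is a reconstruction of the standard proof from the generating-function literature, and as a plan it is essentially correct. Your computation (b) is right with the paper's conventions ($i_S(p,e)=(p,-d_pS,S)$, $\alpha_0=\lambda+dt$): on $\Sigma_{S_t}$ the fibre differential vanishes and $d_pS_t=-u$, so the two expressions for $\frac{d}{dt}\zeta(\ell(t))$ do give $(\partial_tS_t)(p(t),e(t))=\alpha_0(X_t^{\phi})(\ell(t))$; this is the standard fact that $\partial_tS_t$, restricted to the fibre-critical set, is the contact Hamiltonian on $L_t$. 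Given (a) and (b), the integration of the Dini derivative bounds over $[0,1]$ together with $l(L_0,A)=0$ and Viterbo--Theret uniqueness yields the two-sided estimate exactly as you say.

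The one place where your argument is genuinely incomplete is (a), and you have correctly located the crux but not discharged it. The danger to be aware of: the easy monotonicity of $l$ under pointwise comparison of generating functions ($S\leq S'$ implies $l(S,A)\leq l(S',A)$) only bounds $g(t)-g(t_0)$ by $\int\max_{N\times\real{m}}\partial_tS_t\,dt$, i.e.\ by the maximum over the \emph{whole} domain of $S_t$, which has no a priori relation to $\max_{L_t}\alpha_0(X_t^{\phi})$. So one really does need the Lusternik--Schnirelmann localization: push a representing relative cycle at level $g(t_0)+\epsilon$ down by the negative gradient flow of $S_{t_0}$ until it lies below $g(t_0)-\delta$ outside an arbitrarily small neighbourhood of the critical set $K_{t_0}$ at level $g(t_0)$, and only then invoke the first-order expansion of $S_t$ in $t$ on that neighbourhood. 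Carried out on both sides this gives the Dini derivative bounds by $\max_{K_{t_0}}\partial_tS_{t_0}$ and $\min_{K_{t_0}}\partial_tS_{t_0}$ (which suffices; you do not actually need $g'$ to exist a.e.\ and equal $\partial_tS_t(x_t)$, only local Lipschitz continuity plus these one-sided bounds). Zapolsky's own route controls $\partial_tS_t$ via the composition formula for generating functions rather than via the deformation lemma, but both mechanisms are standard; your plan would be a complete proof once this deformation step is written out.
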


\begin{proof}[Proof of Theorem \ref{thm3}]
Assume that $M=ST^{\ast}N$, where $N$ is smoothly covered by an open subset of $\real{n}$.
W.l.o.g this open subset contains the origin.
Consider a positive path of contactomorphisms $\phi_t$ in $\mathrm{Cont}_0(M)$.
Note that $M$ is covered by $ST^{\ast}\real{n}$ such that the projection map is a local contactomorphism.
Hence $\phi_t$ lifts to a positive path $\tilde{\phi}_t$ on $ST^{\ast}\real{n}$.
Since $\tau_{\alpha}$ and $d_{\alpha}$ are right invariant we can assume that $\phi_0=id_M$.
As in \cite{Chernov10} we use the hodograph transform to get a contactomorphism from  $ST^{\ast}\real{n}$ to $J^1(S^{n-1})$ such that the fibre over the origin is mapped to the $0$-section in $J^1(S^{n-1})$.
In particular the spectral invariants can be defined for Legendrians in $ST^{\ast}\real{n}$ isotopic to the fibres.
Denote by $\tilde{\alpha}$ the contact form on $M$ induced by the standard contact form on $J^1(S^{n-1})$ and by $L$ a Legendrian that lifts to the fibre over the origin.
Then using Lemma \ref{lemzap} we get
\begin{align*}
\integ{0}{1}{\min\limits_M\tilde{\alpha}(X_t^{\phi})}{t}&\leq \integ{0}{1}{\min\limits_{\phi_t(L)}\tilde{\alpha}(X_t^{\phi})}{t}\leq l(\phi_1(L),A)\\
&\leq \integ{0}{1}{\max\limits_{\phi_t(L)}\tilde{\alpha}(X_t^{\phi})}{t}\leq \integ{0}{1}{\max\limits_{M}|\tilde{\alpha}(X_t^{\phi})|}{t}.
\end{align*}
Note that the spectral invariant $l(\phi_1(L),A)$ is independent of the choice of path between $id_M$ and $\phi_1$.
Taking the supremum over all paths from $id_M$ to $\phi_1$ on the left hand side and the infimum over all paths on the right hand side shows
$$\tau_{\tilde{\alpha}}(\phi_0,\phi_1)\leq d_{\tilde{\alpha}}(\phi_0,\phi_1).$$
For $\alpha=\rho \tilde{\alpha}$ one gets
$$\tau_{\alpha}(\phi_0,\phi_1)\leq \frac{\max\rho}{\min\rho} d_{\alpha}(\phi_0,\phi_1).$$
\end{proof}

\section{A Lorentzian distance function for Legendrian isotopy classes}\label{sleg}

Let $(M,\xi)$ be any co-orientable contact manifold (not necessarily closed).
For a closed Legendrian denote by $\mathcal{L}$ its Legendrian isotopy class.
Given a parametrisation $\abb{l_t}{L}{L_t}$ of a Legendrian isotopy denote by $X_t^l$ the section of $TM|_{L_t}$ defined by
$$X_t^l(l_t(p)):=\frac{d}{ds}|_{s=t}l_s(p).$$
A Legendrian isotopy $L_t$ is called \textit{ positive (non-negative)} if there is a parametrisation $\abb{l_t}{L}{L_t}$ such that
$$\alpha(X_t^l)>(\geq)0.$$
Given a positive (non-negative) isotopy $L_t$ of closed Legendrians, there exists a compactly supported non-negative path of contactomorphisms $\phi_t$ such that $L_t=\phi_{f(t)}(L)$ for a non-decreasing function $f$ (see \cite[Proposition 4.1]{Chernov16}).
Analogous to the relations on $\mathrm{Cont}_0(M)$ positivity defines relations $\llcurly$ and $\preccurlyeq$ on $\mathcal{L}$.
The isotopy class $\mathcal{L}$ is called \textit{orderable} if $\preccurlyeq$ is a partial order.

There also holds a  version of Lemma \ref{lemimportant} for Legendrian isotopy classes.

\begin{lemma}\label{lemimportant2}
Let $\abb{l_t}{L}{L_t}$ be a Legendrian isotopy with 
$$\integ{0}{1}{\min\limits_{L_t}\alpha(X_t^{l})}{t}=\epsilon.$$
Assume that $l_t=\phi_t|_{L_0}$ for some compactly supported path of contactomorphisms $\phi_t$.
Then for all $\delta>0$ there exists $\abb{\tilde{l}_t}{L_0}{\tilde{L}_t}$ with $\tilde{L}_1=L_1$ such that
$$\min\limits_{L_t}\alpha(X_t^{\tilde{l}})\in (\epsilon-\delta,\epsilon+\delta).$$
\end{lemma}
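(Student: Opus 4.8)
The plan is to imitate the proof of Lemma~\ref{lemimportant} almost word for word; the only genuinely new ingredient is that on a non-closed $M$ the Reeb flow of $\alpha$ need not be complete, and the hypothesis $l_t=\phi_t|_{L_0}$ with $\phi_t$ compactly supported is exactly what is used to get around this.

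First I would set $m(t):=\min_{L_t}\alpha(X_t^{l})$. Since $(t,p)\mapsto X_t^{l}(l_t(p))$ is smooth and $L$ is compact, $m$ is continuous on $[0,1]$, and by hypothesis $\integ{0}{1}{m}{t}=\epsilon$. As in Lemma~\ref{lemimportant}, define $\abb{\tau}{[0,1]}{\R}$ by $\tau(0)=0$ and $\tau'(t)=-m(t)+\epsilon$; then $\tau$ is $\mathcal C^1$ and $\tau(1)=-\epsilon+\epsilon=0$. Denoting by $\phi^\alpha_s$ the Reeb flow of $\alpha$, the isotopy we want is $\tilde l_t:=\phi^\alpha_{\tau(t)}\circ l_t$: since $\phi^\alpha_s$ is a strict contactomorphism, the chain rule gives, at the point $\tilde l_t(p)$,
$$\alpha(X_t^{\tilde l})=\tau'(t)\,\alpha(R_\alpha)+\bigl((\phi^\alpha_{\tau(t)})^{\ast}\alpha\bigr)(X_t^{l})\big|_{l_t(p)}=\tau'(t)+\alpha(X_t^{l})\big|_{l_t(p)},$$
so that $\min_{\tilde L_t}\alpha(X_t^{\tilde l})=\tau'(t)+m(t)=\epsilon$ for all $t$, while $\tilde l_0=\phi^\alpha_0\circ l_0=l_0$ and $\tilde L_1=\phi^\alpha_0(L_1)=L_1$. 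As at the end of Lemma~\ref{lemimportant}, $\tau$ is only $\mathcal C^1$, so one finally replaces it by a smooth function with the same endpoint values and $\mathcal C^1$-close to it, obtaining a smooth $\tilde l_t$ with $\min_{L_t}\alpha(X_t^{\tilde l})\in(\epsilon-\delta,\epsilon+\delta)$, as claimed.

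The point that must be addressed, and the one I expect to be the main obstacle, is that $\phi^\alpha_s$ need not be defined for all $s\in\R$, so $\phi^\alpha_{\tau(t)}\circ l_t$ requires justification. This is where compact support of $\phi_t$ enters: $K:=\bigcup_{t\in[0,1]}L_t$ is the continuous image of the compact set $L\times[0,1]$, and since $\phi_t$ equals the identity outside a fixed compact set, $K$ is contained in a compact subset of $M$; hence there is $\eta>0$ with $\phi^\alpha_s$ defined on a neighbourhood of $K$ for $|s|\le\eta$. Picking $\chi\in C_c^\infty(M)$ equal to $1$ on a sufficiently large relatively compact neighbourhood of $K$, the field $\chi R_\alpha$ is complete and its flow $\theta_s$ agrees with $\phi^\alpha_s$ — in particular is strict — on a neighbourhood of $K$ for $|s|\le\eta$; replacing $\phi^\alpha$ by $\theta$ above is harmless provided the flow parameter never leaves this admissible range. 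To arrange this one first reparametrises $l_t$ (which changes neither $\integ{0}{1}{m}{t}$ nor the endpoints, since the integrand rescales by the reparametrisation) so as to spread out the ``motion'' of the isotopy, bringing $\sup_t|\tau(t)|$ into the range where $\theta_{\tau(t)}$ is still the genuine Reeb flow near $K$; alternatively one works over a fine partition $0=s_0<\dots<s_N=1$ on whose pieces $\tau$ varies by less than $\eta$ and carries out the correction piece by piece, checking that the pieces glue to a continuous path from $L_0$ to $L_1$ with the required bound on $\min_{L_t}\alpha(X_t^{\tilde l})$. The contact-geometric content is identical to that of Lemma~\ref{lemimportant}; the genuinely new work is precisely this bookkeeping forced by the possible incompleteness of $R_\alpha$ on a non-compact manifold.
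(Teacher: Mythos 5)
Your argument is exactly the paper's: define $\tau$ by $\tau(0)=0$ and $\tau'(t)=-\min_{L_t}\alpha(X_t^{l})+\epsilon$, compose with the Reeb flow to get $\phi^{\alpha}_{\tau(t)}\circ\phi_t$ (strict, so the minimum shifts by exactly $\tau'(t)$), check the endpoints using $\int_0^1\min_{L_t}\alpha(X_t^{l})\,dt=\epsilon$, and finally smooth $\tau$ with fixed endpoints. The paper simply writes $\phi^{\alpha}_{\tau(t)}\circ\phi_t$ without commenting on completeness of the Reeb flow on a non-closed $M$, so your third paragraph is an addition rather than a reconstruction; the only caveat there is that reparametrising $l_t$ in time cannot shrink $\sup_t|\tau(t)|$ (the set of values $\{\int_0^t\min_{L_s}\alpha(X_s^{l})\,ds\}$ is invariant under orientation-preserving reparametrisation), so to make the cutoff argument airtight one should instead choose the neighbourhood of $K$ large enough to contain the Reeb trajectories emanating from $K$ for all times in $[\min\tau,\max\tau]$, rather than trying to force $\tau$ into a small range.
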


\begin{proof}
Take a compactly supported path of contactomorphisms such that $\phi_t|_{L_0}=l_t$.
Let $\abb{\tau}{[0,1]}{\real{}}$ with $\tau(0)=0$ and
$$\tau'(t):=-\min\limits_{L_t}\alpha(X_t^{l})+\epsilon.$$
Then $\psi_t:=\phi_{\tau(t)}^{\alpha}\phi_t$ satisfies $\psi_0(L_0)=L_0$ and $\psi_1(L_0)=L_1$.
Moreover since $\phi_{\tau(t)}^{\alpha}$ is a strict contactomorphism
$$\min\limits_{\psi_t(L_0)}\alpha(X_t^{\psi})=\min\limits_{L_t}\alpha(X_t^{l})+\tau'(t)=\epsilon.$$
Approximating $\tau$ by smooth functions with fixed endpoints finally gives $\tilde{l_t}$ with the desired properties.
\end{proof}

For closed $M$ Rosen and Zhang in \cite{Rosen} defined a Chekanov-type metric on the orbit space of a subset $M$ under the action of $\mathrm{Cont}_0(M)$.
For Legendrians $L_0,L_1\in\mathcal{L}$ it is given by
$$d_{\alpha}(L_0,L_1):=\inf\{|\phi|_{\alpha}|\phi(L_0)=L_1\}.$$

The map $d_{\alpha}$ has the following properties:

\begin{theo}[\cite{Rosen}]
The map $d_{\alpha}$ satisfies
\begin{itemize}
\item[(i)]$d_{\alpha}(L,L)=0$
\item[(ii)]$d_{\alpha}(L_0,L_1)=d_{\alpha}(L_1,L_0)$
\item[(iii)]$d_{\alpha}(L_0,L_2)\leq d_{\alpha}(L_0,L_1)+d_{\alpha}(L_1,L_2)$
\item[(iv)]For $\phi\in \mathrm{Cont}_0(M)$ generated by a contact Hamiltonian $F$ there exist constants $C^-(\phi,F)$ and $C^+(\phi,F)$ such that
$$C^-(\phi,F)d_{\alpha}(L_0,L_1)\leq d_{\alpha}(\phi(L_0),\phi(L_1))\leq C^+(\phi,F)d_{\alpha}(L_0,L_1).$$
\end{itemize}
\end{theo}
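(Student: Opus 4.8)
The plan is to obtain (i)--(iii) as formal consequences of the properties of the Shelukhin norm $|\cdot|_{\alpha}$ recalled in the Theorem of \cite{Egor}, and (iv) from its conjugation behaviour. The only geometric input is that the set $\{\phi\in\mathrm{Cont}_0(M)\mid\phi(L_0)=L_1\}$ over which the infimum defining $d_{\alpha}(L_0,L_1)$ is taken is non-empty whenever $L_0,L_1\in\mathcal{L}$: since $L_0$ and $L_1$ are joined by a Legendrian isotopy, the contact isotopy extension theorem provides an ambient compactly supported contact isotopy carrying $L_0$ to $L_1$, whose time-one map lies in $\mathrm{Cont}_0(M)$ when $M$ is closed; moreover $|\phi|_{\alpha}<\infty$ for every such $\phi$, so $d_{\alpha}$ takes finite values.

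For (i), taking $\phi=id_M$ gives $d_{\alpha}(L,L)\le|id_M|_{\alpha}=0$, and $|\cdot|_{\alpha}\ge0$ forces equality. For (ii), the map $\phi\mapsto\phi^{-1}$ is a bijection from $\{\phi\mid\phi(L_0)=L_1\}$ onto $\{\psi\mid\psi(L_1)=L_0\}$ preserving the norm, by property (iii) of $|\cdot|_{\alpha}$, so the two defining infima agree. For (iii), given $\varepsilon>0$ pick $\phi$ with $\phi(L_0)=L_1$, $|\phi|_{\alpha}<d_{\alpha}(L_0,L_1)+\varepsilon$ and $\psi$ with $\psi(L_1)=L_2$, $|\psi|_{\alpha}<d_{\alpha}(L_1,L_2)+\varepsilon$; then $\psi\phi(L_0)=L_2$ and property (ii) of $|\cdot|_{\alpha}$ gives $d_{\alpha}(L_0,L_2)\le|\psi\phi|_{\alpha}\le|\phi|_{\alpha}+|\psi|_{\alpha}<d_{\alpha}(L_0,L_1)+d_{\alpha}(L_1,L_2)+2\varepsilon$; letting $\varepsilon\to0$ yields the triangle inequality.

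Property (iv) is the only one with content beyond the norm axioms. If $\psi(L_0)=L_1$ then $\phi\psi\phi^{-1}$ sends $\phi(L_0)$ to $\phi(L_1)$ and, by property (iv) of $|\cdot|_{\alpha}$, $|\phi\psi\phi^{-1}|_{\alpha}=|\psi|_{\phi^{\ast}\alpha}$. Since $\phi$ preserves the co-oriented $\xi$, one has $\phi^{\ast}\alpha=g\,\alpha$ for a positive function $g\colon M\to\R_{>0}$, which on the closed manifold $M$ satisfies $0<\min_M g\le\max_M g<\infty$. Pointwise $|(g\alpha)(X_t^{\psi})|\le(\max_M g)\,|\alpha(X_t^{\psi})|$, so $|\psi|_{\phi^{\ast}\alpha}\le(\max_M g)\,|\psi|_{\alpha}$; taking the infimum over such $\psi$ gives $d_{\alpha}(\phi(L_0),\phi(L_1))\le(\max_M g)\,d_{\alpha}(L_0,L_1)$, and one may set $C^{+}(\phi,F):=\max_M g$. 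Applying the same estimate to $\phi^{-1}$ and the pair $\phi(L_0),\phi(L_1)$, and noting $(\phi^{-1})^{\ast}\alpha=(g\circ\phi^{-1})^{-1}\alpha$ has conformal factor with maximum $(\min_M g)^{-1}$, gives $d_{\alpha}(L_0,L_1)\le(\min_M g)^{-1}\,d_{\alpha}(\phi(L_0),\phi(L_1))$, so $C^{-}(\phi,F):=\min_M g$ works. The function $g$ is the time-one conformal factor of the flow of the contact Hamiltonian $F$, hence $C^{\pm}$ depend only on $\phi$ and $F$.

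I expect no serious obstacle here: once the infimum set in the definition of $d_{\alpha}$ is seen to be non-empty and the conformal factor of a contactomorphism of a closed manifold is bounded above and away from $0$, items (i)--(iv) are purely formal. The one point worth emphasising is that non-degeneracy of $d_{\alpha}$ --- the implication $d_{\alpha}(L_0,L_1)=0\Rightarrow L_0=L_1$ --- is \emph{not} part of this statement; that is the genuinely delicate issue, addressed (for orderable $\mathcal{L}$) in the remainder of the section.
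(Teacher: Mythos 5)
Your argument is correct, and it is essentially the standard derivation: since $d_{\alpha}$ is defined as an infimum of the Shelukhin norm over the transitive, conjugation-stable set $\{\phi\in\mathrm{Cont}_0(M)\mid\phi(L_0)=L_1\}$, items (i)--(iii) follow formally from the norm axioms and (iv) from $|\phi\psi\phi^{-1}|_{\alpha}=|\psi|_{\phi^{\ast}\alpha}$ together with the bounds $0<\min_M g\le\max_M g<\infty$ on the conformal factor of $\phi$ on a closed manifold. Note that the paper does not reprove this statement but imports it from \cite{Rosen}, where the proof proceeds along the same lines; you are also right to flag that non-degeneracy is deliberately excluded here and is the substantive point treated later (Theorem \ref{thmnd}).
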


\begin{remark}
Due to the non-conjugation invariance of the Hofer-Shelukhin norm this metric fails to be left invariant, i.e. in general $d_{\alpha}(L_0,L_1)\neq d_{\alpha}(\phi(L_0),\phi(L_1))$.
\end{remark}

For closed Legendrians \cite[Conjecture 1.10]{Rosen} states that $d_{\alpha}$ is always non-degenerate.
The conjecture was proven in \cite[Corollary 3.5]{Usher} for hypertight Legendrians.
A simple proof of \cite[Conjecture 1.10]{Rosen} can be given for orderable Legendrian isotopy classes.

\begin{theorem}\label{thmnd}
Let $L$ be a closed Legendrian such that $\mathcal{L}$ is orderable.
Then $d_{\alpha}$ is non-degenerate.
\end{theorem}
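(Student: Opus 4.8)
The only one of the metric axioms not already contained in the cited theorem of Rosen--Zhang is non-degeneracy, so the plan is to show that $d_{\alpha}(L_0,L_1)=0$ forces $L_0=L_1$. The strategy is to exploit that $d_{\alpha}$-smallness, after being ``flattened'' by Lemma \ref{lemimportant2} and nudged by a short piece of Reeb flow, produces genuine \emph{positive} Legendrian isotopies, and then to squeeze $L_0$ against $L_1$ along the Reeb direction using orderability.

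Concretely, fix $\epsilon>0$ and choose $\phi$ with $\phi(L_0)=L_1$ together with a compactly supported path $\phi_t$, $\phi_0=\mathrm{id}$, $\phi_1=\phi$, with $\int_0^1\max_M|\alpha(X_t^{\phi})|\,dt<\epsilon$. Restricting gives a Legendrian isotopy $l_t:=\phi_t|_{L_0}$ from $L_0$ to $L_1$ with $\int_0^1\max_{L_t}|\alpha(X_t^{l})|\,dt<\epsilon$, so $c:=\int_0^1\min_{L_t}\alpha(X_t^{l})\,dt$ satisfies $|c|<\epsilon$. Apply Lemma \ref{lemimportant2} (with $\delta=\epsilon$) to replace $l_t$ by $\tilde l_t$ with the same endpoints and $\min_{L_t}\alpha(X_t^{\tilde l})>c-\epsilon$, then compose with $\phi^{\alpha}_{st}$ for $s:=|c|+2\epsilon$; since $\phi^{\alpha}$ is strict the resulting isotopy runs from $L_0$ to $\phi^{\alpha}_{s}(L_1)$ with $\min\alpha(X)>s+c-\epsilon\ge\epsilon>0$, so it is positive, and as $s<3\epsilon$, concatenating with the Reeb flow up to time $3\epsilon$ (itself positive) yields $L_0\llcurly\phi^{\alpha}_{3\epsilon}(L_1)$. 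Letting $\epsilon\to0$ gives $L_0\llcurly\phi^{\alpha}_{\eta}(L_1)$ for every $\eta>0$; running the same argument on the pair $(L_1,L_0)$, which is legitimate because $d_{\alpha}(L_1,L_0)=d_{\alpha}(L_0,L_1)=0$, gives $L_1\llcurly\phi^{\alpha}_{\eta}(L_0)$, equivalently $\phi^{\alpha}_{-\eta}(L_1)\llcurly L_0$, for every $\eta>0$. One should also record that all these positive isotopies have energy $\int_0^1\max\alpha(X)\,dt=O(\epsilon)$.

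Thus $\phi^{\alpha}_{-\eta}(L_1)\llcurly L_0\llcurly\phi^{\alpha}_{\eta}(L_1)$ for all $\eta>0$, while $\phi^{\alpha}_{\pm\eta}(L_1)\to L_1$ in the $d_{\alpha}$-metric (indeed $d_{\alpha}(L_1,\phi^{\alpha}_{\pm\eta}(L_1))\le\eta$). One now wants to pass to the limit $\eta\to0$ and obtain $L_0\preccurlyeq L_1$ and $L_1\preccurlyeq L_0$, whence $L_0=L_1$ since $\preccurlyeq$ is a partial order. This is where orderability must be used essentially: the ``intervals'' $\{K:\phi^{\alpha}_{-\eta}(L)\llcurly K\llcurly\phi^{\alpha}_{\eta}(L)\}$ should shrink to $\{L\}$ as $\eta\downarrow0$, and to see this for a $K\ne L$ lying in all of them one uses the vanishing-energy bound of the previous step to extract a convergent (hence stationary, since a non-negative Legendrian isotopy with $\alpha(X)\equiv0$ is constant) limit isotopy through $K$, contradicting $K\ne L$; alternatively, one proves that the interval topology on $\mathcal L$ is Hausdorff for orderable $\mathcal L$ (the analogue of strong causality) and, since it is coarser than the $d_{\alpha}$-topology (the analogue of Proposition \ref{propinterval}), concludes that the $d_{\alpha}$-topology is Hausdorff, so the pseudo-metric $d_{\alpha}$ is a genuine metric.

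The first two steps are routine once Lemma \ref{lemimportant2} is in hand; the main obstacle is the last one. The relations $\phi^{\alpha}_{-\eta}(L_1)\llcurly L_0\llcurly\phi^{\alpha}_{\eta}(L_1)$ hold for \emph{every} pair $L_0,L_1$ — they are forced by the Reeb flow alone — so no purely order-theoretic manipulation can close the argument; orderability (equivalently, the absence of positive loops) has to be invoked quantitatively, through the explicit small-energy isotopies constructed above or through the Hausdorffness of the interval topology.
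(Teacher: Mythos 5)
Your first two steps are fine, but the argument does not close. The relations $\phi^{\alpha}_{-\eta}(L_1)\llcurly L_0\llcurly\phi^{\alpha}_{\eta}(L_1)$ for all $\eta>0$ do not yield $L_0=L_1$ from orderability alone: orderability only excludes positive loops, and gives neither closedness of $\preccurlyeq$ under the limit $\eta\to 0$ nor the claim that the intervals $I^+(\phi^{\alpha}_{-\eta}(L))\cap I^-(\phi^{\alpha}_{\eta}(L))$ shrink to $\{L\}$. The two repairs you sketch are themselves unproven: the ``vanishing-energy limit isotopy'' is not extracted (there is no compactness statement available for these isotopies), and Hausdorffness of the interval topology for general orderable $\mathcal L$ is not known --- the paper explicitly leaves the analogous strong-causality question open and only cites Hausdorffness results of Chernov--Nemirovski under extra hypotheses on $M$. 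Also, your combination of the two chains only produces $L_0\llcurly\phi^{\alpha}_{6\epsilon}(L_0)$, which is true for trivial reasons (the Reeb flow is positive), so no contradiction with orderability can be manufactured this way: the Reeb push is in the wrong direction in both chains. (Your parenthetical claim that these relations hold for \emph{every} pair is moreover false; e.g.\ $L_1=\phi^{\alpha}_{-T}(L_0)$ for large $T$ would then force a positive loop.)

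The paper closes the argument with two ideas you are missing. First, by \cite[Theorem 1.9]{Rosen} the pseudo-metric $d_{\alpha}$ on an orbit of $\mathrm{Cont}_0(M)$ is either non-degenerate or \emph{identically zero}, so it suffices to exhibit a single pair at positive distance --- there is no need to treat arbitrary pairs with $d_{\alpha}(L_0,L_1)=0$. Second, the right pair is $(L,\phi^{\alpha}_{-1}(L))$, i.e.\ the target is displaced \emph{backward} along the Reeb flow: if $d_{\alpha}(L,\phi^{\alpha}_{-1}(L))=0$, a path $\phi_t$ with $\phi_1(L)=\phi^{\alpha}_{-1}(L)$ and energy $<\epsilon$ composed with the forward Reeb flow, $\psi_t:=\phi^{\alpha}_t\phi_t$, is a loop of Legendrians based at $L$ with $\int_0^1\min_M\alpha(X_t^{\psi})\,dt\geq 1-\epsilon>0$, and Lemma \ref{lemimportant2} then produces a genuinely positive loop in $\mathcal L$, contradicting orderability. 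The backward displacement is what makes the forward Reeb flow simultaneously close the loop and supply the positivity --- exactly the feature absent from your construction.
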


\begin{proof}
By \cite[Theorem 1.9]{Rosen} $d_{\alpha}$ is either non-degenerate or vanishes identically.
Thus it suffices to show that $d_{\alpha}(L_0,L_1)>0$ for two Legendrians $L_0,L_1\in\mathcal{L}$.
Look at the inverse of the time-$1$ map of the Reeb-flow $\phi_{-1}^{\alpha}$.
Suppose $d_{\alpha}(L,\phi^{\alpha}_{-1}(L))=0$ for some $L\in\mathcal{L}$.
Then for any $\epsilon>0$ there exists a compactly supported $\phi_t$ with $\phi_0=id_M$ and $\phi_1(L)=\phi_{-1}^{\alpha}(L)$ such that
$$\integ{0}{1}{\max\limits_{M}|\alpha(X_t^{\phi})|}{t}<\epsilon.$$
Look at $\psi_t:=\phi_t^{\alpha}\phi_t$.
Then $\psi_0(L)=\psi_1(L)=L$.
Moreover since $\phi_t^{\alpha}$ is a strict contactomorphism 
$$\integ{0}{1}{\min\limits_{M}\alpha(X_t^{\psi})}{t}\geq 1+ \integ{0}{1}{\min\limits_{M}\alpha(X_t^{\phi})}{t}\geq 1-\epsilon>0$$
for $\epsilon<1$.
Lemma \ref{lemimportant2} implies that there exists a positive loop in $\mathcal{L}$.
\end{proof}

\begin{remark}
In \cite[Example 1.12]{Colin2} the authors give an example of a hypertight Legendrian contained in a (non-contractible) positive loop, i.e. there are non-orderable hypertight Legendrians.
On the other hand let $M$ be a closed manifold such that its universal cover is not compact and $\pi_k(M)\neq 0$ for some $k>1$.
Let $\abb{i}{M}{\Lambda_0 M}$ be the map that maps a point $p$ in $M$ to the constant loop through $p$ in the component of contractible loops of the free loop space $\Lambda_0 M$.
In this case the induced map in singular homology is not surjective.
It follows from the proof of \cite[Theorem 4.1]{Viterbo2} that $ST^{\ast}M$ does not admit a contact form without contractible Reeb orbits (see also \cite{Abbondandolo} for the isomorphism between the Floer homology of co-tangent bundles and the singular homology of the free loop space).
Thus $ST^{\ast}M$ is not hypertight but as shown in \cite{Chernov102} the isotopy class of the fibre is orderable.
This shows that Theorem \ref{thmnd} as well as \cite[Corollary 3.5]{Usher} are not covered by the other result.
\end{remark}

\begin{remark}
Since $M$ is closed, the metrics induced by different contact forms are equivalent.
\end{remark}

\begin{cor}\label{cor}
Let $L$ be a closed Legendrian such that $\mathcal{L}$ is orderable.
Then the interval topology on $\mathcal{L}$ is coarser than the topology induced by $d_{\alpha}$, in particular, the sets $I^{\pm}(L)$ are open in this topology.
\end{cor}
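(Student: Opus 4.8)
The plan is to mimic the proof of Proposition~\ref{propinterval}. Since $\mathcal{L}$ is orderable, Theorem~\ref{thmnd} guarantees that $d_{\alpha}$ is non-degenerate, so ``the topology induced by $d_{\alpha}$'' is a genuine metric topology and it suffices to prove that $I^{+}(L_{0})$ (and, by the symmetric argument, $I^{-}(L_{0})$) is open, i.e. that every $L_{1}\in I^{+}(L_{0})$ has a $d_{\alpha}$-neighbourhood contained in $I^{+}(L_{0})$.

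Fix $L_{1}\in I^{+}(L_{0})$ and a positive Legendrian isotopy $L_{t}$ from $L_{0}$ to $L_{1}$. Since $M$ is closed, extend it to a \emph{positive} contact isotopy $\phi_{t}$ of $M$ with $\phi_{0}=\mathrm{id}_{M}$ and $\phi_{t}(L_{0})=L_{t}$: the contact Hamiltonian $\alpha(X_{t}^{l})$ of the isotopy is positive on $L_{t}$ and smooth in $t$, and any extension of it to a function $H_{t}>0$ on $M$ generates such a $\phi_{t}$, because whether $\phi_{t}$ preserves the family $L_{t}$ depends only on $H_{t}|_{L_{t}}$ (the class modulo $TL_{t}$ of the contact vector field of $H_{t}$ along $L_{t}$ being determined by $H_{t}|_{L_{t}}$). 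By compactness of $[0,1]$ pick $\epsilon>0$ with $\min_{M}\alpha(X_{t}^{\phi})>\epsilon$ for all $t$, let $\rho_{t}\,\alpha:=(\phi_{t}\circ\phi_{1}^{-1})^{\ast}\alpha$ and set $\alpha_{0}:=\bigl(\max_{[0,1]\times M}\rho_{t}\bigr)\alpha$. Now let $\tilde{L}_{1}$ satisfy $d_{\alpha_{0}}(L_{1},\tilde{L}_{1})<\epsilon$. Then there is $g\in\mathrm{Cont}_{0}(M)$ with $g(L_{1})=\tilde{L}_{1}$ together with a path $g_{t}$ from $\mathrm{id}_{M}$ to $g$ with $\int_{0}^{1}\max_{M}|\alpha_{0}(X_{t}^{g})|\,dt<\epsilon$; putting $\psi_{t}:=g_{t}\circ\phi_{1}$ gives a path with $\psi_{0}=\phi_{1}$, $\psi_{1}(L_{0})=\tilde{L}_{1}$ and $\int_{0}^{1}\min_{M}\alpha_{0}(X_{t}^{\psi})\,dt>-\epsilon$, and by Lemma~\ref{lemimportant} applied to the contact form $\alpha_{0}$ we may additionally assume $\min_{M}\alpha_{0}(X_{t}^{\psi})>-\epsilon$ for every $t$. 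Set $\tilde{\phi}_{t}:=\phi_{t}\circ\phi_{1}^{-1}\circ\psi_{t}$, so that $\tilde{\phi}_{0}=\mathrm{id}_{M}$ and $\tilde{\phi}_{1}(L_{0})=\psi_{1}(L_{0})=\tilde{L}_{1}$. Exactly as in the proof of Proposition~\ref{propinterval}, the composition formula for the contact vector field of $\tilde{\phi}_{t}$ together with $\rho_{t}\,\alpha\le\alpha_{0}$ yields
$$\min_{M}\alpha(X_{t}^{\tilde{\phi}})\ \ge\ \min_{M}\bigl[(\phi_{t}\circ\phi_{1}^{-1})^{\ast}\alpha\,(X_{t}^{\psi})\bigr]+\min_{M}\alpha(X_{t}^{\phi})\ \ge\ \min\{0,\min_{M}\alpha_{0}(X_{t}^{\psi})\}+\min_{M}\alpha(X_{t}^{\phi})\ >\ 0 .$$
Hence $\tilde{\phi}_{t}(L_{0})$ is a positive Legendrian isotopy from $L_{0}$ to $\tilde{L}_{1}$, i.e. $\tilde{L}_{1}\in I^{+}(L_{0})$. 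Thus the $d_{\alpha_{0}}$-ball of radius $\epsilon$ about $L_{1}$ lies in $I^{+}(L_{0})$, and since the metrics induced by different contact forms are equivalent on the closed manifold $M$ this ball contains a $d_{\alpha}$-ball; therefore $I^{+}(L_{0})$ is $d_{\alpha}$-open.

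The only step that is not a literal transcription of the proof of Proposition~\ref{propinterval} is the passage from the positive Legendrian isotopy to a positive \emph{ambient} contact isotopy $\phi_{t}$ with $\alpha(X_{t}^{\phi})$ bounded below by a positive constant on all of $M$. One cannot simply invoke the merely non-negative ambient isotopy of \cite[Proposition~4.1]{Chernov16}, nor localise the estimate to a neighbourhood of $\bigcup_{t}L_{t}$: a contactomorphism of small Hofer--Shelukhin norm need not be $C^{0}$-small, so the points $\tilde{\phi}_{t}(L_{0})=\phi_{t}(\phi_{1}^{-1}(\psi_{t}(L_{0})))$ entering the estimate cannot be confined to such a neighbourhood, and one genuinely needs $\alpha(X_{t}^{\phi})>\epsilon$ everywhere on $M$. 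Once the existence of a globally positive generating Hamiltonian on the closed manifold $M$ is recorded, the remaining work — in particular the contact-vector-field identity used in the displayed estimate — is identical to the corresponding step in Proposition~\ref{propinterval}. (Alternatively, Lemma~\ref{lemimportant} above may be replaced by Lemma~\ref{lemimportant2} applied to the Legendrian isotopy $\psi_{t}(L_{0})$.)
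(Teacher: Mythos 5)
Your proof is correct and follows exactly the route the paper intends: the paper's own proof of this corollary is nothing more than the remark that the argument of Proposition~\ref{propinterval} carries over, and you have carried it over faithfully (with $L_0,L_1$ in place of $\phi,\psi$, the Chekanov-type metric supplying the contactomorphism $g$ of small norm, and the same composition estimate). The one genuinely new ingredient --- replacing the merely non-negative ambient extension of \cite[Proposition 4.1]{Chernov16} by an ambient contact isotopy whose Hamiltonian is positive on all of $M$, which is indeed needed since the perturbed Legendrians are not confined to a neighbourhood of $\bigcup_t L_t$ --- is correctly identified and correctly justified via the standard fact that any Hamiltonian extending $\alpha(X_t^{l})$ off $L_t$ generates a contact isotopy preserving the family $L_t$.
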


\begin{proof}
The proof works analogous to the one of Proposition \ref{propinterval}.
\end{proof}

\begin{remark}
Using compactly supported contactomorphisms it is possible to define $d_{\alpha}$ for non closed contact manifolds.
The proofs of Theorem \ref{thmnd} and Corollary \ref{cor} should also work in this case.
However it is not clear if the metrics defined by different contact forms are equivalent or induce the same topology on $\mathcal{L}$.
\end{remark}

For Legendrian isotopy classes it is possible to prove a version of Conjecture \ref{con1}.
Thus any reasonable Lorentzian distance function is not invariant under the action of $\mathrm{Cont}_0(M)$.

\begin{prop}\label{propnonex}
Let $\abb{\tau}{\mathcal{L}\times\mathcal{L}}{[0,\infty]}$ be a map satisfying $\tau(L_0,L_1)>0$ iff $L_0\llcurly L_1$ and $\tau(L_0,L_{2})\geq \tau(L_0,L_1)+\tau(L_1,L_2)$ for $L_0\preccurlyeq L_1\preccurlyeq L_2$.
Assume that $\tau(\phi L_0,\phi L_1)=\tau(L_0,L_1)$ for any $\phi\in \mathrm{Cont}_0(M)$. 
Then 
\begin{align*}
&\tau(L_0,L_1)=\left\lbrace \begin{array}{cc}
\infty, &\text{if } L_0\llcurly L_1 \\ 
0, & \text{otherwise}
\end{array} \right..
\end{align*}
\end{prop}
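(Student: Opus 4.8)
The plan is to mimic the structure of the proof of Corollary~\ref{thm2}: show that if $\tau$ is conjugation-invariant under the action of $\mathrm{Cont}_0(M)$ and satisfies the triangle-type inequality, then the existence of \emph{any} positive relation $L_0 \llcurly L_1$ already forces $\tau(L_0,L_1) = \infty$. First I would observe that, by conjugation invariance, $\tau$ factors through the orbit structure of the $\mathrm{Cont}_0(M)$-action on pairs in $\mathcal{L}$, so that proving divergence for one convenient pair suffices. The key mechanism is the Reeb flow: since $\mathcal{L}$ need not be orderable, I cannot directly produce a positive loop, but I can still exploit the strict contactomorphism property $(\phi_s^\alpha)^*\alpha = \alpha$ together with Lemma~\ref{lemimportant2}.

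The main step is the following. Suppose $L_0 \llcurly L_1$, realised by a compactly supported positive path $\phi_t$ with $\phi_0 = \mathrm{id}_M$ and $\phi_1(L_0) = L_1$. For a positive integer $N$ I would concatenate $N$ copies of this path, re-parametrised into $[0,1]$, to get a positive isotopy from $L_0$ to $\phi_1^N(L_0)$ whose integral $\int_0^1 \min_{L_t}\alpha(X_t^l)\,dt$ is at least $N\epsilon_0$ for some fixed $\epsilon_0 > 0$; call the endpoint $L_1^{(N)} := \phi_1^N(L_0)$, so $L_0 \llcurly L_1^{(N)}$ with arbitrarily large "length." By the additivity hypothesis applied along the chain $L_0 \preccurlyeq \phi_1(L_0) \preccurlyeq \cdots \preccurlyeq \phi_1^N(L_0)$, we get
\begin{align*}
\tau(L_0, L_1^{(N)}) &\geq \sum_{k=0}^{N-1} \tau\bigl(\phi_1^k(L_0), \phi_1^{k+1}(L_0)\bigr) = N\,\tau(L_0, L_1)
\end{align*}
using conjugation invariance (by $\phi_1^k$) in the last equality. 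Since $\tau(L_0,L_1) > 0$ by hypothesis, letting $N \to \infty$ shows $\tau(L_0, L_1^{(N)}) \to \infty$; but one still must convert this into $\tau(L_0,L_1) = \infty$ rather than merely $\tau$ unbounded on the orbit. Here I would invoke conjugation invariance more carefully: the pairs $(L_0, L_1^{(N)})$ need not lie in the same orbit as $(L_0, L_1)$, so instead the argument should be run in the reverse direction. Namely, fix the pair $(L_0, L_1)$ with $L_0 \llcurly L_1$; I want to show that for each $c > 0$ there is a positive isotopy from $L_0$ to $L_1$ of length $> c$. Compose the given positive path with a long piece of the Reeb flow $\phi_s^\alpha$ and then a compactly supported correction path bringing the endpoint back to $L_1$: concretely, take $\psi_t := \phi_{c\,t}^\alpha \circ \eta_t \circ \phi_t$ where $\eta_t$ is a compactly supported path with $\eta_0 = \mathrm{id}$ and $\eta_1 = \phi_{-c}^\alpha$ near $L_1$, so that $\psi_1(L_0) = L_1$ while the Reeb contribution $+c$ (which survives the $\min$ by strictness of $\phi^\alpha_{ct}$) dominates the bounded loss from $\eta_t$. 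Provided $c$ is large enough relative to the $C^0$-norm of the generator of $\eta_t$, the path $\psi_t$ is positive, and by Lemma~\ref{lemimportant2} its length is $\geq c - O(1)$. Taking $c \to \infty$ gives $\tau(L_0, L_1) = \infty$.

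The main obstacle is the middle maneuver: producing, for arbitrarily large $c$, a \emph{positive} isotopy from $L_0$ back to $L_1$ whose length exceeds $c$. The subtlety is that inserting a long Reeb segment changes the endpoint, and the compactly supported correction path $\eta_t$ that restores it may itself be negative somewhere; one needs the Reeb shift to be applied \emph{simultaneously} (not sequentially), via the strict-contactomorphism identity $(\phi_{ct}^\alpha)^*\alpha = \alpha$, so that $\min_M \alpha(X_t^\psi) \geq c + \min_M\alpha(X_t^{\eta_t\phi_t})$, and the latter is bounded below independently of $c$. This is precisely the computation carried out in the proofs of Corollary~\ref{thmq18} and Corollary~\ref{thm2}, and the same bookkeeping with $\alpha_0 := \rho_{\min}\alpha$ (where $\eta_t^*\alpha = \rho_t\alpha$) handles the non-strictness of $\eta_t$. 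Everything else — additivity along Reeb chains, the reduction via conjugation invariance, and passing to the limit — is routine. Finally, the "otherwise" branch of the stated formula is immediate: if $L_0 \not\llcurly L_1$ then $\tau(L_0,L_1) = 0$ by the first hypothesis.
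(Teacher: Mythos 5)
There is a genuine gap, and it comes from conflating the abstract $\tau$ of the proposition with the concrete length functional $\tau_\alpha$. The proposition assumes only three axioms (positivity iff $\llcurly$, the reverse triangle inequality, and $\mathrm{Cont}_0(M)$-invariance); nothing ties $\tau$ to $\int_0^1\min_{L_t}\alpha(X_t^l)\,dt$. So your second manoeuvre --- producing, for every $c>0$, a positive isotopy from $L_0$ to $L_1$ of length $>c$ --- would say nothing about the abstract $\tau$ even if it succeeded. Worse, it cannot succeed: the correction path $\eta_t$ must undo a time-$c$ Reeb displacement in unit time, so its generator satisfies $\min_M\alpha(X_t^{\eta})\approx -c$, not $O(1)$; the claimed lower bound ``bounded below independently of $c$'' is exactly where the argument breaks, and the Reeb gain $+c$ is cancelled rather than dominant. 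Indeed, if such arbitrarily long positive isotopies between two fixed Legendrians always existed, one would get $\tau_\alpha(L_0,L_1)=\infty$ whenever $L_0\llcurly L_1$, contradicting the theorem in Section 6 that $\tau_\alpha$ is finite precisely when $\mathcal{L}$ is orderable. Your first manoeuvre (the chain giving $\tau(L_0,\phi_1^N(L_0))\geq N\,\tau(L_0,L_1)$) is correct but, as you yourself note, only yields unboundedness of $\tau$ along an orbit, not $\tau(L_0,L_1)=\infty$.

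The missing idea is a localization trick that exploits invariance under contactomorphisms which fix one Legendrian while advancing another. Take a short positive path $\phi_t$ starting at $L$ and set $L_0:=\phi_{t_0}(L)$, $L_1:=\phi_{t_1}(L)$ with $L$, $L_0$, $L_1$ pairwise disjoint. Cutting off the contact Hamiltonian of $\phi_t\phi_{t_0}^{-1}$ on a neighbourhood of $L$ that the moving Legendrian avoids produces $\psi\in\mathrm{Cont}_0(M)$ with $\psi(L)=L$ and $\psi(L_0)=L_1$. Invariance then gives $\tau(L,L_0)=\tau(\psi(L),\psi(L_0))=\tau(L,L_1)$, while the reverse triangle inequality gives $\tau(L,L_1)\geq\tau(L,L_0)+\tau(L_0,L_1)$ with $\tau(L_0,L_1)>0$; this is consistent only if $\tau(L,L_0)=\infty$, and the reverse triangle inequality then propagates $\infty$ to $\tau(L,L_2)$ for any $L_2$ with $L\llcurly L_2$. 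No Reeb-flow estimate and no appeal to Lemma~\ref{lemimportant2} is needed; the ``otherwise'' branch is, as you say, immediate from the first hypothesis.
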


\begin{proof}
Let $L$ be a closed Legendrian and assume there exists such a function $\tau$ on its Legendrian isotopy class $\mathcal{L}$.
Let $L_2\in \mathcal{L}$ with $L\llcurly L_2$. 
Choose a positive path $\phi_t$ with $\phi_0=L$ and $\phi_1=L_2$.
Then for $0<t_0<t_1<1$ small enough and $L_0:=\phi_{t_0}(L), L_1:=\phi_{t_1}(L)$ one can assume that $L_0 \cap L= L_1\cap L =L_0\cap L_1=\emptyset$.

\textbf{Claim:} There exists $\psi \in \mathrm{Cont}_0(M)$ with $\psi(L_0)=L_1$ and $\psi(L)=L$.

Take an open neighbourhood $U$ of $L$ such that $\phi_t(L)\cap U=\emptyset$ for all $t\in [t_0,t_1]$.
Let $V$ be an open neighbourhood of $L$ with $\overline{V}\subset U$.
Let $\abb{F}{[t_0,t_1]\times M}{\real{}}$ be a smooth function with $F(t,p)=\alpha(X_t^{\phi\phi_{t_0}^{-1}})(p)$ for $p\in M\setminus U$ and $F(t,p)=0$ for $p\in V$.
Define $\psi$ to be the flow of the time-dependent contact Hamiltonian vector field of $F$ at time $t_1$.
Then $\psi=id_M$ on $V$ and since for $t\in [t_0,t_1]$ we have $\phi_t(L)\cap U=\emptyset$, $\psi$ coincides with $\phi_{t_1}\phi_{t_0}^{-1}$ around $L_0$.

It follows
$$\tau(L,L_0)=\tau(\psi(L),\psi(L_0))=\tau(L,L_1)\geq \tau(L,L_0)+\tau(L_0,L_1).$$
Since $\tau(L_0,L_1)>0$ this implies $\tau(L,L_0)=\infty$.
Due to the reverse triangle inequality $\tau(L,\cdot)$ is strictly increasing along positive paths.
Hence $\tau(L,L_2)=\infty$.
\end{proof}

We define a Lorentzian distance function on $\mathcal{L}$ that is not $\mathrm{Cont}_0(M)$-invariant by 

\begin{definition}
Define
\begin{align*}
\tau_{\alpha}(L_0,L_1):=\left\lbrace \begin{array}{cc}
\sup\left\lbrace\integ{0}{1}{\min\limits_{L_t}\alpha(X_t^{l})}{t} \right\rbrace, & \text{if } L_0\preccurlyeq L_1 \\ 
0, & \text{otherwise}
\end{array} \right.
\end{align*}
Here the supremum is taken over all non-negative Legendrian isotopies $l_t$ with $l_0(L_0)=L_0$ and $l_1(L_0)=L_1$ such that $l_t=\phi_t|_{L_0}$ for some compactly supported non-negative path of contactomorphisms $\phi_t$.
\end{definition}

Using Lemma \ref{lemimportant2} one can analogously to the case of the Lorentzian distance on $\mathrm{Cont}_0(M)$ prove the following.

\begin{theorem}\label{thm12}
The map $\tau_\alpha$ satisfies
\begin{itemize}
\item[(i)]$\tau_{\alpha}(L_0,L_1)>0\Leftrightarrow L_0\llcurly L_1$.
\item[(ii)]$\tau_{\alpha}$ is continuous with respect to the interval topology.
\item[(iii)]$\tau_{\alpha}(L_0,L_2)\geq \tau_{\alpha}(L_0,L_1)+\tau_{\alpha}(L_1,L_2)$ for $L_0\preccurlyeq L_1\preccurlyeq L_2$.
\end{itemize}
If $M$ is closed and $\mathcal{L}$ orderable then 
\begin{itemize}
\item[(iv)]$\tau_{\alpha}$ is continuous with respect to the topology induced by $d_{\alpha}$.
\end{itemize}
\end{theorem}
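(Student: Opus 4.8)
The plan is to carry over the proofs of Theorem~\ref{thm1}(i)--(iii) and of Corollary~\ref{cor} essentially verbatim, systematically replacing the key Lemma~\ref{lemimportant} by its Legendrian counterpart Lemma~\ref{lemimportant2} and the quantity $\min\limits_M\alpha(\,\cdot\,)$ by $\min\limits_{L_t}\alpha(X_t^{l})$. The only recurring point that has no analogue in the $\mathrm{Cont}_0(M)$ case is bookkeeping: the supremum defining $\tau_{\alpha}$ runs over Legendrian isotopies extending to \emph{compactly supported} non-negative paths of contactomorphisms, so whenever an isotopy is reparametrised or pre-/post-composed with a Reeb shift $\phi_{\tau(t)}^{\alpha}$ we must check the outcome still lies in this class. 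Here I would invoke \cite[Proposition~4.1]{Chernov16}: every positive (non-negative) Legendrian isotopy of a closed Legendrian is, up to orientation-preserving reparametrisation, of the form $\phi_{f(t)}(L_0)$ for a compactly supported non-negative path $\phi_t$, and $\integ{0}{1}{\min\limits_{L_t}\alpha(X_t^{l})}{t}$ is invariant under such reparametrisations. Thus, although the path $\psi_t=\phi_{\tau(t)}^{\alpha}\phi_t$ produced inside Lemma~\ref{lemimportant2} need not be compactly supported when $M$ is open, the Legendrian isotopy it induces is, after reparametrisation, an admissible competitor; likewise, near the finitely many compact Legendrians appearing below one may replace the (possibly incomplete) Reeb flow by a compactly supported element of $\mathrm{Cont}_0(M)$ agreeing with it there, which suffices since $(\phi_t^{\alpha})^{\ast}\alpha=\alpha$ is only used along those Legendrians.

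For (i): if $L_0\llcurly L_1$ a positive isotopy has strictly positive integrand over the compact $L_t$, so $\tau_{\alpha}(L_0,L_1)>0$; conversely, if $\tau_{\alpha}(L_0,L_1)>0$ there is an admissible non-negative isotopy with $\integ{0}{1}{\min\limits_{L_t}\alpha(X_t^{l})}{t}=\epsilon>0$, and Lemma~\ref{lemimportant2} with $\delta=\frac{\epsilon}{2}$ produces one with $\min\limits_{L_t}\alpha(X_t^{\tilde l})>\frac{\epsilon}{2}>0$, i.e.\ a positive isotopy from $L_0$ to $L_1$. For (iii): given admissible non-negative isotopies realised by compactly supported non-negative paths $\phi_t,\tilde\phi_t$ with $\phi_0=\tilde\phi_0=id_M$, $\phi_1(L_0)=L_1$ and $\tilde\phi_1(L_1)=L_2$, choose smooth non-decreasing $\abb{\tau_1}{[0,1]}{[0,1]}$ and $\abb{\tau_2}{[0,1]}{[0,1]}$ with $\mathrm{supp}(\tau_1')\subset[0,\frac{1}{2}]$, $\mathrm{supp}(\tau_2')\subset[\frac{1}{2},1]$, $\tau_i(0)=0$ and $\tau_i(1)=1$; then $\hat\phi_t:=\tilde\phi_{\tau_2(t)}\circ\phi_{\tau_1(t)}$ is compactly supported and non-negative with $\hat\phi_0=id_M$, $\hat\phi_1(L_0)=L_2$, and a change of variables gives
$$\integ{0}{1}{\min\limits_{\hat\phi_t(L_0)}\alpha(X_t^{\hat\phi})}{t}=\integ{0}{1}{\min\limits_{\phi_t(L_0)}\alpha(X_t^{\phi})}{t}+\integ{0}{1}{\min\limits_{\tilde\phi_t(L_1)}\alpha(X_t^{\tilde\phi})}{t}.$$
Taking suprema over $\phi_t$ and over $\tilde\phi_t$ yields $\tau_{\alpha}(L_0,L_2)\geq\tau_{\alpha}(L_0,L_1)+\tau_{\alpha}(L_1,L_2)$, all terms being defined since $L_0\preccurlyeq L_1\preccurlyeq L_2$.

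For (ii) I would reproduce the two-claim structure of the proof of Theorem~\ref{thm1}(ii). Fix $(L_0,L_1)\in\tau_{\alpha}^{-1}((a,b))$ (with $(a,b)\subset[0,\infty]$ open, or $[0,b)$, or $(a,\infty]$). Claim~1 asserts there is $r>0$ with $\tau_{\alpha}(\phi_t^{\alpha}(L_0),\phi_s^{\alpha}(L_1))\in(a,b)$ for all $s,t\in[-r,r]$; the proof is the contradiction argument of the original, splitting into the cases $b<\infty$ and $a>0$ (with the same reduction covering $b=\infty$ or $a=0$), using Lemma~\ref{lemimportant2} to make the minima nearly constant together with a linear reparametrisation of the Reeb direction and $(\phi_{\tau(t)}^{\alpha})^{\ast}\alpha=\alpha$. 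Claim~2 asserts that
$$\bigl(I^+(\phi_{-r}^{\alpha}(L_0))\cap I^-(\phi_{r}^{\alpha}(L_0))\bigr)\times\bigl(I^+(\phi_{-r}^{\alpha}(L_1))\cap I^-(\phi_{r}^{\alpha}(L_1))\bigr)\subset\tau_{\alpha}^{-1}((a,b)),$$
again verbatim: one concatenates positive Legendrian isotopies through $\phi_{r}^{\alpha}(L_0)$ and $\phi_{-r}^{\alpha}(L_1)$, reparametrised so the middle segment realises $\tau_{\alpha}(\phi_{r}^{\alpha}(L_0),\phi_{-r}^{\alpha}(L_1))$ up to an arbitrarily small error, for the lower bound, and invokes Claim~1 for the upper bound. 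Since these product sets are open in the interval topology and their union is $\tau_{\alpha}^{-1}((a,b))$, continuity with respect to the interval topology follows.

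Finally, (iv) is immediate: by Corollary~\ref{cor}, when $M$ is closed and $\mathcal{L}$ is orderable the interval topology on $\mathcal{L}$ is coarser than the one induced by $d_{\alpha}$, hence the product interval topology on $\mathcal{L}\times\mathcal{L}$ is coarser than the product $d_{\alpha}$-topology, and a map continuous for a coarser topology is continuous for a finer one. I expect the real work to be in (ii): transferring the Claim~1/Claim~2 estimates without sign errors and making precise the replacement of the possibly incomplete Reeb flow by compactly supported contactomorphisms agreeing with it near the compact Legendrians involved. Everything else is a routine translation of the $\mathrm{Cont}_0(M)$ arguments.
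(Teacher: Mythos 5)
Your proposal is correct and follows exactly the route the paper intends: the paper gives no separate proof of Theorem~\ref{thm12} but states that it follows ``analogously'' from the arguments for Theorem~\ref{thm1} and Corollary~\ref{cor} with Lemma~\ref{lemimportant} replaced by Lemma~\ref{lemimportant2}, which is precisely what you carry out. Your extra care about keeping the competitors in the admissible class (compactly supported extensions, reparametrisation invariance via \cite[Proposition~4.1]{Chernov16}, and replacing the possibly incomplete Reeb flow near the compact Legendrians) addresses the only genuinely new points and is a welcome sharpening of the paper's terse treatment.
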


\begin{theorem}
The Legendrian isotopy class $\mathcal{L}$ is orderable if and only if $\tau_{\alpha}(L_0,L_1)<\infty$ for all $L_0,L_1\in \mathcal{L}$.
In this case for $t\geq 0$
$$\tau_{\alpha}(L,\phi_t^{\alpha}(L))=t.$$
\end{theorem}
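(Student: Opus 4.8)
The plan is to mirror almost verbatim the argument given for Corollary \ref{thm2}, replacing contactomorphisms by Legendrian isotopies and Lemma \ref{lemimportant} by Lemma \ref{lemimportant2}. The statement has two halves: the equivalence ``$\mathcal{L}$ orderable $\Leftrightarrow$ $\tau_\alpha$ finite everywhere'', and the formula $\tau_\alpha(L,\phi_t^\alpha(L))=t$ under orderability. I would treat the contrapositive of the forward direction first, then the reverse, then the formula.

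First, suppose $\mathcal{L}$ is not orderable. By \cite[Proposition 2.1.B]{Eliashberg} (the Legendrian analogue, available since a positive loop based at one Legendrian produces one through any isotopic Legendrian via \cite[Proposition 4.1]{Chernov16}) there is a positive loop $L_t$ of closed Legendrians, which by that same proposition is $l_t=\phi_t|_{L_0}$ for a compactly supported non-negative path $\phi_t$. Reparametrising and iterating this loop $k$ times gives non-negative isotopies from $L_0$ to itself with $\int_0^1 \min_{L_t}\alpha(X_t^l)\,dt$ as large as we like, hence $\tau_\alpha(L_0,L_0)=\infty$. For the reverse direction, assume $\mathcal{L}$ is orderable and suppose $\tau_\alpha(L_0,L_1)=\infty$ for some $L_0\preccurlyeq L_1$. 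Then for every $c>0$ there is a compactly supported non-negative path $\phi_t$ with $\phi_0|_{L_0}$ parametrising $L_0$, $\phi_1(L_0)=L_1$, and $\int_0^1\min_{L_t}\alpha(X_t^\phi)\,dt>c$; by Lemma \ref{lemimportant2} we may assume $\min_{L_t}\alpha(X_t^\phi)>c-\delta$ pointwise. Now I fix any compactly supported path $\psi_t$ of contactomorphisms from $\mathrm{id}_M$ to a contactomorphism taking $L_1$ back to $L_0$ (such a path exists since $L_0,L_1$ are Legendrian isotopic), write $\psi_t^\ast\alpha=\rho_t\alpha$ with $\rho_{\min}=\min_{[0,1]\times M}\rho_t>0$, and consider $\tilde\phi_t:=\psi_t\circ\phi_t$, which is a loop at $L_0$. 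Exactly as in the proof of Corollary \ref{thm2},
\begin{align*}
\min_{\tilde\phi_t(L_0)}\alpha(X_t^{\tilde\phi})&\geq \rho_{\min}\min_{L_t}\alpha(X_t^\phi)+\min_{L_t}\alpha(X_t^\psi)\\
&\geq \rho_{\min}(c-\delta)+\min_{[0,1]\times M}\alpha(X_t^\psi),
\end{align*}
which is positive once $c$ is chosen large enough (the second term is a fixed finite constant). Thus $\tilde\phi_t$ is a positive loop in $\mathcal{L}$, contradicting orderability.

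For the formula, assume $\mathcal{L}$ orderable and $t\geq 0$. The inequality $\tau_\alpha(L,\phi_t^\alpha(L))\geq t$ is immediate from the definition, since the Reeb isotopy $s\mapsto\phi^\alpha_{st}(L)$ is non-negative (indeed $\alpha(X^\cdot)\equiv t$) and realises the value $t$; it is generated by the compactly supported path obtained by cutting off the Reeb Hamiltonian away from the relevant Legendrians, or one restricts the global Reeb flow. For the reverse inequality suppose $\tau_\alpha(L,\phi_t^\alpha(L))>t$. Then there is a non-negative isotopy $l_s=\phi_s|_L$ from $L$ to $\phi_t^\alpha(L)$ with $\int_0^1\min_{L_s}\alpha(X_s^\phi)\,ds>t$, and by Lemma \ref{lemimportant2} we may take $\min_{L_s}\alpha(X_s^\phi)>t$ for all $s$. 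Then $\psi_s:=\phi_{-st}^\alpha\circ\phi_s$ is a loop based at $L$, and since $\phi_{-st}^\alpha$ is a strict contactomorphism, $\min_{\psi_s(L)}\alpha(X_s^\psi)\geq\min_{L_s}\alpha(X_s^\phi)-t>0$, so $\psi_s$ is a positive loop — again contradicting orderability. Hence $\tau_\alpha(L,\phi_t^\alpha(L))=t$.

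**Main obstacle.** The only genuinely delicate point is the bookkeeping around \emph{compact support}: the definition of $\tau_\alpha$ on $\mathcal{L}$ restricts to isotopies induced by compactly supported paths of contactomorphisms, so I must check that each construction above — iterating a loop, composing with $\psi_t$, composing with the Reeb flow $\phi^\alpha_{-st}$ — stays within that class. Iteration and composition of compactly supported paths is fine, and composing with the (not compactly supported) Reeb flow is harmless because Lemma \ref{lemimportant2} is stated precisely for isotopies of the form $\phi_t|_{L_0}$ with $\phi_t$ compactly supported and its conclusion produces such an isotopy again; alternatively, since $M$ is closed in the relevant direction (or one works near the compact Legendrians in question) the Reeb flow can be truncated to a compactly supported contactomorphism agreeing with $\phi^\alpha$ on a neighbourhood of the traced-out Legendrians, exactly as in the proof of Proposition \ref{propnonex}. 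I would state this truncation once and then invoke it silently. Everything else is a direct transcription of the $\mathrm{Cont}_0(M)$ arguments.
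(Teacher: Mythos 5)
Your proposal is correct and follows exactly the route the paper intends: the theorem is stated there without proof as the analogue of Corollary \ref{thm2}, to be proved ``analogously'' using Lemma \ref{lemimportant2}, which is precisely your transcription (positive loop from non-orderability and iteration; closing up a long positive path with a fixed ambient isotopy and a conformal-factor bound; composing with the inverse Reeb flow for the formula $\tau_{\alpha}(L,\phi_t^{\alpha}(L))=t$). Your extra care about compact support and truncating the Reeb Hamiltonian is a reasonable way to handle the only point the paper leaves implicit.
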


Since Lemma \ref{lemzap} is formulated in terms of Legendrians, one also has

\begin{theorem}
Assume that $(M,\xi)\cong (ST^{\ast}N,\xi_{st})$, where $N$ is smoothly covered by an open subset of $\real{n}$.
Then there exists a constant $C_{\alpha}$ depending on $\alpha$ such that for all $L_0,L_1$ isotopic to the fibres one has
$$\tau_{\alpha}(L_0,L_1)\leq C_{\alpha} d_{\alpha}(L_0,L_1).$$
\end{theorem}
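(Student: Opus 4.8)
The plan is to imitate the proof of Theorem~\ref{thm3} almost verbatim, the simplification being that Lemma~\ref{lemzap} already estimates the relevant integrals directly by a spectral invariant of a Legendrian submanifold, so no detour through $\mathrm{Cont}_0(M)$ is needed on the side of the estimate. As there, one lifts to a cover $ST^{\ast}\real{n}$ of $M\cong ST^{\ast}N$ and applies the hodograph transform to land in $J^1(S^{n-1})$ with its standard contact form $\tilde\alpha$; writing $\alpha=\rho\tilde\alpha$, the Legendrian Chekanov metrics (and the distance functions) for $\alpha$ and $\tilde\alpha$ differ only by the factor $\rho$, so it suffices to prove $\tau_{\tilde\alpha}(L_0,L_1)\le d_{\tilde\alpha}(L_0,L_1)$ and then set $C_\alpha=\max\rho/\min\rho$. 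Under the hypotheses $\mathcal{L}$ is orderable (cf.\ the introduction and \cite{Chernov16,Chernov19}), so $\tau_\alpha$ is finite and the inequality is not vacuous; one may also assume $L_0\preccurlyeq L_1$, as otherwise $\tau_\alpha(L_0,L_1)=0$.

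Next I would fix a non-negative Legendrian isotopy $l_t\colon L_0\to L_t$ from $L_0$ to $L_1$, coming from a compactly supported non-negative path of contactomorphisms $\phi_t$ with $\phi_0=id_M$ (\cite[Proposition 4.1]{Chernov16}), and a path $\psi_s$ of contactomorphisms with $\psi_0=id_M$, $\psi_1(L_0)=L_1$ realizing $d_\alpha(L_0,L_1)$ up to $\epsilon$. Both lift, starting at the identity, to $ST^{\ast}\real{n}$; fixing a lift of $L_0$, its images under the lifted isotopies become, after the hodograph transform, Legendrians of $J^1(S^{n-1})$ isotopic to the $0$-section, so their generating-function spectral invariants $l(\cdot,A)$ are defined. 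Applying Lemma~\ref{lemzap} to the lifted non-negative isotopy bounds $\int_0^1\min_{L_t}\tilde\alpha(X_t^l)\,dt$ from above by the spectral invariant of the endpoint Legendrian, while applying Lemma~\ref{lemzap} to the lifted path $\psi_s$ bounds that same spectral invariant from below by $\int_0^1\max_M|\tilde\alpha(X_s^\psi)|\,ds$; taking the supremum over non-negative isotopies on the left and the infimum over paths $\psi_s$ on the right and letting $\epsilon\to0$ then yields $\tau_{\tilde\alpha}(L_0,L_1)\le d_{\tilde\alpha}(L_0,L_1)$.

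The delicate point, and the only place where this genuinely differs from Theorem~\ref{thm3}, is the comparison of the spectral data attached to $L_0$ and to $L_1$: in the contactomorphism case right-invariance of $\tau_\alpha$ and $d_\alpha$ let one normalize $\phi_0=id_M$ and take $L$ to be the fibre over the origin itself, so the Legendrian being tracked literally started at the $0$-section, whereas here $L_0$ is an arbitrary Legendrian in the isotopy class of the fibres, and there is no action of $\mathrm{Cont}_0(M)$ on $\mathcal{L}$ under which $\tau_\alpha$ and $d_\alpha$ rescale by constants independent of $L_0$, so $L_0$ cannot simply be moved into a standard position. I expect the clean way around this is to replace the absolute spectral invariants $l(\cdot,A)$ by the two-Legendrian (relative) spectral invariants $c(L_0,L_1)$ of $L_1$ with respect to $L_0$, built from a difference of quadratic-at-infinity generating functions for $L_0$ and $L_1$ and hence insensitive to a common shift of both, which satisfy the relative analogue of Lemma~\ref{lemzap} directly for paths $\phi_t$ with $\phi_0=id_M$ and $\phi_1(L_0)=L_1$; one then reads off $\int_0^1\min_{L_t}\tilde\alpha(X_t^l)\,dt\le c(L_0,L_1)\le\int_0^1\max_M|\tilde\alpha(X_s^\psi)|\,ds$ with no error term, and the only remaining check is that $c(L_0,L_1)$ is independent of the chosen lift of $L_0$, which uses that the Legendrians are compact and that the covering $ST^{\ast}\real{n}\to M$ restricts to a diffeomorphism on each fibre.
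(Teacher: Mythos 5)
Your proposal is correct and follows essentially the same route as the paper, which gives no written argument for this statement beyond the remark that Lemma~\ref{lemzap} is formulated for Legendrians and hence the proof of Theorem~\ref{thm3} carries over. Your observation that $L_0$ is not the $0$-section, so that one must use the relative form of the spectral-invariant sandwich for isotopies starting at an arbitrary Legendrian in the class (equivalently the difference $l(L_1,A)-l(L_0,A)$, which is what \cite{Zapolsky} actually establishes), correctly supplies the one detail that the paper's statement of Lemma~\ref{lemzap} glosses over.
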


\section{A metric for globally hyperbolic spacetimes}

In \cite{Low} Low constructed the \textit{space of null geodesics} $\mathcal{N}_g$ of a Lorentzian manifold $(N,g)$.
He observed that for globally hyperbolic $(N,g)$ the space of null geodesics naturally carries the structure of a smooth contact manifold.
Moreover given a Cauchy hypersurface $\Sigma$ he constructed a contactomorphism $\rho_{\Sigma}$ from $\mathcal{N}_g$ to $ST^{\ast}\Sigma$ equipped with its standard contact structure.
Given a point $p\in N$ its \textit{sky} $S(p)$ is the set of null geodesics through the point $p$.
Denote by $S(N)$ the set of all skies.
The set $S(p)$ is always a Legendrian submanifold of $\mathcal{N}_g$ and is mapped by $\rho_N$ to the isotopy class of the fibres in $ST^{\ast}N$ (\cite{Low}).

Chernov and Nemirovski \cite[Proposition 4.5]{Chernov19} proved that if the isotopy class of the fibre is orderable, then the map $p\mapsto S(p)$ bijectively maps $N$ to $S(N)$ so that the natural orders on both sets coincide.
Hence the maps $\tau_{\alpha}$ and $d_{\alpha}$ restrict to $S(N)$ and induce a Lorentzian distance and a metric on $N$.

\begin{theorem}\label{thmglobhyp}
In the case described above the metric $d_{\alpha}^N:=d_{\alpha}|_{S(N)\times S(N)}$ induces the manifold topology on $N$.
\end{theorem}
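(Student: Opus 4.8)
The plan is to prove that $d_\alpha^N$ induces the manifold topology on $N$ by checking the two inclusions of topologies separately. Note first that $d_\alpha^N$ is genuinely a metric: by Theorem~\ref{thmnd} (the fibre isotopy class is orderable) $d_\alpha$ is non-degenerate on the isotopy class $\mathcal L$ of the fibre, hence on the subset $S(N)\cong N$. One inclusion will then be a formal consequence of Corollary~\ref{cor} together with the fact recalled in the introduction (see \cite{Minguzzi2}) that a globally hyperbolic --- hence strongly causal --- spacetime carries the manifold topology as its interval (Alexandrov) topology. The other inclusion is the continuity of the sky map into $(\mathcal L,d_\alpha)$, for which I would use that the skies depend smoothly on the point, together with the Legendrian isotopy extension theorem with quantitative control of the Hofer--Shelukhin norm.

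\textbf{Step 1: manifold-open subsets of $N$ are $d_\alpha^N$-open.} Write $\sigma\colon N\to\mathcal L$, $p\mapsto S(p)$, for the sky map. By \cite[Proposition~4.5]{Chernov19} the map $\sigma$ is a bijection onto $S(N)$ intertwining $\ll,\preccurlyeq$ on $N$ with $\llcurly,\preccurlyeq$ on $\mathcal L$, so it sends a chronological interval $I^+_N(p)\cap I^-_N(q)$ of $N$ to $\bigl(I^+(S(p))\cap I^-(S(q))\bigr)\cap S(N)$. By Corollary~\ref{cor} the sets $I^{\pm}(\cdot)\subset\mathcal L$ are $d_\alpha$-open, so these intervals are open in the topology $d_\alpha^N$ induces on $S(N)\cong N$; thus the Alexandrov topology of $N$ is coarser than the $d_\alpha^N$-topology. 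Since $(N,g)$ is globally hyperbolic it is strongly causal, so its Alexandrov topology coincides with the manifold topology, which proves Step~1. (If the Cauchy hypersurface is non-compact one uses here the compactly supported version of Corollary~\ref{cor} noted in the remark following it.)

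\textbf{Step 2: $d_\alpha^N$-balls are manifold-open.} By the triangle inequality it suffices to find, for each $p_0\in N$ and $\epsilon>0$, a manifold-neighbourhood $U$ of $p_0$ with $d_\alpha\bigl(S(p_0),S(q)\bigr)<\epsilon$ for all $q\in U$. As is implicit in Low's construction \cite{Low}, the skies form a smooth family of closed Legendrian submanifolds: on a chart neighbourhood $U$ of $p_0$ there is a smooth map $\Phi\colon U\times F\to\mathcal{N}_g$ (with $F$ a fixed closed manifold) such that $\Phi(p,\cdot)$ parametrises $S(p)$. Fix an auxiliary Riemannian metric on $N$. For $q\in U$ pick a path $\gamma_t$ from $p_0$ to $q$, staying in the chart, of length at most $2\dist(p_0,q)$; then $L_t:=\Phi(\gamma_t,F)$ is a Legendrian isotopy from $S(p_0)$ to $S(q)$. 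Using smoothness of $\Phi$, the Legendrian isotopy extension theorem (see e.g. \cite{Geiges}), and compactness of the family $\{\Phi(p,F):p\in\overline U\}$, one can realise $L_t$ by a compactly supported contact isotopy $\phi_t$ with $\phi_0=id_M$, $\phi_t(S(p_0))=L_t$, and $\max_M|\alpha(X_t^{\phi})|\le C\,|\dot\gamma_t|$ for a constant $C=C(\alpha,\Phi,\overline U)$ --- extending the velocity section $\alpha(X_t^{l})$ to an ambient contact Hamiltonian through a Legendrian neighbourhood normal form and a cut-off, which costs only a uniform constant. Then
\begin{align*}
d_\alpha\bigl(S(p_0),S(q)\bigr)\le|\phi_1|_\alpha&\le\integ{0}{1}{\max\limits_M|\alpha(X_t^{\phi})|}{t}\\
&\le C\integ{0}{1}{|\dot\gamma_t|}{t}\le 2C\,\dist(p_0,q),
\end{align*}
which tends to $0$ as $q\to p_0$; hence any sufficiently small chart-ball around $p_0$ serves as $U$, which proves Step~2 and with it the theorem.

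\textbf{Main obstacle.} Step~1 is essentially formal. The real work is in Step~2, and within it the quantitative Legendrian isotopy extension: one must extend the normal-velocity section of the moving sky $S(\gamma_t)$ to an ambient \emph{compactly supported} contact Hamiltonian whose $C^0$-norm is bounded by $|\dot\gamma_t|$ \emph{uniformly} as the base point ranges over a compact set. This is exactly where the smoothness of Low's sky map and a Legendrian neighbourhood normal form enter; granting those, the estimate above --- and hence the theorem --- follows.
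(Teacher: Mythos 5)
Your proposal is correct in substance and its first half coincides with the paper's: Step~1 is exactly the paper's argument (Corollary~\ref{cor} plus the order-preserving sky map of \cite[Proposition 4.5]{Chernov19}; the paper packages the passage from the interval topology on $S(N)$ to the manifold topology as a citation of \cite[Corollary 4.6]{Chernov19}, which is the strong-causality fact you invoke directly). In Step~2 you take a genuinely different route to the same Lipschitz-type estimate $d_{\alpha}(S(p_0),S(q))\lesssim \dist(p_0,q)$. The paper fixes a Bernal--S\'anchez splitting $N=\real{}\times\Sigma$, writes $S(t,q)=\phi_t(F_q)$ with $\phi_t=\rho_0\circ\rho_t^{-1}$ built from Low's contactomorphisms, moves along a \emph{causal} curve from $(-\delta,p)$ to $(s,q)$, and uses global hyperbolicity to trap the interval $I^+((-\delta,p))\cap I^-((\delta,p))$ inside $[-\delta,\delta]\times B_{\hat C\delta}(p)$, producing the explicit constant $(\tilde C\hat C+C)\delta$; you instead work in an arbitrary chart, use only the smoothness of the sky family $\Phi$ and an arbitrary short path $\gamma_t$, and appeal to a quantitative Legendrian isotopy extension. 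Your version is more economical (it does not need the splitting, the causal-curve detour, or the containment of Alexandrov intervals in product boxes) and makes explicit the technical point that the paper leaves implicit: the paper's inequality $d_{\alpha}(S((0,p)),S((\delta,p)))\leq\integ{0}{\delta}{\max_{\phi_t(F_p)}|\alpha(X_t^{\phi})|}{t}$ already requires cutting off the ambient contact Hamiltonian near the moving Legendrian without increasing its sup norm, i.e.\ precisely the extension lemma you single out as the main obstacle. That lemma is standard (the Hamiltonian must match $\alpha(X_t^l)$ and prescribed first-order data along $L_t$, so an extension supported in a width-$\e$ tubular neighbourhood has $C^0$-norm $\max_{L_t}|\alpha(X_t^l)|+O(\e)$, which after the cutoff gives your constant $C$ uniformly over the compact family), so your argument is complete modulo the same tacit input the paper uses; what the paper's route buys in exchange is an explicit constant tied to the causal geometry rather than to a chart.
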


\begin{proof}
Since the isotopy class of the fibres in $ST^{\ast}\Sigma$ is orderable Corollary \ref{cor} implies that the interval topology on $S(N)$ is open with respect to the topology induced by $d_{\alpha}$.
Thus \cite[Corollary 4.6]{Chernov19} implies that the manifold topology on $N$ is coarser than the topology induced by $d_{\alpha}^N$.

Due to the Bernal-S{\'a}nchez theorem (see e.g. \cite{Minguzzi2}) one can assume that $N=\real{}\times \Sigma$, where $\{t\}\times \Sigma$ is a Cauchy hypersurface for every $t$.
Let $(t_0,p)\in N$ and $\epsilon>0$.
W.l.o.g. assume $t_0=0$.
Since $(N,g)$ is strongly causal it suffices to show that there exists $\delta>0$ such that $d_{\alpha}(S((0,p)),S((t,q)))<\epsilon$ for any $(t,q)\in I^+((-\delta,p))\cap I^-((\delta,p))$.
Let $\abb{\rho_t}{\mathcal{N}_g}{ST^{\ast}(\{t\}\times \Sigma)}$ be the natural contactomorphism described in \cite{Low}.
Define $\phi_t:=\rho_0\circ\rho_t^{-1}$.
Using the natural identification one has $\phi_t\in \mathrm{Cont}_0(ST^{\ast}(\{0\}\times \Sigma))$.
Denote by $F_q$ the fibre over the point $q\in \Sigma$.
Then by definition $S(t,q)=\phi_t(F_q)$.
In particular for a curve of the form $(f(t),\gamma(t))$ one has $S(f(t),\gamma(t))=\phi_{f(t)}(F_{\gamma(t)})$.
Choose a parametrisation $\abb{l_t}{S^n}{F_{\gamma(t)}}$.
Then for $w\in S((f(t),\gamma(t))$  and $u\in S^{n}$ with $w=\phi_{f(t)}(l_{t}(u))$
\begin{align*}
\alpha_{w}\left(\frac{d}{dt}\phi_{f(t)}(l_t(u))\right)&=\alpha_{w}\left(d\phi_{f(t)}\left(\frac{d}{dt}l_t(u)\right)+f'(t) X^{\phi}_{f(t)}(w)\right)\\
&=(\phi_{f(t)}^{\ast}\alpha)_{l_{t}(u)}(\hat{\gamma}'(t))+f'(t)\alpha_w( X^{\phi}_{f(t)}(w)).
\end{align*}
Here $\hat{\gamma}'(t)$ denotes any vector $v\in T_{l_{t}(u)}ST^{\ast}\Sigma$ with $d\pi v=\gamma'(t)$.

Define $C:=\max\limits_{t\in [0,1]}\max\limits_{\phi_t(F_p)}|\alpha(X_t^{\phi})|$.
It follows that for $0<\delta<1 $
$$d_{\alpha}(S((0,p)),S((\delta,p)))\leq \integ{0}{\delta}{\max\limits_{\phi_t(F_p)}|\alpha(X_t^{\phi})|}{t}\leq C\delta$$
and 
$$d_{\alpha}(S((0,p)),S((-\delta,p)))\leq \integ{-\delta}{0}{\max\limits_{\phi_t(F_p)}|\alpha(X_t^{\phi})|}{t}\leq C\delta.$$
Let $(s,q)\in I^+((-\delta,p))\cap I^-((\delta,p))$.
Since the metrics induced by different contact forms are equivalent one can assume that $\alpha$ is induced by a Riemannian metric $h$, i.e. $\alpha_{[v]}(w)=v(d\pi(w))$, where $v\in [v]\in ST^{\ast}\Sigma$ with $h^{\ast}(v,v)=1$ and $\abb{\pi}{ST^{\ast}\Sigma}{\Sigma}$ denotes the projection.
There exist smooth positive functions $\rho_t$ with $\phi_t^{\ast}\alpha=\rho_t\alpha$.
Define $\tilde{C}:=\max\limits_{[0,1]\times ST^{\ast}\Sigma}\rho_t$.
Then for $0<\delta<1$ one has $\phi_t^{\ast}\alpha\leq \tilde{C}\alpha$ for all $t\in [-\delta,\delta]$.
Moreover since $N$ is globally hyperbolic one can choose $\hat{C}$ such that 
$$I^+((-\delta,p))\cap I^-((\delta,p))\subset [-\delta,\delta]\times B_{\hat{C}\delta}(p).$$
Here $B_{\hat{C}\delta}(p)$ denotes the ball of radius $\hat{C}\delta$ around $p$ with respect to $h$.
Let $(f(t),\gamma(t))$ be a causal curve from $(-\delta,p)$ to $(s,q)$.
Then 
\begin{align*}
d_{\alpha}(S((0,p)),S((s,q)))&\leq d_{\alpha}(S((0,p)),S((-\delta,p)))+d_{\alpha}(S((-\delta,p)),S((s,q)))\\
&\leq \integ{0}{1}{\max\limits_{\phi_t(F_p)}|\phi_t^{\ast}\alpha(\hat{\gamma}'(t))|}{t}+C\delta\leq (\tilde{C}\hat{C}+C)\delta.
\end{align*}
\end{proof}

Contrary to the Riemannian case there is no canonical way to associate a metric to a Lorentzian manifold.
So it is not surprising that our construction depends on the choice of a contact form $\alpha$ on $\mathcal{N}_g$.

\begin{qu}
Are there further relations of $d_{\alpha}^X$ and $\tau_{\alpha}|_{S(X)\times S(X)}$ to the causal structure of $N$ and to the Lorentzian metric $g$ for specific choices of the contact form $\alpha$ (see for instance the relation between certain contact forms and gravitational redshift described in \cite{Chernov18})?
\end{qu}

\begin{remark}
An 'intrinsic' way to define a metric and a Lorentzian distance function on $N$ is to only consider Legendrian isotopies that are contained in $S(N)$ in the definition of $d_{\alpha}$ and $\tau_{\alpha}$.
We will denote these functions by $\hat{d}_{\alpha}$ and $\hat{\tau}_{\alpha}$.
Then $d_{\alpha}\leq \hat{d}_{\alpha}$ and $\hat{\tau}_{\alpha}\leq \tau_{\alpha}$.
The proof of Theorem \ref{thmglobhyp} implies that $ \hat{d}_{\alpha}$ induces the manifold topology on $N$.
\end{remark}

\begin{exmp}
Consider the manifold $N=\real{}\times \Sigma$ with Lorentzian metric $g=-dt^2+k$, where $k$ is a complete Riemannian metric on $\Sigma$.
In this case $(N,g)$ is globally hyperbolic (\cite{Minguzzi2}).
Moreover assume that the isotopy class of the fibres in $ST^{\ast}\Sigma$ is orderable.
All null geodesics are up to parametrisation of the form $(t,\beta(t))$, where $\beta(t)$ is a geodesic of $k$ in $\Sigma$.
Thus the path $\phi_t=\rho_0\rho_t^{-1}$ used in the proof of Theorem \ref{thmglobhyp} is given by 
$$\phi_t=\phi_{t}^{\alpha}.$$
Here $-\alpha$ is the contact form induced by $k$ on $ST^{\ast}\Sigma$, i.e. $\phi_t$ is the inverse of the co-geodesic flow of $k$ and is positive with respect to the co-orientation defined by $\alpha$.
Denoting by $F_p$ the fibre over a point $p\in \Sigma$ we have $S(t,p)=\phi_t(F_p)$.
Let $(f(t),\gamma(t))$ be a curve in $N$.
Choose a parametrisation $\abb{l_t}{S^n}{F_{\gamma(t)}}$.
Then for $w\in S((f(t),\gamma(t))$  and $u\in S^{n}$ with $w=\phi_{f(t)}(l_{t}(u))$
\begin{align*}
\alpha_{w}\left(\frac{d}{dt}\phi_{f(t)}(l_t(u))\right)&=(\phi_{f(t)}^{\ast}\alpha)_{l_{t}(u)}(\hat{\gamma}'(t))+f'(t)\alpha_w( X^{\phi}_{f(t)}(w))\\
&=k(l_t(u),\gamma'(t))+f'(t).
\end{align*}
Hence
\begin{align*}
\max\limits_{v\in S(f(t),\gamma(t))}\alpha_{v}(X_t^l(v))=|\gamma'(t)|_k+f'(t)
\end{align*}
and
\begin{align*}
\min\limits_{v\in S(f(t),\gamma(t))}\alpha_{v}(X_t^l(v))=-|\gamma'(t)|_k+f'(t).
\end{align*}
It follows that $\hat{d}_{\alpha}$ is the metric induced by the continuous Finsler-metric $|\cdot|_k+|dt|$ and $\hat{\tau}_{\alpha}$ the Lorentzian distance function of the continuous Lorentz-Finsler metric (\cite{Javaloyes20}) $-|\cdot|_k+dt$.
\end{exmp}

\end{document}